\documentclass[11pt,reqno,english]{amsart}
\title[Long range asymptotics for the quadratic AB NLS]
{Long Range Asymptotics for the Quadratic\\
Aharonov--Bohm NLS}

\usepackage[T1]{fontenc}
\usepackage{lmodern,eucal}
\usepackage[a4paper,margin=1in]{geometry}
\usepackage{amssymb}

\usepackage[
    pdfauthor={Piero D'Ancona and Tohru Ozawa},
    colorlinks=true,
    linkcolor=magenta,
    citecolor=cyan,
    urlcolor=blue,
    hyperfootnotes=false
]{hyperref}

\hypersetup{
  pdftitle={
    Long Range Asymptotics for the Quadratic
    Aharonov--Bohm NLS
  },
  pdfsubject={
    Modified wave operators, second order asymptotics,
    and Friedrichs trace classification
  }
}

\allowdisplaybreaks[4]

\numberwithin{equation}{section}

\newtheorem{theorem}{Theorem}[section]
\newtheorem{corollary}[theorem]{Corollary}
\newtheorem{lemma}[theorem]{Lemma}
\newtheorem{proposition}[theorem]{Proposition}
\theoremstyle{remark}
\newtheorem{remark}[theorem]{Remark}

\theoremstyle{definition}
\newtheorem{definition}[theorem]{Definition}

\DeclareMathOperator{\dom}{dom}
\newcommand{\hyp}[1]{\texorpdfstring{$#1$}{}}
 
\date{August 2026}
\author{Piero D'Ancona}
\address{Piero D'Ancona: Sapienza Universit\`a di Roma,
Dipartimento di Matematica, Piazzale A.~Moro 2,
I-00185 Roma, Italy}
\email{dancona@mat.uniroma1.it}
\author{Tohru Ozawa}
\address{Tohru Ozawa: Waseda University,
Department of Applied Physics, School of Advanced Science and Engineering,
Tokyo, Japan}
\email{txozawa@waseda.jp}
\thanks{The first author is supported by the Sapienza
  University research projects 
  ``Wave dynamics in heterogeneous media''
  (DYNAWAVES), CUP B83C25000880005, and ``Bridging Analysis and
  Computation in Evolutionary PDEs'', 
  and by the Gruppo Nazionale per
  l'Analisi Matematica, la Probabilit\`a e le loro 
  Applicazioni (GNAMPA)
  of the Istituto Nazionale di Alta Matematica (INdAM).
  The second author is supported by JSPS KAKENHI Grant Number 
  24H00024 and JST Moonshot R \& D Grant number GMPJMS25A5.}

\subjclass[2020]{35Q55, 35P25, 81Q10}
\keywords{modified scattering, modified wave operator,
Aharonov--Bohm Hamiltonian, quadratic Schr\"odinger equation,
second order asymptotics, sharp nonlinear domain,
Hankel transform, Friedrichs domain}

\begin{document}

\begin{abstract}
  We study the long range behavior of solutions to $i\partial_tu=H_\alpha u+\lambda|u|u$ on $\mathbb R^2$, where $H_\alpha$ is the Friedrichs realization of the Aharonov--Bohm Hamiltonian with a single pole. The logarithmic phase of the long range ansatz may push a profile out of the domain of $H_{\alpha}$. We characterize profiles that stay in the operator domain by the vanishing of boundary traces at 0 of order $\le \frac 12$; at half flux $\alpha=1/2$, no nonzero trace survives.

  However, every profile in the full domain of $H_{\alpha}$ with small $L^\infty$ amplitude determines a unique global solution with a modified final state, with a remainder rate $t^{-b}$ for all $0<b<1/2+\nu_\alpha$, $\nu_\alpha=\min\{\alpha,1-\alpha\}$. For profiles satisfying the vanishing trace condition, the rate improves to every $0<b<1$. This result is sharp in the sense that, if $\alpha\neq \frac 12$, we can construct profiles with an error of size $t^{-1/2-\nu_\alpha}\log t$, ruling out all faster rates. The upper bound comes from a retarded Strichartz estimate for a residual that is not in $L^2$; the lower bound is an explicit calculation via Hankel transforms.

  For smoother profiles we also compute the first correction, which gives remainder rates with $1<b<2$.
\end{abstract}
\maketitle

\section{Introduction}\label{sec:introduction}

Consider the Aharonov--Bohm potential on 
$\mathbb{R}^{2}\setminus\{0\}$
\begin{equation}
  \label{eq:AB-potential}
  A_\alpha(x)
  =
  \alpha\frac{(-x_2,x_1)}{|x|^2},
  \qquad
  \alpha\in\mathbb R,
\end{equation}
and let $H_\alpha$ be the Friedrichs realization in
$L^2(\mathbb R^2)$ of
\begin{equation}
  \label{eq:AB-Hamiltonian}
  H_\alpha
  =
  \bigl(-i\nabla+A_\alpha\bigr)^2,
\end{equation}
with domain $\dom(H_\alpha)$.
If $\alpha\in\mathbb Z$, the Hamiltonian is gauge
equivalent to the free Laplacian, and after a gauge reduction,
we may assume $0<\alpha<1$ as we shall do in the following.

We are interested in the quadratic, gauge  invariant NLS
\begin{equation}
  \label{eq:main-NLS}
  i\partial_tu
  =
  H_\alpha u+\lambda |u|u,
  \qquad
  \lambda\in\mathbb R.
\end{equation}
For every $u_0\in L^2(\mathbb R^2)$, this equation has a
unique global solution in
$C(\mathbb R;L^2)\cap L^6_{\mathrm{loc}}(\mathbb R;L^3)$;
we briefly recall the well known proof in
Proposition~\ref{prop:global-L2-theory} below.

Since the nonlinearity is long range in two space dimensions,
a nonzero solution is not expected to approach a
fixed linear solution.  Instead, the correct final state
should contain a logarithmic phase.
For $\gamma\in\mathbb R$, we define the nonlinear phase modifier
\begin{equation}
  \label{eq:G-gamma}
  G_\gamma(z)
  =
  z e^{-i\gamma|z|},
  \qquad
  z\in \mathbb{C}.
\end{equation}
The long range ansatz corresponds to the choice
$\gamma=\lambda\log(t)/2$.  
Thus we define the logarithmically modified profile as
\begin{equation}
  \label{eq:intro-W}
  W(t)
  =
  G_{\lambda\log(t)/2}(\phi)
  =
  \phi
  \exp\left(
    -\frac{i\lambda}{2}|\phi|\log t
  \right).
\end{equation}
Note that for $\alpha\in(0,1)$, $G_\gamma$ does not preserve
the whole domain $\dom(H_\alpha)$ in general.  Recall that, for
a fixed radial cutoff
$\chi_{\mathrm{tr}}\in C_c^\infty([0,\infty))$ equal 
to one near zero,
every $\phi\in\dom(H_\alpha)$ has a unique decomposition
\begin{equation}
  \label{eq:full-trace-decomposition}
  \phi
  =
  \psi
  +
  c_0\chi_{\mathrm{tr}} r^\alpha
  +
  c_{-1}\chi_{\mathrm{tr}} r^{1-\alpha}e^{-i\theta},
  \qquad
  \psi\in\mathcal Y,
\end{equation}
where $x = (r \cos \theta, r \sin \theta)$ in $\mathbb{R}^{2}$ 
and $\mathcal Y$ is the minimal full angular domain defined in
\eqref{eq:pole-compatible-space}--\eqref{eq:pole-compatible-norm};
see Lemma~\ref{lem:full-minimal-domain} and
\cite{DG21,Fer24}.
A different choice of the cutoff changes $\psi$ but not the
Friedrichs trace coefficients $c_0(\phi)=c_0$ and
$c_{-1}(\phi)=c_{-1}$ in \eqref{eq:full-trace-decomposition},
which are uniquely determined.
We define a subspace
\begin{equation}
  \label{eq:full-trace-candidate}
  \mathcal D_\alpha^\sharp
  =
  \left\{
    \phi\in\dom(H_\alpha):
    c_m(\phi)=0
    \ \text{for every }m\in\{0,-1\}
    \ \text{with }|m+\alpha|\leq\frac12
  \right\}.
\end{equation}
Thus one trace is allowed in the definition of 
$\mathcal D_\alpha^\sharp$ if $\alpha\neq \frac 12$,
whereas if $\alpha=\frac 12$ both trace coefficients 
are required to vanish.
We endow $\mathcal D_\alpha^\sharp$ with the graph norm of
$H_{\alpha}$. In the following we write
\begin{equation}
  \label{eq:lowest-trace-order}
  \nu_\alpha
  =
  \min\{\alpha,1-\alpha\}.
\end{equation}

\subsection{Main results}

The first result identifies the exact subspace of 
$\dom(H_\alpha)$ preserved by the action of
the logarithmic nonlinear phase modifier.

\begin{theorem}[Domain of the modifier]
\label{the:sharp-full-domain}
  Let $0<\alpha<1$ and
  $\gamma\in\mathbb R\setminus\{0\}$.  For every
  $\phi\in\dom(H_\alpha)$,
  \begin{equation}
    \label{eq:sharp-full-domain-classification}
    G_\gamma(\phi)\in\dom(H_\alpha)
    \quad\Longleftrightarrow\quad
    \phi\in\mathcal D_\alpha^\sharp.
  \end{equation}
  On $\mathcal D_\alpha^\sharp$, the modifier $G_{\gamma}$ 
  preserves both trace coefficients.  
  For each fixed $\gamma$, $G_{\gamma}$ maps
  $\mathcal D_\alpha^\sharp$ continuously into itself for the
  graph topology, and at each fixed $\phi$ the map
  $\gamma\mapsto G_\gamma(\phi)$ is graph continuous.  Moreover,
  \begin{equation}
    \label{eq:full-modifier-graph-growth}
      \|G_\gamma(\phi)\|_{\mathrm{graph}(H_\alpha)}
      :=
      \|G_\gamma(\phi)\|_2
      +
      \|H_\alpha G_\gamma(\phi)\|_2
      \leq
      C_\phi(1+|\gamma|)^2.
  \end{equation}
\end{theorem}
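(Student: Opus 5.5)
We would work throughout with the trace decomposition \eqref{eq:full-trace-decomposition}. Write $\phi=\psi+S$, where $S=c_0\chi_{\mathrm{tr}}r^\alpha+c_{-1}\chi_{\mathrm{tr}}r^{1-\alpha}e^{-i\theta}$ and $\psi\in\mathcal Y$. Away from the pole, $G_\gamma$ is a Lipschitz map whose derivatives grow at most like $|\gamma|$. Since $H_\alpha$ is locally the Laplacian with a smooth potential there, the standard $H^2$ chain rule for $z e^{-i\gamma|z|}$ applies. Note that $|z|$ is Lipschitz but not smooth at $z=0$; still, $G_\gamma(z)=z\,e^{-i\gamma|z|}$ has bounded first derivatives of size $1+|\gamma|$, and second derivatives of size $|\gamma|+\gamma^2|z|$ that are homogeneous of degree $-1$ near $z=0$. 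From this we get the interior $H^2$ bound with constant $C_\phi(1+|\gamma|)^2$, using $\phi\in L^\infty\cap W^{1,4}$ away from $0$, which follows from $H^2_{\mathrm{loc}}$ in two dimensions. So the whole problem is local near $r=0$. There we expand
\[
G_\gamma(\phi)=\phi-i\gamma|\phi|\phi+\phi\,\bigl(e^{-i\gamma|\phi|}-1+i\gamma|\phi|\bigr),
\]
and the point is to decide when $|\phi|\phi$ and the remainder belong to $\dom(H_\alpha)$.

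\textbf{Step 1: the leading singular terms.} Near $0$ we have $|\phi|\sim|c_0|r^\alpha+|c_{-1}|r^{1-\alpha}+o(r^{\nu_\alpha})$, with the $\mathcal Y$ part contributing $o(r)$ in a suitable averaged sense via Lemma~\ref{lem:full-minimal-domain}. The worst new term is therefore $|c_m|c_m\,r^{2|m+\alpha|}e^{im\theta}$, where $m$ is the lowest nonvanishing trace. This term sits in angular sector $m$ with radial profile $r^{2\mu}$, $\mu=|m+\alpha|$. In that sector $H_\alpha$ acts as $-\partial_r^2-r^{-1}\partial_r+\mu^2r^{-2}$, which gives
\[
H_\alpha\bigl(r^{2\mu}\bigr)=(\mu^2-4\mu^2)\,r^{2\mu-2}=-3\mu^2r^{2\mu-2}.
\]
This lies in $L^2$ near $0$ if and only if $2(2\mu-2)+2>-2$, i.e.\ $\mu>1/2$. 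Because $-3\mu^2\neq0$, nothing cancels. For $\mu\le1/2$ the term is not in $\dom(H_\alpha)$: its Friedrichs expansion would need an $r^{-\mu}$ or non-Friedrichs component, and the $L^2$ norm of $H_\alpha$ fails. This gives the direction "$\Rightarrow$". The one delicacy is to ensure that higher-order terms in $\gamma$ and cross terms cannot cancel the offending $-i\gamma|c_m|c_m r^{2\mu}e^{im\theta}$. We would check this by isolating the exact power $r^{2\mu}$ in sector $m$: the cubic terms are $O(r^{3\mu})$, and the cross terms carry different angular modes or higher powers. Since $\gamma\ne0$ and $c_m\ne0$, the offending term survives. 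At $\alpha=1/2$ both sectors have $\mu=1/2$, so both traces must vanish.

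\textbf{Step 2: the direction "$\Leftarrow$".} For $\phi\in\mathcal D_\alpha^\sharp$, the only surviving trace $c_m$ has $\mu=|m+\alpha|>1/2$. Then $|\phi|\sim r^{\mu}$, and every term in the Taylor expansion of $G_\gamma(\phi)-\phi$ is $O(\gamma^k r^{(k+1)\mu})$. We would show that $G_\gamma(\phi)-\phi\in\mathcal Y$, with graph norm $\lesssim(1+|\gamma|)^2$. To do this, estimate $\Delta$ of $\phi\,(e^{-i\gamma|\phi|}-1)$ via the chain rule: second derivatives of $e^{-i\gamma|\phi|}$ are bounded by
\[
|\gamma|\,|\nabla^2|\phi||+\gamma^2|\nabla\phi|^2\lesssim(|\gamma|+\gamma^2)\,r^{\mu-2}\ \text{or better},
\]
multiplied by $\phi=O(r^\mu)$, giving $r^{2\mu-2}\in L^2$. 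The magnetic terms $A_\alpha\cdot\nabla$ and $|A_\alpha|^2=\alpha^2r^{-2}$ are handled the same way. The vanishing of the perturbation at order $r^{2\mu}$, with $2\mu>1$ larger than every trace exponent, shows it has zero trace coefficients. Hence $G_\gamma$ preserves $c_0$ and $c_{-1}$, and the result lies in $\mathcal D_\alpha^\sharp$.

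\textbf{Step 3: continuity.} Continuity in $\phi$ for fixed $\gamma$, and in $\gamma$ for fixed $\phi$, would follow from dominated convergence applied to the same pointwise bounds, after approximating in the graph norm. I expect the main obstacle to be exactly this step: controlling the $\mathcal Y$ component $\psi$ near the pole, since it is only in a weighted $H^2$-type space and not pointwise $o(r)$, and nonlinear maps are not graph-continuous in general. I would try to use the characterization of $\mathcal Y$ from \eqref{eq:pole-compatible-norm}, namely weighted bounds of the form $r^{-2}\psi,\ r^{-1}\nabla\psi,\ \nabla^2\psi\in L^2$, together with Hardy-type inequalities, to make all products square integrable with the stated $(1+|\gamma|)^2$ growth.
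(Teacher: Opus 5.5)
\textbf{Sufficiency and continuity.} Your Step 2 does not yet prove the direction $\Leftarrow$ or the bound $(1+|\gamma|)^2$. The obstacle you postpone to Step 3 is precisely the missing part.

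The bounds $|\phi|\sim r^\mu$ and $|\nabla^2|\phi||\lesssim r^{\mu-2}$ hold only for the pure trace. The $\mathcal Y$ component has no pointwise control of its derivatives, and at zeros of $\phi$ away from the pole $\nabla^2|\phi|$ is not even in $L^2$. Also, $D^2G_\gamma$ is bounded with a direction dependent limit at $0$; it is $D^3G_\gamma$ that is homogeneous of degree $-1$.

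The idea you need is to never differentiate $|\phi|$. Use the second order chain rule, in covariant form $H_\alpha G_\gamma(\phi)=DG_\gamma(\phi)H_\alpha\phi-\sum_jD^2G_\gamma(\phi)[\nabla_{\alpha,j}\phi,\nabla_{\alpha,j}\phi]$, with $|DG_\gamma(z)|\lesssim1+|\gamma||z|$ and $|D^2G_\gamma(z)|\lesssim|\gamma|+\gamma^2|z|$. Everything then reduces to $\nabla_\alpha\phi\in L^4$:
\begin{itemize}
\item for $\psi\in\mathcal Y$ this follows from $\|\nabla\psi\|_4^2\lesssim\|\psi\|_\infty\|D^2\psi\|_2$ and $\|\psi/r\|_4^2\le\|\psi\|_\infty\|\psi/r^2\|_2$;
\item for an allowed trace, $|\nabla_\alpha(\chi r^\mu e^{im\theta})|\lesssim r^{\mu-1}$ lies in $L^4$ near $0$ exactly when $\mu>1/2$. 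This is where the threshold enters on the sufficiency side.
\end{itemize}
You must then exclude a distribution supported at the pole. The paper checks $G_\gamma(\phi)\in\mathcal Q_\alpha$, extends the punctured form identity by density, and applies the representation theorem for closed forms.

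Your own route, proving $G_\gamma(\phi)-\phi\in\mathcal Y$ with flat derivatives, also works once the $L^4$ bound is available. Use $|\Phi(z)|\lesssim|\gamma||z|^2$ and $|D\Phi(z)|\lesssim|\gamma||z|$, and absorb cross terms such as $|\psi||D^2(\chi r^\mu e^{im\theta})|$ into $|\psi|/r^2$. Membership in the closure then follows by cutting off near the pole, and trace preservation is automatic.

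For continuity, plain dominated convergence fails because $D^2G_\gamma$ is discontinuous at $0$. You need $\phi_n\to\phi$ in $L^\infty$, $\nabla_\alpha\phi_n\to\nabla_\alpha\phi$ in $L^4$, and the fact that $\nabla\phi=0$ a.e.\ on $\{\phi=0\}$.

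\textbf{Necessity.} The computation $H_\alpha(r^{2\mu})=-3\mu^2r^{2\mu-2}$ is the right heuristic, but it cannot be used directly, because $H_\alpha$ of the remainder is not controlled. You gesture at the fix; it has to be run on asymptotics. The channel $m$ projection of any element of $\dom(H_\alpha)$ is $c'\chi r^\mu$ plus a minimal part that is $o(r)$. Compare this with $p(r)=-i\gamma|c|c\,r^{2\mu}+o(r^{2\mu})$, where the remainders are controlled in $L^1(\mathbb S^1)$ via \eqref{eq:minimal-angular-boundary-decay} and $\|\phi(r,\cdot)\|_{L^\infty(\mathbb S^1)}=o(1)$. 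This forces $c'=0$, then $p=o(r)$, hence $c=0$ since $2\mu\le1$.

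More seriously, your half flux case is wrong as stated. If $\alpha=1/2$ and $c_0c_{-1}\neq0$, then $|\phi|\approx r^{1/2}|c_0+c_{-1}e^{-i\theta}|$, not $r^{1/2}(|c_0|+|c_{-1}|)$. The new term is $-i\gamma r|A|A$ with $A=c_0+c_{-1}e^{-i\theta}$, which spreads over all angular modes. The cross terms have the same order $r$ in modes $0$ and $-1$, so isolating one sector does not exclude cancellation.

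The paper works with the whole angular profile instead. Since $h=G_\gamma(\phi)-\phi\in\dom(H_{1/2})$ is $O(r)$ in $L^2(\mathbb S^1)$, its traces vanish, so $h\in\mathcal Y$ and $r^{-1}h\to0$ in $L^2(\mathbb S^1)$. This contradicts $r^{-1}h\to-i\gamma|A|A$ unless $A=0$. Within your mode by mode framework, the identity $\bar c_0(|A|A)_0+\bar c_{-1}(|A|A)_{-1}=\frac1{2\pi}\int_0^{2\pi}|A|^3\,d\theta$ would also close the gap.
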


The theorem shows that $G_{\gamma}$ preserves
a Friedrichs trace in channel $m$ exactly when $|m+\alpha|>1/2$; 
at and below the threshold $|m+\alpha|=1/2$, 
that trace must vanish for $G_{\gamma}(\phi)$ to remain in the
domain of the operator.
Graph continuity is optimal in the sense that,
by Proposition~\ref{prop:graph-Lipschitz-failure},
local Lipschitz continuity fails near moving simple zeros.

Nevertheless, profiles belonging to
$\dom(H_\alpha)\setminus\mathcal D_\alpha^\sharp$
are not excluded from the final state construction,
as we prove next. For a forbidden trace, the
quantity $H_\alpha W(t)$ may fail to belong to $L^2$, but its
singular part belongs to an $L^p$ space in the range used by
the retarded Strichartz estimate, and this is enough
to include forbidden traces in the construction,
although with a worse decay rate for the remainder.

We show the construction.
For $\phi\in L^2(\mathbb R^2)$ and $t>0$, define
$u_{\mathrm{ap}}(t,x)$ by
\begin{equation}
  \label{eq:all-channel-uap}
  u_{\mathrm{ap}}(t,x)
  =
  \frac1{2it}
  e^{i|x|^2/(4t)}
  \phi\left(\frac{x}{2t}\right)
  \exp\left[
    -\frac{i\lambda}{2}
    \left|
      \phi\left(\frac{x}{2t}\right)
    \right|
    \log t
  \right].
\end{equation}
To express the same asymptotic in distorted Fourier variables,
let $\mathcal F_\alpha$ denote the unitary transform defined in
\eqref{eq:distorted-transform} below.  Set
\begin{equation}
  \label{eq:u-plus-definition}
  u_+
  =
  \mathcal F_\alpha^{-1}\phi
\end{equation}
and define
\begin{equation}
  \label{eq:distorted-nonlinear-modifier}
  \mathcal N_\alpha(t)u_+
  =
  \mathcal F_\alpha^{-1}
  \biggl[
    \exp\left(
      -\frac{i\lambda}{2}
      |\mathcal F_\alpha u_+|\log t
    \right)
    \mathcal F_\alpha u_+
  \biggr].
\end{equation}
This is the nonlinear modifier which appears in the modified
scattering property \eqref{eq:main-modified-wave-asymptotic}
below, which relates the linear flow with the nonlinearly
perturbed equation \eqref{eq:main-NLS}.

For $1/2<b<1$, define the profile class
\begin{equation}
  \label{eq:rate-dependent-profile-class}
  \mathcal P_{\alpha,b}
  =
  \begin{cases}
    \dom(H_\alpha),
    &
    \frac12<b<\frac12+\nu_\alpha,
    \\ 
    \mathcal D_\alpha^\sharp,
    &
    \frac12+\nu_\alpha\leq b<1.
  \end{cases}
\end{equation}
The next result proves the existence of the modified wave operator
for \eqref{eq:main-NLS}:

\begin{theorem}[Modified final states]
\label{the:main-final-state}
  Let $0<\alpha<1$, $1/2<b<1$, and
  $\lambda\in\mathbb R$.  There is
  $\varepsilon_{b,\alpha,\lambda}>0$ such that for every
  $\phi\in\mathcal P_{\alpha,b}$ satisfying
  \begin{equation}
    \label{eq:main-profile-smallness}
    \|\phi\|_\infty
    <
    \varepsilon_{b,\alpha,\lambda}
  \end{equation}
  there exists a unique global solution
  $u(t)$ of \eqref{eq:main-NLS} satisfying
  \begin{equation}
    \label{eq:main-final-state-bound}
    \sup_{\tau\geq T}
    \tau^b
    \bigl(
      \|u-u_{\mathrm{ap}}\|_{
        L^\infty([\tau,\infty);L^2)}
      +
      \|u-u_{\mathrm{ap}}\|_{
        L^4([\tau,\infty);L^4)}
    \bigr)
    <
    \infty
  \end{equation}
  for some $T\geq1$.  In addition,
  \begin{equation}
    \label{eq:main-modified-wave-asymptotic}
    \left\|
      u(t)
      -
      e^{-itH_\alpha}
      \mathcal N_\alpha(t)u_+
    \right\|_2
    \longrightarrow0
    \qquad
    (t\to\infty).
  \end{equation}
  Thus, on the class
  \begin{equation}
    \label{eq:wave-operator-domain}
    \mathcal U_{\alpha,b,\lambda}
    =
    \left\{
      u_+\in L^2:
      \mathcal F_\alpha u_+
      \in\mathcal P_{\alpha,b},
      \|\mathcal F_\alpha u_+\|_\infty
      <
      \varepsilon_{b,\alpha,\lambda}
    \right\},
  \end{equation}
  the modified nonlinear wave operator
  \begin{equation}
    \label{eq:wave-operator-map}
    \Omega_{\alpha,\lambda}^{\mathrm{mod}}:
    u_+
    \longmapsto
    u(0)
  \end{equation}
  is well defined and injective.
\end{theorem}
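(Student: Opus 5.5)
We construct $u$ by solving the equation backward from $t=\infty$ around the approximate solution $u_{\mathrm{ap}}$. Write $u=u_{\mathrm{ap}}+w$. The remainder then satisfies
\[
i\partial_t w=H_\alpha w+\lambda\bigl(|u_{\mathrm{ap}}+w|(u_{\mathrm{ap}}+w)-|u_{\mathrm{ap}}|u_{\mathrm{ap}}\bigr)-R,
\qquad
R:=i\partial_tu_{\mathrm{ap}}-H_\alpha u_{\mathrm{ap}}-\lambda|u_{\mathrm{ap}}|u_{\mathrm{ap}} .
\]
The first step is to compute $R$ through the factorization $e^{-itH_\alpha}=M(t)D(t)\mathcal F_\alpha M(t)$. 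Here $M(t)$ is multiplication by $e^{i|x|^2/4t}$, $D(t)$ is the dilation, and the distorted transform $\mathcal F_\alpha$ commutes with dilations and intertwines $H_\alpha$ with multiplication by $|\xi|^2$. A direct calculation gives
\[
u_{\mathrm{ap}}(t)=MD\,W(t),
\]
with $W$ as in \eqref{eq:intro-W}. The phase $\lambda\log t/2$ is chosen exactly so that the resonant term $\lambda|u_{\mathrm{ap}}|u_{\mathrm{ap}}=(\lambda/2t)MD(|W|W)$ cancels against $MD\,i\partial_tW$. What remains is
\[
R=-\frac{1}{4t^2}\,MD\,H_\alpha W(t)
\]
(up to constants and sign), that is, the commutator term coming from replacing $M(t)$ by $1$. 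It is convenient to rewrite this as
\[
R=e^{-itH_\alpha}\bigl(M(t)-1\bigr)\mathcal F_\alpha^{-1}\cdots ,
\]
so that $R$ is identified with $t^{-2}MD H_\alpha G_{\lambda\log t/2}(\phi)$.

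The estimates on $R$ then split according to the profile class.
\begin{itemize}
\item If $\phi\in\mathcal D_\alpha^\sharp$, Theorem~\ref{the:sharp-full-domain} gives $\|H_\alpha W(t)\|_2\le C(1+\log t)^2$. Since $D(t)$ is $L^2$-unitary,
\[
\|R(t)\|_2\lesssim t^{-2}(\log t)^2\cdot t\,?
\]
The $L^2$ norm picks up a factor $t$ from the dilation scaling of $t^{-2}$ against $(4t^2)^{-1}$. In either case the Duhamel integral $\int_t^\infty\|R\|_2$ is $O(t^{-1}\log^2t)$, which beats $t^{-b}$ for every $b<1$.
\item If $\phi\in\dom(H_\alpha)$ carries a forbidden trace, write $\phi=\phi^\sharp+c\chi_{\mathrm{tr}}r^{\nu}e^{im\theta}$ using \eqref{eq:full-trace-decomposition}. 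The regular part is handled as above. For the singular part, $G_\gamma$ applied near $r=0$ produces terms like $\gamma^k r^{\nu-2}|\cdot|^{\nu}$, i.e.\ $H_\alpha W\sim(\log t)^2 r^{2\nu-2}$ near the pole. This lies in $L^p$ exactly for $p<1/(1-\nu)$, and not in $L^2$ when $\nu\le 1/2$.
\end{itemize}
For that singular part I would use the dual retarded Strichartz estimate for $H_\alpha$ (available from the Aharonov--Bohm Strichartz literature) with an admissible dual pair $(\tilde q',\tilde p')$, $\tilde p'<2$. Tracking the dilation gives
\[
\|D(t)f\|_{L^{p}}=t^{2/p-1}\|f\|_{L^{p}} .
\]
Choosing $p$ arbitrarily close to $1/(1-\nu)$, the retarded integral of $R$ over $[\tau,\infty)$ in $L^\infty L^2\cap L^4L^4$ should be $O(\tau^{-1/2-\nu+\epsilon})$. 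This yields every $b<1/2+\nu_\alpha$. The channel with larger $|m+\alpha|>1/2$ stays in $L^2$ and is harmless. This step is the main obstacle: the admissibility exponents have to be matched precisely with the borderline integrability of $r^{2\nu-2}$ in order to reach the sharp threshold.

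Next comes the contraction for $w$ in the space
\[
X_T=\Bigl\{w:\ \sup_{\tau\ge T}\tau^b\bigl(\|w\|_{L^\infty([\tau,\infty);L^2)}+\|w\|_{L^4([\tau,\infty);L^4)}\bigr)\le 1\Bigr\},
\]
with $w$ given by the Duhamel formula from $+\infty$. The nonlinear difference is bounded pointwise by
\[
|u_{\mathrm{ap}}||w|+|w|^2,\qquad \|u_{\mathrm{ap}}(t)\|_\infty\le\|\phi\|_\infty/(2t).
\]
For the linear term, Strichartz gives $\int_\tau^\infty t^{-1}\|\phi\|_\infty\|w\|_2\,dt\lesssim\varepsilon\,\tau^{-b}/b$, after a dyadic decomposition. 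Here the smallness of $\|\phi\|_\infty$ is essential, because the $1/t$ decay is only borderline integrable; this is where $\varepsilon_{b,\alpha,\lambda}$ enters. The quadratic term is estimated in $L^{4/3}L^{4/3}$ by $\|w\|_{L^4L^4}\|w\|_{L^{2}L^{2}}$, or by Hölder in time, giving $\tau^{-2b+1/2}\ll\tau^{-b}$ since $b>1/2$. Taking $T$ large makes the map a contraction, so we obtain a unique $w$ and hence a solution on $[T,\infty)$. It extends globally by Proposition~\ref{prop:global-L2-theory}, and uniqueness within the class \eqref{eq:main-final-state-bound} follows from the same difference estimate.

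Finally, for \eqref{eq:main-modified-wave-asymptotic}, compute
\[
e^{-itH_\alpha}\mathcal N_\alpha(t)u_+=MD\mathcal F_\alpha M\mathcal F_\alpha^{-1}W(t),
\]
so that
\[
\|u_{\mathrm{ap}}-e^{-itH_\alpha}\mathcal N_\alpha u_+\|_2=\|(M(t)-1)\mathcal F_\alpha^{-1}W(t)\|_2 .
\]
This tends to $0$ because $W(t)\to$ a compact family in $L^2$ modulo phase; alternatively, use $|M-1|\le|x|^2/4t$ together with an $H_\alpha^{1/2}$-type bound $\|\,|x|\mathcal F_\alpha^{-1}W\|_2\lesssim\log t$, giving $O(t^{-1/2}\log t)$, and approximate by density if needed. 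Combined with $\|w(t)\|_2\to0$, this proves the convergence. Injectivity follows because $u$ determines $u_{\mathrm{ap}}$ asymptotically, hence $|\phi|$ and then $\phi$, through $t|u(t,2tx)|\to|\phi(x)|/2$ in $L^2$. Two final states therefore yielding the same $u(0)$ have the same $\phi$, and hence the same $u_+$.
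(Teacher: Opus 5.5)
Your architecture matches the paper's. You have the residual $R=-\frac1{4t^2}M(t)D(t)H_\alpha W(t)$. You have an $L^1L^2$ tail $O(\tau^{-1}(1+\log\tau)^2)$ on $\mathcal D_\alpha^\sharp$. For forbidden traces you have an $L^2+L^p$ residual with $\|R_p(t)\|_p\lesssim t^{-3+2/p}(1+\log t)^2$, fed into the dual pair $L^{q_p'}L^p$ to give $\tau^{-(3/2-1/p)}$. The weighted contraction, the bound $|M(t)-1|\lesssim t^{-1/2}|x|$ with $\||x|\mathcal F_\alpha^{-1}W\|_2=\|H_\alpha^{1/2}W\|_2\lesssim1+\log t$, and injectivity through $|\phi|$ and then $\phi$ are also the paper's. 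Estimating $|w|^2$ in $L^{4/3}L^{4/3}$ by $\|w\|_{L^4L^4}\|w\|_{L^2L^2}$ is a harmless variant with the same $\tau^{1/2-2b}$.

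The exponent matching you flag as the main obstacle is the easy part, and you get it right. The real gap is that in the forbidden branch the residual identity is never justified. There $W(t)=G_\gamma(\phi)\notin\dom(H_\alpha)$, which is the content of the domain theorem, so $R=-\frac1{4t^2}MDH_\alpha W$ is not available from either route you indicate. The pseudoconformal identity needs $W\in C(I;\dom(H_\alpha))$. The factorization computation actually gives $R=-\frac1{4t^2}e^{-itH_\alpha}M(t)^{-1}|x|^2\mathcal F_\alpha^{-1}W(t)$; the factor $M(t)-1$ belongs to $u_{\mathrm{ap}}-e^{-itH_\alpha}\mathcal N_\alpha(t)u_+$, not to $R$. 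That computation needs $|x|^2\mathcal F_\alpha^{-1}W(t)\in L^2$, which is the same domain condition.

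So you must do three things. First, define $H_\alpha W(t)$ as a distribution on all of $\mathbb R^2$, e.g.\ via the extended pairing $\int\nabla_\alpha v\cdot\overline{\nabla_\alpha\zeta}\,dx$, which requires $\nabla_\alpha W\in L^{2p}_{\mathrm{loc}}$. Second, show that no distribution supported at the pole is produced; the paper uses cutoffs $\chi_\varepsilon$ with $\|\nabla_\alpha(\chi_\varepsilon\zeta-\zeta)\|_{(2p)'}=O(\varepsilon^{1-1/p})$, which is exactly where $p>1$ is used. Third, prove a weak pseudoconformal identity. Without these steps your Duhamel problem for $w$, whose source is not in $L^2$, is not connected to \eqref{eq:main-NLS}.

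Your way of producing the $L^p$ piece also fails as written. $G_\gamma$ is nonlinear, so splitting $\phi=\phi^\sharp+c\chi r^\nu e^{im\theta}$ and treating the regular part ``as above'' leaves the interaction between the two pieces uncontrolled. Moreover, at $\alpha=1/2$ both traces are forbidden and are coupled through $|\phi|$. The tool that handles everything at once is the covariant chain rule $H_\alpha G_\gamma(\phi)=DG_\gamma(\phi)H_\alpha\phi-\sum_jD^2G_\gamma(\phi)[\nabla_{\alpha,j}\phi,\nabla_{\alpha,j}\phi]$ on the punctured plane. It comes with $|DG_\gamma|\lesssim1+|\gamma|\|\phi\|_\infty$, $|D^2G_\gamma|\lesssim|\gamma|+\gamma^2\|\phi\|_\infty$, and $\nabla_\alpha\phi\in L^{2p}_{\mathrm{loc}}$ for $p<(1-\nu_\alpha)^{-1}$: the traces contribute $O(r^{\nu_\alpha-1})$ and the $\mathcal Y$ part lies in $L^4$. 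Cutting off near the pole then gives $F_p\in L^p$ and $F_2\in L^2$ with $(1+|\gamma|)^2$ growth.

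Two smaller errors. First, there is no extra factor $t$ in $\|R(t)\|_2$: $M(t)D(t)$ is unitary, so $\|R(t)\|_2=\frac1{4t^2}\|H_\alpha W(t)\|_2$. With such a factor $\int_t^\infty\|R\|_2$ would diverge, so ``in either case'' is wrong, even though the bound you then use is correct. Second, $\{W(t)\}$ is not precompact in $L^2$ in general, since $\phi e^{-is|\phi|}$ typically tends weakly to $0$. Your second route, $|M(t)-1|\le\min\{2,|x|^2/(4t)\}\lesssim t^{-1/2}|x|$, is the paper's and suffices.
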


Note that only the $L^\infty$ amplitude of the profiles must be 
small; the graph norm may be large but its size
affects only the starting time $T$ and the
constants.  The profiles may meet the pole and may contain
arbitrary angular modes.  If $b<1/2+\nu_\alpha$, every
Friedrichs trace is allowed.  When $\alpha=\frac 12$, 
this covers every
$1/2<b<1$, even though Theorem~\ref{the:sharp-full-domain}
says that the pointwise modifier preserves no nonzero
exceptional trace.  If $\alpha\neq \frac 12$, restricting to
$\mathcal D_\alpha^\sharp$ restores the full range $b<1$;
in this case,
$\mathcal D_\alpha^\sharp$ is strictly larger than
$\mathcal Y$ (see \eqref{eq:pole-compatible-space}) and
contains $\chi r^\alpha$ when $\alpha>1/2$
and $\chi r^{1-\alpha}e^{-i\theta}$ when $\alpha<1/2$.

If $\alpha\neq1/2$, the dichotomy
\eqref{eq:rate-dependent-profile-class} is sharp for the
present ansatz, as the following explicit construction shows.

\begin{theorem}[Sharpness]
\label{the:pure-trace-rate-sharpness}
  Let $0<\alpha<1$, $\alpha\neq1/2$, 
  $\nu=\nu_\alpha$, and let
  $m_\nu=0$ if $0<\alpha<1/2$ and
  $m_\nu=-1$ if $1/2<\alpha<1$, so that
  $|m_\nu+\alpha|=\nu_\alpha$.

  Choose a radial cutoff
  $\chi\in C_c^\infty([0,\infty))$ with $0\leq\chi\leq1$ which
  is one near zero, and let
  \begin{equation}
    \label{eq:pure-trace-profile}
    \phi(r,\theta)
    =
    c\chi(r)r^\nu e^{im_\nu\theta},
    \qquad
    \lambda c\neq0,
  \end{equation}
  so that $\phi\in \dom(H_\alpha)\setminus\mathcal D_\alpha^\sharp$.
  Set $b_0=1/2+\nu/2$.  Assume
  $\|\phi\|_\infty<\varepsilon_{b_0,\alpha,\lambda}$, and let
  $u$ be the corresponding final state solution in
  Theorem~\ref{the:main-final-state}.  Then
  \eqref{eq:main-final-state-bound} fails for every
  \begin{equation}
    \label{eq:pure-trace-forbidden-rate}
    b
    \geq
    \frac12+\nu
  \end{equation}
  and every $T\geq1$.
  More generally, no solution of \eqref{eq:main-NLS} satisfies
  \eqref{eq:main-final-state-bound} for this profile and any
  $b\geq1/2+\nu$.
\end{theorem}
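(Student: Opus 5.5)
The plan is to compare the true solution $u$ with $u_{\mathrm{ap}}$ through the Duhamel formula and to isolate the explicit residual produced by the forbidden trace. Fix $b_0=1/2+\nu/2<1/2+\nu_\alpha$. Since $\phi\in\dom(H_\alpha)$ and $\|\phi\|_\infty<\varepsilon_{b_0,\alpha,\lambda}$, Theorem~\ref{the:main-final-state} gives a solution $u$ with $w=u-u_{\mathrm{ap}}$ of size $O(\tau^{-b_0})$ in $L^\infty L^2\cap L^4L^4$ on $[\tau,\infty)$. Suppose, for contradiction, that some solution satisfies \eqref{eq:main-final-state-bound} with $b\ge 1/2+\nu$. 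It then also satisfies the bound with $b_0$, so by the uniqueness part of Theorem~\ref{the:main-final-state} it coincides with $u$. This reduces the ``more generally'' clause to the first claim.

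Write $R=i\partial_tu_{\mathrm{ap}}-H_\alpha u_{\mathrm{ap}}-\lambda|u_{\mathrm{ap}}|u_{\mathrm{ap}}$ for the residual. Using the factorization $e^{-itH_\alpha}=M\mathcal D\mathcal F_\alpha M$, the ansatz can be written as $u_{\mathrm{ap}}(t)=M(t)\mathcal D(t)W(t)$. The residual then splits into a main term of the form $M\mathcal D\,t^{-2}H_\alpha W(t)$, plus harmless commutator terms of order $t^{-1}\log t$ better in $L^2$, controlled by the graph bound \eqref{eq:full-modifier-graph-growth} on the $\mathcal D_\alpha^\sharp$ part. Since $w$ solves $i\partial_t w-H_\alpha w=\lambda(|u|u-|u_{\mathrm{ap}}|u_{\mathrm{ap}})-R$ with $w(t)\to0$, we have
\[
w(t)=i\int_t^\infty e^{-i(t-s)H_\alpha}\bigl(N(s)-R(s)\bigr)\,ds .
\]
The nonlinear difference $N$ is bounded by $\|\phi\|_\infty$ times $w$, so by Strichartz it is $\le C\varepsilon\,\tau^{-b}\cdot$(a weight). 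Hence, if $b\ge1/2+\nu$, we would get $\bigl\|\int_t^\infty e^{isH_\alpha}R(s)\,ds\bigr\|_2\lesssim t^{-1/2-\nu}$, with the absorption by smallness handled as in the existence proof.

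The main step, and the main obstacle, is a lower bound showing that this Duhamel integral of $R$ has $L^2$ norm at least $c\,t^{-1/2-\nu}\log t$. For the pure trace profile \eqref{eq:pure-trace-profile}, near the pole $W(t)=c\,r^\nu e^{im_\nu\theta}e^{-i\gamma|c|r^\nu}$ with $\gamma=\lambda\log t/2$. Its expansion is $c r^\nu e^{im\theta}-i\gamma c|c|r^{2\nu}e^{im\theta}+O(\gamma^2r^{3\nu})$. The $r^\nu$ term is annihilated by the channel-$m$ Bessel operator, so the leading singular part of $H_\alpha W$ is $-i\gamma c|c|\bigl((2\nu)^2-\nu^2\bigr)r^{2\nu-2}e^{im\theta}$. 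This term is not in $L^2$ because $2\nu-2\le-1$. I would compute its contribution exactly in channel $m$ using the Hankel transform of order $\nu$. Under $\mathcal F_\alpha$, composing with $\mathcal D(s)$, $M(s)$ and integrating in $s$ gives an explicit homogeneous function. The Hankel transform of $\chi(r)r^{2\nu-2}$ of order $\nu$ behaves like $C_\nu\rho^{-2\nu}$ for large $\rho$, with explicit $C_\nu\neq0$ from the Weber integral $\int_0^\infty r^{2\nu-1}J_\nu(\rho r)\,dr=2^{2\nu-1}\Gamma(3\nu/2)\rho^{-2\nu}/\Gamma(1-\nu/2)$. Rescaling $\rho\sim s\xi$ and integrating over $s\in[t,\infty)$, the high frequency region $|\xi|\gtrsim1$ contributes $\log s\cdot s^{-1/2-\nu}$ from the $s^{-2}\cdot s^{1-2\nu}\cdot s^{1/2+\nu}$ bookkeeping. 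I expect the delicate point to be showing that the oscillatory $s$-integral does not cancel. I would try first to project onto channel $m$ at frequencies $\rho\approx t$, where the phases $e^{is\rho^2}$ combined with $M$ are nonstationary only mildly, and to bound $|\int_t^{2t}|$ from below directly. The remaining $O(\gamma^2r^{3\nu-2})$ and cutoff terms are lower order by a factor $t^{-\nu}$ or by smoothness, and the $\mathcal D_\alpha^\sharp$ contributions are $O(t^{-1+})$. Together these give $\|w(t)\|_2\ge c\,t^{-1/2-\nu}\log t$ for large $t$, contradicting the bound $t^{-b}$ with $b\ge1/2+\nu$.
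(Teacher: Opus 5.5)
\paragraph{Overall assessment.} Much of your proposal matches the paper: the reduction of the general clause to $u$ via uniqueness at $b_0$, the splitting $z=z_{\mathrm{lin}}+z_{\mathrm{nl}}$ with $\|z_{\mathrm{nl}}(t)\|_2\lesssim t^{-b}+t^{1/2-2b}$, and the identification of the $r^{2\nu}$ term of the modified trace as the obstruction. Your Weber constant $2^{2\nu-1}\Gamma(3\nu/2)/\Gamma(1-\nu/2)$, multiplied by $-3\nu^2$, is exactly the paper's $\kappa_\nu$ (note $L_\nu r^{2\nu}=-3\nu^2r^{2\nu-2}$ and $\mathcal H_\nu L_\nu=\rho^2\mathcal H_\nu$).

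\paragraph{The gap.} The genuine gap is the lower bound $\|z_{\mathrm{lin}}(t)\|_2\geq c\,t^{-1/2-\nu}\log t$. This is the whole content of the theorem, you leave it as an expectation, and the route you sketch does not work. Conjugate exactly with the factorization: put $\sigma=1/(4t)$, $\eta=1/(4s)$, and use $\mathcal F_\alpha M(t)M(s)^{-1}\mathcal F_\alpha^{-1}=e^{i(\sigma-\eta)H_\alpha^{(\rho,\theta)}}$. This gives
\begin{equation*}
  Y(\sigma):=D(t)^{-1}M(t)^{-1}z_{\mathrm{lin}}(t)
  =i\int_0^\sigma e^{i(\sigma-\eta)L_\nu}L_\nu W_\eta\,d\eta .
\end{equation*}
The leading part of $\mathcal H_\nu Y(\sigma)$ is therefore $a|c|c\,(\mathcal H_\nu g)(\rho)$, with $g=\chi^2r^{2\nu}$, times the symbol $\rho^2\int_0^\sigma e^{i(\sigma-\eta)\rho^2}\log\frac1{4\eta}\,d\eta$. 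The relevant band is $\sigma\rho^2\sim1$ in the Hankel variable of $Y$, so $\rho\sim t^{1/2}$ there; it is not captured by a localization at $\rho\approx t$.

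On that band the two time windows contribute at the same order:
\begin{itemize}
\item the part of the symbol from $\eta\in[\sigma/2,\sigma]$ (your window $s\in[t,2t]$) equals $\frac{e^{i\sigma\rho^2/2}-1}{i}\log\frac1\sigma+O(1)$;
\item the tail $\eta\in(0,\sigma/2)$, i.e.\ $s\geq2t$, equals $\frac{e^{i\sigma\rho^2}-e^{i\sigma\rho^2/2}}{i}\log\frac1\sigma+O(1)$, which has the same leading modulus.
\end{itemize}
At $\sigma\rho^2=2\pi$ the two cancel exactly at leading order. So a lower bound on $\int_t^{2t}$ says nothing about $z_{\mathrm{lin}}(t)$.

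\paragraph{The missing idea.} You need to evaluate the entire $\eta$-integral. The substitution $\eta=\sigma\tau$ gives \eqref{eq:rate-logarithmic-Duhamel-symbol}, hence the symbol equals $\frac{e^{i\sigma\rho^2}-1}{i}\log\frac1\sigma+O(1)$ uniformly for $\sigma\rho^2\in[1,4]$, where $|e^{i\sigma\rho^2}-1|$ is bounded below. Combine this with $\mathcal H_\nu g=\kappa_\nu\rho^{-2\nu-2}(1+o(1))$ and the scaling $y=\sigma^{1/2}\rho$. This yields $\|\Pi_\sigma Y_{\mathrm{lead}}(\sigma)\|_2\simeq\sigma^{\nu+1/2}\log\frac1\sigma$, where $\Pi_\sigma$ is the spectral projection of $L_\nu$ onto $[\sigma^{-1},4\sigma^{-1}]$.

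The projection also handles a point you skip. Since $L_\nu W_\eta\notin L^2$, $Y$ exists only as a retarded Strichartz limit. By contrast, $\Pi_\sigma Y$ is an ordinary $L^2$ integral that can be computed on the Hankel side, and $\|Y\|_2\geq\|\Pi_\sigma Y\|_2$.

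The remaining pieces are then bounded in norm:
\begin{itemize}
\item the $f$-part is bounded by $\sigma\|L_\nu f\|_2$;
\item the Taylor remainder $Q_\ell=O(r^{3\nu})$ is handled by a Hankel bound $|\mathcal H_\nu Q_\ell(\rho)|\lesssim(1+|\ell|)^M\rho^{-3\nu-2}$, uniform in $\ell$. This gives $O\bigl(\sigma^{(3\nu+1)/2}(1+\log\frac1\sigma)^M\bigr)$, a gain of $\sigma^{\nu/2}$, not $t^{-\nu}$.
\end{itemize}

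\paragraph{A further correction.} The residual has no commutator terms. By the weak pseudoconformal identity it is exactly $-\frac1{4t^2}M(t)D(t)H_\alpha W(t)$ with $H_\alpha W(t)\in L^2+L^p$. The graph bound \eqref{eq:full-modifier-graph-growth} you invoke is not available here, since $\phi\notin\mathcal D_\alpha^\sharp$.
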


Of course, the previous sharpness statement concerns 
the logarithmic ansatz \eqref{eq:all-channel-uap},
but it does not rule out a faster remainder
after adding higher order corrections.

The next result computes the first term beyond the logarithmic
profile.  
The construction of the first correction is guided by 
higher order expansions for flat critical NLS, notably 
Cazenave–Naumkin~\cite{CN18} and the polynomial logarithmic 
hierarchy of Jendrej–Salvi~\cite{JS26}. Compare with the 
candidate second profile of Masaki–Miyazaki–Uriya~\cite{MMU19}.
We restrict to profiles whose modulus satisfies
$\rho(x)=O(|x|^6)$ near 0, which we call
\emph{second order core} profiles;
see Definition~\ref{def:second-order-core}.  These stronger
hypotheses keep regularity at zeros out of the asymptotic
calculation.  On $\mathbb R^2\setminus\{0\}$, we write the
profiles in polar form $\phi=\rho\omega$, with $\rho\geq0$ and
$|\omega|=1$.

\begin{theorem}[First correction]
\label{the:second-order}
  Fix $1<b<2$ and $\lambda\in\mathbb R$.  Let
  $\phi=\rho\omega$ be a second order core profile in the sense
  of Definition~\ref{def:second-order-core}, and assume that
  $\|\rho\|_\infty$ is sufficiently small.  Put
  $a=\lambda\rho/2$, let
  \begin{equation}
    \label{eq:q-polynomial}
    q(\sigma)
    =
    q_2\sigma^2+q_1\sigma+q_0,
    \qquad
    \sigma\in \mathbb{R},
  \end{equation}
  where the coefficients are determined by
  \eqref{eq:second-order-q-recursion}, and 
  with the notations in \eqref{eq:MD-definition-intro},
  define
  \begin{equation}
    \label{eq:second-order-uap}
    u_{\mathrm{ap}}^{(2)}(t)
    =
    u_{\mathrm{ap}}(t)
    +
    \frac1tM(t)D(t)
    \left[
      e^{-ia\log t}q(\log t)
    \right].
  \end{equation}
  Then \eqref{eq:main-NLS} has a unique global solution satisfying
  \begin{equation}
    \label{eq:second-order-asymptotic}
    \sup_{\tau\geq T}
    \tau^b
    \bigl(
      \|u-u_{\mathrm{ap}}^{(2)}\|_{
        L^\infty([\tau,\infty);L^2)}
      +
      \|u-u_{\mathrm{ap}}^{(2)}\|_{
        L^4([\tau,\infty);L^4)}
    \bigr)
    <
    \infty
  \end{equation}
  for some $T\geq1$. In particular,
  \begin{equation}
    \label{eq:second-order-expansion}
    u(t)
    =
    u_{\mathrm{ap}}(t)
    +
    \frac1tM(t)D(t)
    \left[
      e^{-ia\log t}q(\log t)
    \right]
    +
    O_{L^2}(t^{-b}).
  \end{equation}
\end{theorem}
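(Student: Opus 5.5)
We prove the theorem by the same final-state fixed point used for Theorem~\ref{the:main-final-state}, now with the improved ansatz $u_{\mathrm{ap}}^{(2)}$.

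\textbf{Step 1: residual and choice of $q$.} Write $u=u_{\mathrm{ap}}^{(2)}+v$. The remainder then solves
\begin{equation*}
  i\partial_tv-H_\alpha v=\lambda\bigl(|u_{\mathrm{ap}}^{(2)}+v|(u_{\mathrm{ap}}^{(2)}+v)-|u_{\mathrm{ap}}^{(2)}|u_{\mathrm{ap}}^{(2)}\bigr)-R^{(2)},
\end{equation*}
with residual
\begin{equation*}
  R^{(2)}=i\partial_tu_{\mathrm{ap}}^{(2)}-H_\alpha u_{\mathrm{ap}}^{(2)}-\lambda|u_{\mathrm{ap}}^{(2)}|u_{\mathrm{ap}}^{(2)}.
\end{equation*}
We conjugate by $M(t)D(t)$. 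Away from the pole this turns $H_\alpha$ into $(4t^2)^{-1}H_\alpha$ acting on the profile variable, plus the modulated time derivative.

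For $u_{\mathrm{ap}}$ alone, the leading residual is $t^{-2}M D[H_\alpha W/4]$, where $W=\phi e^{-ia\log t}$. By the graph growth bound \eqref{eq:full-modifier-graph-growth} this is $O(t^{-2}(\log t)^2)$ in $L^2$ after the $\frac1t MD$ normalization. That size is exactly what limits the rate to $b<1$.

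The correction term $\frac1t MD[e^{-ia\log t}q(\log t)]$ is designed to cancel this residual. Inserting it and collecting all terms of order $t^{-2}$ gives an ODE in $s=\log t$ for the correction profile $z(s)=e^{-ias}q(s)$:
\begin{equation*}
  i\dot z-\lambda\bigl(|W+z|(W+z)-|W|W\bigr)_{\mathrm{lin}}=\tfrac14 e^{-ias}\bigl(\text{polynomial in } s \text{ of degree }2\bigr).
\end{equation*}
Here $\lambda(\ldots)_{\mathrm{lin}}=\frac{\lambda}{2}\bigl(3\rho z+\omega^2\rho\bar z\bigr)$ is the linearization of $\lambda|w|w$ at $w=W$, in polar form.

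The linearized operator mixes $z$ and $\bar z$, so we solve for $q$ by matching powers of $s$. This is what \eqref{eq:second-order-q-recursion} encodes. It is solvable on $\{\rho>0\}$ because the coefficient of the top power is invertible there, with $q$ divided by $a$. The hypothesis $\rho=O(|x|^6)$ at $0$, together with the structure near zeros of $\rho$ in Definition~\ref{def:second-order-core}, guarantees three things:
\begin{itemize}
  \item the divisions by $a$ still leave $q_j\in\dom(H_\alpha)$;
  \item $H_\alpha$ applied to $e^{-ias}q_j$ stays in $L^2$ with at most polynomial growth in $s$;
  \item the Friedrichs traces of the relevant functions vanish, so Theorem~\ref{the:sharp-full-domain} applies with no forbidden trace.
\end{itemize}

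\textbf{Step 2: size of the new residual.} After the cancellation, $R^{(2)}$ consists of three kinds of terms:
\begin{itemize}
  \item $t^{-3}MD[H_\alpha(e^{-ias}q)]/4$, which is $O(t^{-3}(\log t)^N)$ in $L^2$ after normalization;
  \item the quadratic and higher remainder of the nonlinearity in $z$, of size $t^{-3}|q|^2/\rho$-type, controlled by the core assumption;
  \item the commutator error between the pointwise model $MD$ and the true flow, i.e.\ the difference between $e^{-itH_\alpha}$ and $MD\mathcal F_\alpha e^{-iH_\alpha/(4t)}$, which we expand to second order.
\end{itemize}
We bound the third kind by the standard factorization of the Aharonov--Bohm propagator, $e^{-itH_\alpha}=MD\mathcal F_\alpha M$, combined with $\|(M-1)f\|_2\lesssim t^{-1}\|\,|x|^2f\|_2$ and its second-order version. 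This requires the weighted regularity contained in the core definition.

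Altogether we get $\|R^{(2)}(t)\|_{L^2}\lesssim t^{-2}(\log t)^N$, which is integrable against $t^{b}$ for every $b<2$.

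\textbf{Step 3: contraction.} We run the contraction for the Duhamel map
\begin{equation*}
  v(t)=-i\int_t^\infty e^{-i(t-s)H_\alpha}\bigl(N(v)-R^{(2)}\bigr)\,ds
\end{equation*}
in the space with norm $\sup_{\tau\ge T}\tau^b\bigl(\|v\|_{L^\infty L^2([\tau,\infty))}+\|v\|_{L^4L^4([\tau,\infty))}\bigr)$. We use the Strichartz estimates for $H_\alpha$.

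The linear term $\lambda|u_{\mathrm{ap}}^{(2)}|v$ is bounded by $\|u_{\mathrm{ap}}^{(2)}\|_\infty\lesssim t^{-1}\|\rho\|_\infty$. The $t^{-1}$ decay is time-critical: for the weight $t^{b}$ it produces an integral $\int_t^\infty s^{-1}s^{-b}\,ds=b^{-1}t^{-b}$. So we close the estimate by requiring $\|\rho\|_\infty$ small in terms of $b$, exactly as in Theorem~\ref{the:main-final-state}; I expect this part to transfer verbatim. The quadratic term in $v$ gains $t^{-b}$ and is harmless for $T$ large. Uniqueness follows from the same contraction.

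\textbf{Main obstacle.} The hardest step is Step 1: solving for $q$ so that every coefficient, and its image under $H_\alpha$, is in $L^2$. The divisions by $\rho$ are the danger, both near zeros of $\rho$ and at the pole. I would first verify the recursion explicitly, with $q_2$ proportional to $\omega\,\lambda^2|\nabla\rho|^2/\rho$-type quantities, and then check that $\rho=O(|x|^6)$ absorbs the $r^{-2}$ singularities that the Aharonov--Bohm Laplacian generates up to two applications.
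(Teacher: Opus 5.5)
You have the contraction step right, and it is the same as the paper's. Steps~1 and~2, however, contain errors the argument does not survive.

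\textbf{Step 1: the recursion.} The correction carries the factor $1/t=e^{-\sigma}$, and $i\partial_\sigma(e^{-\sigma}Y)=e^{-\sigma}(iY'-iY)$; your profile equation drops this $-iY$ term. You also use the linearization of $\lambda|w|w$, but in the $\sigma$ variable the profile nonlinearity is $F_\lambda(z)=\frac{\lambda}{2}|z|z$, which is the equation $W=e^{-ia\sigma}\phi$ actually solves. With $Y=e^{-ia\sigma}q$, the term $aq$ coming from $i\partial_\sigma e^{-ia\sigma}$ cancels the $aq$ in $DF_\lambda(\phi)q=aq+\mathcal L_\phi q$. The correct equation is $iq'+B_\phi q=\frac14(R_0+\sigma R_1+\sigma^2R_2)$ with $B_\phi=-iI-\mathcal L_\phi$. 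In the frame $(\omega,i\omega)$ this is the matrix \eqref{eq:B-real-matrix}, which has determinant one. So $B_\phi$ is invertible at every point, including the zeros of $\rho$ and the pole, with the explicit inverse \eqref{eq:B-explicit-inverse}. No division by $a$ ever occurs: for instance $q_2=\frac i4|\nabla a|^2\phi$ carries a factor $\rho$, not $\rho^{-1}$.

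The $-iq$ term is essential, not cosmetic. Without it the remaining operator is the rank-one map $\mathcal L_\phi$, and matching powers of $\sigma$ breaks down at order $\sigma^1$. Write $q_j=\omega(x_j+iy_j)$. The $\sigma^2$ equation forces $x_2=-\lambda|\nabla\rho|^2/8$ on $\{\rho>0\}$. The imaginary part of the $\sigma^1$ equation then requires $4|\nabla\rho|^2+\rho\Delta\rho=0$ there, i.e.\ $\rho^5$ harmonic on $\{\rho>0\}$. This is impossible for nonzero compactly supported $\rho$ when $\lambda\neq0$.

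The route you describe, inverting an operator that is invertible only on $\{\rho>0\}$ with inverse of size $a^{-1}$, cannot be rescued by the core hypothesis. It puts terms like $a^{-1}H_\alpha\phi$ into $q_0$. Definition~\ref{def:second-order-core} allows a nondegenerate zero $x_0\neq0$ of $\rho$, say $\rho\sim|x-x_0|^2$. There $H_\alpha\phi(x_0)=-\Delta\rho(x_0)\omega(x_0)\neq0$, so $a^{-1}H_\alpha\phi\sim|x-x_0|^{-2}$ is not even in $L^2$. The condition $\rho=O(|x|^6)$ concerns only the pole. In the paper it serves solely to give the $q_j$ the vanishing orders \eqref{eq:q-pole-orders}, which place $e^{-ia\sigma}q(\sigma)$ in $\mathcal Y$.

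\textbf{Step 2: the residual.} Step~2 does not give the stated range. A residual of size $t^{-2}(\log t)^N$ yields $\tau^b\|R^{(2)}\|_{L^1([\tau,\infty);L^2)}\sim\tau^{b-1}(\log\tau)^N$, which diverges for every $b>1$. You need the bound $t^{-3}(1+\log t)^4$ that the paper proves.

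The source of the $t^{-2}$ is your third bullet, which should not be there. The magnetic pseudoconformal identity is exact, so after the cancellation the residual is exactly $-t^{-3}M(t)D(t)\bigl[\mathcal Q_\lambda(\log t)+\frac14H_\alpha Y(\log t)\bigr]$. There is no commutator between $M(t)D(t)$ and the propagator. The factorization error $M(t)-1$ only enters when identifying the asymptotic state with $e^{-itH_\alpha}\mathcal N_\alpha(t)u_+$.

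Likewise, the nonlinear remainder is bounded by $C|\lambda|\,|q|^2$ because $DF_\lambda$ is globally Lipschitz. An estimate of type $|q|^2/\rho$ would fail for the reason above: at such a zero one computes $q_2(x_0)=q_1(x_0)=0$ and $q_0(x_0)=\frac i4H_\alpha\phi(x_0)\neq0$.
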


\subsection{The mechanism}

The two structural identities used in the proof are
\begin{equation}
  \label{eq:intro-factorization}
  e^{-itH_\alpha}
  =
  M(t)D(t)\mathcal F_\alpha M(t)
\end{equation}
and
\begin{equation}
  \label{eq:intro-pseudoconformal}
  (i\partial_t-H_\alpha)M(t)D(t)W
  =
  M(t)D(t)
  \left(
    i\partial_tW-\frac{H_\alpha W}{4t^2}
  \right).
\end{equation}
Here $\mathcal F_\alpha$ is the explicit Aharonov--Bohm
distorted Fourier transform \eqref{eq:distorted-transform},
while
\begin{equation}
  \label{eq:MD-definition-intro}
  M(t)f(x)
  =
  e^{i|x|^2/(4t)}f(x),
  \qquad
  D(t)f(x)
  =
  \frac1{2it}f\left(\frac{x}{2t}\right).
\end{equation}
Both $M(t)$ and $D(t)$ are unitary on $L^2(\mathbb R^2)$.

For $W$ defined by \eqref{eq:intro-W}, set
$u_{\mathrm{ap}}(t)=M(t)D(t)W(t)$ as in \eqref{eq:all-channel-uap}.
The factor $1/2$ in the phase is fixed by the normalization
of $D(t)$.  Indeed,
\begin{equation}
  \label{eq:intro-nonlinearity-scaling}
  |u_{\mathrm{ap}}|u_{\mathrm{ap}}=
  |M(t)D(t)W|M(t)D(t)W
  =
  M(t)D(t)
  \left(\frac1{2t}|W|W\right).
\end{equation}
$W$ solves the pointwise equation
\begin{equation}
  \label{eq:intro-profile-ODE}
  i\partial_tW
  =
  \frac{\lambda}{2t}|W|W.
\end{equation}
Combining \eqref{eq:intro-pseudoconformal} and
\eqref{eq:intro-profile-ODE} we get the exact error
\begin{equation}
  \label{eq:intro-error}
  (i\partial_t-H_\alpha)u_{\mathrm{ap}}
  -
  \lambda|u_{\mathrm{ap}}|u_{\mathrm{ap}}
  =
  -\frac1{4t^2}M(t)D(t)H_\alpha W.
\end{equation}
The thresholds in the main results come from the same
local term. To simplify the discussion,
ignore the fixed angular factor and cutoff, and
focus on a nonzero trace $cr^\mu$, where $0<\mu<1$.  Near the
pole, for $\gamma\neq0$,
\begin{equation*}
  G_\gamma(cr^\mu)
  =
  cr^\mu
  -i\gamma|c|c r^{2\mu}
  +O(r^{3\mu}).
\end{equation*}
The model Bessel operator 
$L_\mu=-\Delta_{r}+\frac{\mu^{2}}{r^{2}}$,
where
$\Delta_r:=\partial_r^2+\frac1r\partial_r$,
annihilates $r^\mu$ and
sends the new term $r^{2\mu}$ to a nonzero multiple of
$r^{2\mu-2}$.  Since the radial measure is $r\,dr$,
\begin{equation*}
  r^{2\mu-2}\in L^2((0,1),r\,dr)
  \quad\Longleftrightarrow\quad
  \mu>\frac12.
\end{equation*}
This is exactly the domain threshold in
Theorem~\ref{the:sharp-full-domain}.

When the trace condition
holds, $\|H_\alpha W(t)\|_2$ grows at most like
$(1+\log t)^2$, so the error in \eqref{eq:intro-error} is
integrable in $L^1_t(0,\infty; L^2_x)$.

When $\mu\leq1/2$, the $L^2$ test fails, but
$r^{2\mu-2}$ belongs locally to $L^p$ exactly when
$p<(1-\mu)^{-1}$.  For the lowest trace order
$\mu=\nu_\alpha$, Lemma~\ref{lem:full-domain-weak-image}
therefore places $H_\alpha W$ in $L^2+L^p$ for
$1<p<(1-\nu_\alpha)^{-1}$.  After dilation, the $L^p$ part is
bounded by $t^{-3+2/p}(1+\log t)^2$.  The dual Strichartz time
exponent gives the tail
\begin{equation}
  \label{eq:intro-mixed-residual-tail}
  O\left(
    t^{-\delta_p}(1+\log t)^2
  \right),
  \qquad
  \delta_p
  =
  \frac32-\frac1p.
\end{equation}
Letting $p$ approach $(1-\nu_\alpha)^{-1}$ explains the range
$b<1/2+\nu_\alpha$ in
Theorem~\ref{the:main-final-state}.  Finally, the high
frequency Hankel calculation in
Subsection~\ref{sec:pure-trace-rate-sharpness} shows that, on
a pure forbidden trace, this same local term produces the
corresponding lower bound.
In other words, the trace that breaks the operator
domain also sets the best decay rate for the logarithmic ansatz.

One long range difficulty remains.  The linear term in
\begin{equation*}
  |u_{\mathrm{ap}}+z|
  (u_{\mathrm{ap}}+z)
  -
  |u_{\mathrm{ap}}|u_{\mathrm{ap}}
\end{equation*}
has size $t^{-1}\|\phi\|_\infty |z|$.  Its time integral is
logarithmically divergent without decay of $z$.  We close the
final state problem in a weighted Strichartz space with
$\|z(t)\|_2=O(t^{-b})$, $1/2<b<1$.  Smallness of
$\|\phi\|_\infty$ absorbs this linear long range term, 
and the quadratic term in $z$ is smaller because $b>1/2$.
At the next order, the recursion
\eqref{eq:second-order-q-recursion} cancels the 
$t^{-2}$ error in \eqref{eq:intro-error}, and the resulting
residual is $O(t^{-3}(1+\log t)^4)$ in $L^2$.

\begin{remark}[Negative times]
\label{rem:negative-times}
  We stated all results for $t\to+\infty$, but the
  case $t\to-\infty$ is perfectly symmetric.
  If $u$ solves \eqref{eq:main-NLS} with flux $\alpha$, then
  $v(t,x)=\overline{u(-t,x)}$ solves
  $i\partial_tv=H_{-\alpha}v+\lambda|v|v$.
  For $0<\alpha<1$, the flux $-\alpha$ is gauge equivalent
  to $1-\alpha$.  Thus all the above results have past analogues,
  and the decay rates are exactly the same
  because $\nu_{1-\alpha}=\nu_\alpha$.

  More precisely, for a past profile $\phi$, 
  we use the same formulas for $M(t)$
  and $D(t)$ at $t<0$ and set
  \begin{equation*}
    W_-(t)
    =
    \phi
    \exp\left(
      \frac{i\lambda}{2}|\phi|\log|t|
    \right),
    \qquad
    u_{\mathrm{ap}}^-(t)
    =
    M(t)D(t)W_-(t).
  \end{equation*}
  The nonlinear phase changes sign because
  $i\partial_tW_-=\lambda|W_-|W_-/(2|t|)$.
  The sharpness and the first correction statements transfer
  immediately.
\end{remark}

\subsection{Relation with the literature and organization}

The free long range mechanism originates in
Ginibre--Ozawa~\cite{GO93}; see also \cite{MM18} for critical
homogeneous nonlinearities and \cite{HN06} for the domain and
range of the flat modified wave operator.  The Aharonov--Bohm
representation
and decay theory used here comes from~\cite{FFFP13,FFFP15}; 
see also \cite{GYZZ21,FSWZZ26} for related distorted Fourier and
intertwining results.  Modified scattering for related inverse
square and inhomogeneous models was developed in
\cite{AIMMU21,GX20,KM25,KSW25}.  The selfadjoint realizations
of the single flux Aharonov--Bohm operator were classified by
Adami--Teta~\cite{AT98}; however, throughout this paper we
use only the Friedrichs realization.  The Bessel operator
domain decomposition used in the channel theorem is due to
Derezi\'nski--Georgescu~\cite{DG21}; the corresponding
Aharonov--Bohm boundary expansion is reviewed in
Fermi~\cite{Fer24}.  Existing nonlinear
Aharonov--Bohm results such as \cite{ZZ18,MS24} concern
different nonlinearities or scattering regimes.
For the flat critical equation, higher asymptotic expansions
were obtained for specially oscillatory, nonvanishing data in
\cite{CN18}; related second-profile constructions are given in
\cite{MMU19,JS26}.
We mention that the cited long range constructions work with 
profile classes for which the logarithmic modifier remains 
in the operator domain, and do not address Aharonov--Bohm profiles 
such that $G_\gamma(\phi)\notin\operatorname{dom}(H_\alpha)$,
whose residual must instead be treated in \(L^2+L^p\).
The distinction between operator domain failure and a mixed
$L^2+L^p$ residual is the new mechanism behind
Theorem~\ref{the:main-final-state}.

We conclude with a brief outline of the paper.
Section~\ref{sec:AB-Hamiltonian} collects the spectral
notation, exact linear factorization, Strichartz estimates, and
global $L^2$ theory.
Section~\ref{sec:pseudoconformal} derives the magnetic
pseudoconformal identity and the residual estimate.
The sharp one channel and full angular domain theorems are
proved in Section~\ref{sec:sharp-channel-domains}, together
with the weak full domain image used for forbidden traces.
Section~\ref{sec:all-channel} proves the modified final state,
identifies the nonlinear wave operator, proves sharpness on a
pure forbidden trace, and constructs the first correction.
Appendix~\ref{sec:one-channel} gives a direct Weber--Hankel
normalization of the channel flow.

\section{Linear preliminaries and global \hyp{L^2} theory}
\label{sec:AB-Hamiltonian}

This section fixes notation and collects the standard linear
facts used in the rest of the paper.
Details are included only where the exact
normalization or choice of exponents matters later.

\subsection{Angular decomposition}

In polar coordinates $x=(r\cos\theta,r\sin\theta)$,
\begin{equation}
  \label{eq:AB-polar}
  H_\alpha
  =
  -\partial_r^2-\frac1r\partial_r
  +
  \frac1{r^2}
  \bigl(-i\partial_\theta+\alpha\bigr)^2.
\end{equation}
For $m\in\mathbb Z$, let
\begin{equation}
  \label{eq:mu-m}
  \mu_m
  =
  |m+\alpha|
\end{equation}
and define the Bessel operator
\begin{equation}
  \label{eq:Bessel-operator}
  L_{\mu_m}
  =
  -\partial_r^2-\frac1r\partial_r
  +
  \frac{\mu_m^2}{r^2}
\end{equation}
in $L^2((0,\infty),r\,dr)$, with its Friedrichs
realization.  If
\begin{equation}
  \label{eq:channel-function}
  u(r,\theta)
  =
  e^{im\theta}v(r),
\end{equation}
then
\begin{equation}
  \label{eq:channel-linear}
  H_\alpha u
  =
  e^{im\theta}L_{\mu_m}v.
\end{equation}

\subsection{Hankel transform}

For $\mu\geq0$, define
\begin{equation}
  \label{eq:Hankel-transform}
  (\mathcal H_\mu f)(\rho)
  =
  \int_0^\infty
  J_\mu(r\rho)f(r)r\,dr,
  \qquad
  \rho>0.
\end{equation}
The Hankel transform $\mathcal H_\mu$ is a selfadjoint unitary
operator from $L^2(r\,dr)$ to $L^2(\rho\,d\rho)$ and
\begin{equation}
  \label{eq:Hankel-diagonalization}
  \mathcal H_\mu L_\mu f
  =
  \rho^2\mathcal H_\mu f.
\end{equation}
It also satisfies the dual identity
\begin{equation}
  \label{eq:Hankel-dual}
  \mathcal H_\mu(r^2f)
  =
  L_\mu\mathcal H_\mu f,
\end{equation}
where the operator on the right acts in the $\rho$ variable.

\subsection{Distorted Fourier transform and linear estimates}
\label{sec:factorization}

Decompose $f\in L^2(\mathbb R^2)$ in spherical harmonics
\begin{equation}
  \label{eq:Fourier-series}
  f(r,\theta)
  =
  \sum_{m\in\mathbb Z}
  f_m(r)e^{im\theta}
\end{equation}
and define
\begin{equation}
  \label{eq:distorted-transform}
  (\mathcal F_\alpha f)(\rho,\theta)
  =
  \sum_{m\in\mathbb Z}
  e^{-i\pi\mu_m/2}
  (\mathcal H_{\mu_m}f_m)(\rho)
  e^{im\theta},
  \qquad
  \mu_m
  =
  |m+\alpha|.
\end{equation}
This is a unitary operator on $L^2(\mathbb R^2)$.  On the
transform side, write $H_\alpha^{(\rho,\theta)}$ for the same
differential expression as $H_\alpha$ with radial variable
$\rho$.  The transform obeys
\begin{equation}
  \label{eq:distorted-intertwining}
  \mathcal F_\alpha H_\alpha
  =
  \rho^2\mathcal F_\alpha
\end{equation}
and
\begin{equation}
  \label{eq:distorted-dual-intertwining}
  \mathcal F_\alpha |x|^2
  =
  H_\alpha^{(\rho,\theta)}\mathcal F_\alpha.
\end{equation}

The propagator of $H_{\alpha}$ admits an explicit Dollard type
factorization in terms of the distorted Fourier transform
$\mathcal{F}_{\alpha}$, which is the channel form of
the representation formula in
Fanelli--Felli--Fontelos--Primo~\cite{FFFP13}.  The direct
Hankel proof in Appendix~\ref{sec:channel-factorization} fixes
every phase and the factor $2$ in the nonlinear logarithmic
correction.

\begin{proposition}[Full factorization]
\label{prop:full-factorization}
  For every $t>0$,
  \begin{equation}
    \label{eq:full-factorization}
    e^{-itH_\alpha}
    =
    M(t)D(t)\mathcal F_\alpha M(t).
  \end{equation}
\end{proposition}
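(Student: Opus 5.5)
The identity will be proved channel by channel. Both sides are bounded operators on $L^2(\mathbb R^2)$, so it suffices to check it on each angular subspace $\{e^{im\theta}v(r)\}$ and on a dense class of radial profiles $v$, say $v\in C_c^\infty((0,\infty))$. The multiplier $M(t)$ is radial and $D(t)$ is a radial dilation, so both commute with the angular decomposition. Since $\mathcal F_\alpha$ acts on channel $m$ by $e^{-i\pi\mu_m/2}\mathcal H_{\mu_m}$, and by \eqref{eq:channel-linear} the propagator acts on channel $m$ by $e^{-itL_{\mu_m}}$, the claim reduces to the one-dimensional Bessel identity
\begin{equation*}
  e^{-itL_\mu}v(r)
  =
  \frac{e^{-i\pi\mu/2}}{2it}\,
  e^{ir^2/(4t)}
  \int_0^\infty J_\mu\Bigl(\frac{rs}{2t}\Bigr)
  e^{is^2/(4t)}v(s)\,s\,ds,
  \qquad \mu=\mu_m\geq0.
\end{equation*}
This is to be established for every $\mu\ge0$ and $t>0$.

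To prove the Bessel identity, I will use the Hankel diagonalization \eqref{eq:Hankel-diagonalization}. Since $\mathcal H_\mu$ is a selfadjoint involution, $e^{-itL_\mu}=\mathcal H_\mu e^{-it\rho^2}\mathcal H_\mu$. This gives the kernel
\begin{equation*}
  K_t(r,s)=\int_0^\infty J_\mu(r\rho)J_\mu(s\rho)e^{-it\rho^2}\rho\,d\rho .
\end{equation*}
I will evaluate it by Weber's second exponential integral,
\begin{equation*}
  \int_0^\infty J_\mu(a\rho)J_\mu(b\rho)e^{-p\rho^2}\rho\,d\rho
  =\frac1{2p}e^{-(a^2+b^2)/(4p)}I_\mu\Bigl(\frac{ab}{2p}\Bigr),
  \qquad \operatorname{Re}p>0,
\end{equation*}
and then pass to the limit $p=\varepsilon+it$, $\varepsilon\downarrow0$. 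Using $I_\mu(-iz)=e^{-i\pi\mu/2}J_\mu(z)$ for $z>0$, together with $1/(2p)\to 1/(2it)$ and $e^{-(a^2+b^2)/(4p)}\to e^{i(r^2+s^2)/(4t)}$, the kernel becomes
\begin{equation*}
  K_t(r,s)=\frac{e^{-i\pi\mu/2}}{2it}\,e^{ir^2/(4t)}J_\mu\Bigl(\frac{rs}{2t}\Bigr)e^{is^2/(4t)}.
\end{equation*}
This matches term by term the kernel of $M(t)D(t)\,e^{-i\pi\mu/2}\mathcal H_\mu M(t)$: the factor $e^{is^2/(4t)}$ is the right-hand $M(t)$, the Hankel transform is evaluated at $\rho=r/(2t)$, the prefactor $1/(2it)$ is the one from $D(t)$, and $e^{ir^2/(4t)}$ is the left-hand $M(t)$. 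Summing over channels then gives \eqref{eq:full-factorization}, since the phases $e^{-i\pi\mu_m/2}$ are exactly those built into $\mathcal F_\alpha$ in \eqref{eq:distorted-transform}.

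The main obstacle is justifying the passage $\varepsilon\to0$ and the kernel manipulations, because the oscillatory integral defining $K_t$ is not absolutely convergent. I will avoid pointwise kernel limits. For $v\in C_c^\infty((0,\infty))$, the operator $e^{-(\varepsilon+it)L_\mu}=\mathcal H_\mu e^{-(\varepsilon+it)\rho^2}\mathcal H_\mu$ converges strongly in $L^2$ to $e^{-itL_\mu}$ as $\varepsilon\downarrow0$, by dominated convergence on the transform side. For $\varepsilon>0$ the Weber formula applies with absolutely convergent integrals, and Fubini is legitimate because $v$ has compact support away from $0$. It remains to show that the explicit right-hand side at $p=\varepsilon+it$ converges in $L^2$ to $M(t)D(t)e^{-i\pi\mu/2}\mathcal H_\mu M(t)v$.

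This is the delicate point. Convergence holds locally uniformly in $r$, but $L^2$ control at large $r$ requires uniform bounds on the approximating family. I will obtain these by noting that the $\varepsilon$-family is uniformly bounded in $L^2$, being the image of $v$ under contractions. The strong limit and the locally uniform pointwise limit must then agree. This weak-plus-pointwise identification is, I think, the cleanest way through.

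The remaining bookkeeping, namely checking the sign conventions for $I_\mu(-iz)$ on the correct branch with $\arg p\to\pi/2$, I would verify directly against the case $\mu=0$, where the identity reduces to the classical free Dollard factorization of the radial two-dimensional Schrödinger flow.
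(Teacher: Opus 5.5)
Your proposal is correct and takes the same route as the paper: reduce to angular channels, write $e^{-itL_\mu}$ through the Hankel spectral representation, evaluate the kernel with Weber's second exponential integral at $p=\varepsilon+it$, and pass to $\varepsilon=0$ using $I_\mu(-iz)=e^{-i\pi\mu/2}J_\mu(z)$, which is exactly the argument of Lemma~\ref{lem:channel-factorization}. Your treatment of the limit $\varepsilon\downarrow0$ (strong $L^2$ convergence of $e^{-(\varepsilon+it)L_\mu}v$, pointwise convergence of the explicit kernel, and identification along an a.e.\ convergent subsequence) rigorously fills in the continuation step that the paper only sketches; one caveat is that a check at $\mu=0$ cannot detect the $\mu$-dependent phase, which instead follows from the principal-branch relation you already quoted.
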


\begin{proof}
  Apply the channel factorization
  \eqref{eq:channel-factorization}, proved in
  Appendix~\ref{sec:channel-factorization}, to every coefficient
  in \eqref{eq:Fourier-series}.  Its phase is exactly the
  phase in \eqref{eq:distorted-transform}.  Summation in
  $L^2(\mathbb R^2)$ gives \eqref{eq:full-factorization}.
\end{proof}

\begin{proposition}[Strichartz estimates]
\label{prop:Strichartz}
  Let $(q,r)$ satisfy
  \begin{equation}
    \label{eq:admissible}
    2\leq q,r\leq\infty,
    \qquad
    \frac2q+\frac2r=1,
    \qquad
    (q,r)\neq(2,\infty).
  \end{equation}
  Then
  \begin{equation}
    \label{eq:homogeneous-Strichartz}
    \|e^{-itH_\alpha}f\|_{L^q_tL^r_x}
    \leq
    C_{\alpha,q,r}\|f\|_2.
  \end{equation}
  The corresponding retarded inhomogeneous estimates hold.
  In particular, if $1<p<2$ and
  \begin{equation}
    \label{eq:mixed-Strichartz-exponents}
    q_p'
    =
    \frac{2p}{3p-2},
  \end{equation}
  then
  \begin{equation}
    \label{eq:mixed-retarded-Strichartz}
    \left\|
      \int_t^{\sup I}
      e^{-i(t-s)H_\alpha}G(s)\,ds
    \right\|_{L^\infty_{t}(I;L^2)
      \cap L^4_{t}(I;L^4)}
    \leq
    C_{\alpha,p}
    \|G\|_{L^{q_p'}(I;L^p)}.
  \end{equation}
  In particular, for every interval $I$ and every
  $G\in L^1(I;L^2)$,
  \begin{equation}
    \label{eq:retarded-Strichartz}
    \left\|
      \int_t^{\sup I}
      e^{-i(t-s)H_\alpha}G(s)\,ds
    \right\|_{L^\infty_{t}(I;L^2)
      \cap L^4_{t}(I;L^4)}
    \leq
    C_\alpha\|G\|_{L^1(I;L^2)}.
  \end{equation}
\end{proposition}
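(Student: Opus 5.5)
The plan is to reduce everything to the dispersive estimate plus the abstract Keel--Tao machinery, and then obtain the mixed estimate \eqref{eq:mixed-retarded-Strichartz} by a $TT^*$/duality argument rather than from the endpoint.

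\emph{Dispersive estimate.} By Proposition~\ref{prop:full-factorization}, $e^{-itH_\alpha}=M(t)D(t)\mathcal F_\alpha M(t)$, and $M(t)$ is a unimodular multiplier. Hence
\begin{equation*}
  \|e^{-itH_\alpha}f\|_\infty\le (2|t|)^{-1}\|\mathcal F_\alpha\|_{L^1\to L^\infty}\|f\|_1 .
\end{equation*}
To bound $\mathcal F_\alpha$ from $L^1$ to $L^\infty$, we need the kernel $\sum_m e^{-i\pi\mu_m/2}J_{\mu_m}(r\rho)e^{im(\theta-\theta')}$ to be uniformly bounded. This is the known representation of Fanelli--Felli--Fontelos--Primo~\cite{FFFP13}, where the sum is written through an explicit integral formula and shown bounded. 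I will cite it rather than reprove it; this step is the only genuinely magnetic input and the main obstacle. Negative times follow from conjugation, as in Remark~\ref{rem:negative-times}.

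\emph{Homogeneous and retarded estimates.} Combining the $L^1\to L^\infty$ bound with unitarity, Keel--Tao in dimension $2$ (decay rate $\sigma=1$) gives \eqref{eq:homogeneous-Strichartz} for all non-endpoint admissible pairs, which is why $(2,\infty)$ is excluded. Keel--Tao also gives the inhomogeneous estimate $\|\int_{s>t}e^{-i(t-s)H_\alpha}G\|_{L^q L^r}\lesssim\|G\|_{L^{\tilde q'}L^{\tilde r'}}$ for any two non-endpoint admissible pairs; the Christ--Kiselev restriction to $s>t$ is valid since $\tilde q'<q$ or the pair is handled directly by Keel--Tao's truncated bilinear form. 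Apply this with target pairs $(\infty,2)$ and $(4,4)$, and source pair $(\tilde q,\tilde r)=(q_p,p')$. Then $p'\in(2,\infty)$ for $1<p<2$, and admissibility $2/\tilde q=1-2/p'=2/p-1$ gives $\tilde q'=(1-(1-p/2)... )$. Explicitly, $1/\tilde q=1/p-1/2$, so $1/\tilde q'=3/2-1/p$, i.e.\ $\tilde q'=2p/(3p-2)$. This matches \eqref{eq:mixed-Strichartz-exponents} and proves \eqref{eq:mixed-retarded-Strichartz}.

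\emph{The $L^1L^2$ case.} Estimate \eqref{eq:retarded-Strichartz} is the dual pair $(\infty,2)$. It can be obtained directly from Minkowski's inequality: $\|\int_t e^{-i(t-s)H_\alpha}G(s)\,ds\|_2\le\|G\|_{L^1L^2}$ by unitarity. For the $L^4L^4$ norm, write the integral as $\int \mathbf 1_{s>t}\,e^{-itH_\alpha}(e^{isH_\alpha}G(s))\,ds$, apply \eqref{eq:homogeneous-Strichartz} for each $s$, and use Minkowski again. Constants depend only on $\alpha$ through the dispersive bound.
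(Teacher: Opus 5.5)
Your proposal is correct and follows the paper's route: you use the $|t|^{-1}$ dispersive bound of Fanelli--Felli--Fontelos--Primo, which you unpack via the factorization as $L^1\to L^\infty$ boundedness of $\mathcal F_\alpha$ (this is exactly how that reference proves it), and then apply Keel--Tao with source pair $(2p/(2-p),p')$ and target pairs $(\infty,2)$ and $(4,4)$. The only deviation is that you obtain \eqref{eq:retarded-Strichartz} directly from Minkowski's inequality plus the homogeneous estimate, rather than from Keel--Tao with source pair $(\infty,2)$; this is equally valid, and the unfinished expression for $\tilde q'$ just before your explicit computation should simply be deleted.
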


\begin{proof}
  The Aharonov--Bohm propagator has the sharp estimate
  \begin{equation}
    \label{eq:AB-dispersive}
    \|e^{-itH_\alpha}f\|_\infty
    \leq
    C_\alpha |t|^{-1}\|f\|_1.
  \end{equation}
  This is Theorem~1.9 of~\cite{FFFP13}.
  The Keel--Tao theorem~\cite{KT98}, unitarity, and the usual
  duality argument give the homogeneous and 
  inhomogeneous estimates
  \eqref{eq:mixed-retarded-Strichartz} and
  \eqref{eq:retarded-Strichartz}.  For
  \eqref{eq:mixed-retarded-Strichartz}, the pair
  $(2p/(2-p),p')$ is admissible and its time dual is
  $q_p'=2p/(3p-2)$.  Use this source pair and the two target
  pairs $(\infty,2)$ and $(4,4)$.  Taking instead the source
  pair $(\infty,2)$ gives
  \eqref{eq:retarded-Strichartz}.
\end{proof}

\subsection{Global \hyp{L^2} theory}
\label{sec:L2-theory}

For completeness, we recall the standard construction
of global solutions for \eqref{eq:main-NLS}.

\begin{proposition}[Global $L^2$ wellposedness]
\label{prop:global-L2-theory}
  For every $u_0\in L^2(\mathbb R^2)$ and
  $\lambda\in\mathbb R$, equation \eqref{eq:main-NLS} has a
  unique global solution
  \begin{equation}
    \label{eq:L2-solution-class}
    u\in C(\mathbb R;L^2)
    \cap
    L^6_{\mathrm{loc}}(\mathbb R;L^3).
  \end{equation}
  Moreover,
  \begin{equation}
    \label{eq:mass-conservation}
    \|u(t)\|_2
    =
    \|u_0\|_2.
  \end{equation}
\end{proposition}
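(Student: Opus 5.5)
The plan is the standard $L^2$ subcritical argument of Tsutsumi type, built on the Strichartz estimates of Proposition~\ref{prop:Strichartz}. The nonlinearity $|u|u$ is quadratic, hence $L^2$-subcritical in two dimensions (the mass-critical power would be cubic). We write \eqref{eq:main-NLS} in Duhamel form
\begin{equation*}
  u(t)=e^{-itH_\alpha}u_0-i\lambda\int_0^te^{-i(t-s)H_\alpha}\bigl(|u|u\bigr)(s)\,ds,
\end{equation*}
and solve it by contraction in
\begin{equation*}
  X_T=C([-T,T];L^2)\cap L^6([-T,T];L^3).
\end{equation*}
The pair $(6,3)$ is admissible in the sense of \eqref{eq:admissible}, since $2/6+2/3=1$. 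The retarded (forward-in-time) inhomogeneous estimates follow from Keel--Tao exactly as in the proof of Proposition~\ref{prop:Strichartz}. We take the dual source pair $(6/5,3/2)$, that is, the dual of $(6,3)$, and targets $(\infty,2)$ and $(6,3)$.

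The nonlinear estimate comes from H\"older's inequality. Pointwise in time, $\||u|u\|_{3/2}\le\|u\|_3^2$. In time, we need $L^{6/5}$ while $\|u\|_3^2\in L^3_t$, so H\"older on $[-T,T]$ gives
\begin{equation*}
  \||u|u\|_{L^{6/5}_TL^{3/2}}\le (2T)^{1/2}\|u\|_{L^6_TL^3}^2 .
\end{equation*}
Similarly, for differences we use
\begin{equation*}
  \bigl||u|u-|v|v\bigr|\le 2(|u|+|v|)|u-v|,
\end{equation*}
which yields the bound $CT^{1/2}(\|u\|_{L^6L^3}+\|v\|_{L^6L^3})\|u-v\|_{L^6L^3}$. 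With $R=2C\|u_0\|_2$, the Duhamel map is therefore a contraction on the ball of radius $R$ in $X_T$ once $T\sim\|u_0\|_2^{-2}$. This gives local existence, with a time of existence depending only on $\|u_0\|_2$. Uniqueness in the class \eqref{eq:L2-solution-class} follows from the same difference estimate on short intervals, combined with a continuity argument.

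Mass conservation \eqref{eq:mass-conservation} is the only step that is not fully routine, because $L^2$ solutions are not regular enough to justify the formal computation $\frac{d}{dt}\|u\|_2^2=2\operatorname{Im}\lambda\int|u|^3=0$. I would handle this by regularization. We approximate $u_0$ by data in $\dom(H_\alpha)$, or we apply the resolvent regularization $J_\varepsilon=(1+\varepsilon H_\alpha)^{-1}$ to the equation. One then checks the identity $\frac{d}{dt}\|J_\varepsilon u\|_2^2=2\operatorname{Im}\lambda\langle J_\varepsilon(|u|u),J_\varepsilon u\rangle$, which is legitimate since $J_\varepsilon u$ lies in $\dom(H_\alpha)$ and $H_\alpha$ is selfadjoint. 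Letting $\varepsilon\to0$, and using the $L^{6/5}L^{3/2}$--$L^6L^3$ duality pairing to pass to the limit, gives $\operatorname{Im}\lambda\int|u|^3=0$ in integrated form. This is the Ozawa-style argument, which works directly in the Strichartz class.

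Finally, since the local time depends only on $\|u_0\|_2$ and the mass is conserved, the solution can be iterated to all of $\mathbb R$. This yields the global solution in \eqref{eq:L2-solution-class}.
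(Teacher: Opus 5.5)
Your proposal is correct and follows the paper's proof essentially step by step. You use the same contraction in $C(I;L^2)\cap L^6(I;L^3)$, built on $\||u|u\|_{L^{6/5}(I;L^{3/2})}\le|I|^{1/2}\|u\|_{L^6(I;L^3)}^2$ and the pointwise difference bound, mass conservation by regularization (the paper invokes smooth-data approximation or Ozawa's argument), and iteration with the $\|u_0\|_2$-dependent lifespan.

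Of your two regularizations, prefer the resolvent one (or Ozawa's direct Duhamel pairing). Persistence of $\dom(H_\alpha)$ regularity is delicate here: $|u|u$ leaves $\dom(H_\alpha)$ as soon as $u$ has a nonzero trace of order $\nu_\alpha\le 1/2$. The resolvent route only needs $(1+\varepsilon H_\alpha)^{-1}\colon L^{3/2}\to L^2$ (dual to $\dom(H_\alpha)\hookrightarrow L^\infty$), together with a uniform $L^3$ bound, e.g.\ from heat kernel domination.
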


\begin{proof}
  The pair $(6,3)$ is Schr\"odinger admissible in dimension
  two.  On a bounded interval $I$, Strichartz and H\"older give
  \begin{equation}
    \label{eq:local-subcritical-estimate}
    \||u|u\|_{L^{6/5}(I;L^{3/2})}
    \leq
    |I|^{1/2}
    \|u\|_{L^6(I;L^3)}^2.
  \end{equation}
  The analogous difference estimate follows from
  \begin{equation*}
    \bigl||u|u-|v|v\bigr|
    \leq
    (|u|+|v|)|u-v|.
  \end{equation*}
  A contraction in
  $C(I;L^2)\cap L^6(I;L^3)$ gives local existence and
  uniqueness, with a lifespan depending only on
  $\|u_0\|_2$.

  For smooth data, multiplication of
  \eqref{eq:main-NLS} by $\overline u$, integration, and taking
  the imaginary part gives \eqref{eq:mass-conservation}, because
  $H_\alpha$ is selfadjoint and $\lambda$ is real.
  Approximation extends the identity to $L^2$ data.  The local
  lifespan can therefore be restarted uniformly, which proves
  global existence.  We may also follow the conservation law
  argument in Ozawa~\cite{Ozawa06}.
\end{proof}

\section{The pseudoconformal computation}
\label{sec:pseudoconformal}

The exact identity in this section is the core of the nonlinear
argument.

\begin{lemma}[Magnetic pseudoconformal identity]
\label{lem:pseudoconformal}
  Let $0<\alpha<1$, let $I\subset(0,\infty)$ be an interval,
  and assume
  $W\in C(I;\dom(H_\alpha))\cap C^1(I;L^2)$, where the first
  space has the graph topology.  Then, in
  $\mathcal D'(I\times\mathbb R^2)$,
  \begin{equation}
    \label{eq:pseudoconformal-identity}
    (i\partial_t-H_\alpha)
    M(t)D(t)W(t)
    =
    M(t)D(t)
    \left(
      i\partial_tW(t)
      -
      \frac1{4t^2}H_\alpha W(t)
    \right).
  \end{equation}
\end{lemma}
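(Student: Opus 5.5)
Our plan is to prove the identity pointwise away from the pole for smooth data and then pass to distributions. Write $u(t,x)=M(t)D(t)W(t)=\frac{1}{2it}e^{i|x|^2/(4t)}W(t,x/(2t))$. In polar coordinates $H_\alpha$ is given by \eqref{eq:AB-polar}, so it is the magnetic Laplacian $-(\nabla-iA_\alpha)^2$ written differently. The key algebraic facts are the following. First, $A_\alpha$ is homogeneous of degree $-1$, so $D(t)$ intertwines it as $H_\alpha D(t)=(2t)^{-2}D(t)H_\alpha$. Second, $A_\alpha\cdot x=0$, so the Gaussian phase commutes with the magnetic part except through the ordinary gradient. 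Concretely we compute, for $v$ smooth and supported away from $0$,
\begin{equation*}
  H_\alpha\bigl(e^{i|x|^2/(4t)}v\bigr)
  =
  e^{i|x|^2/(4t)}
  \Bigl(
    H_\alpha v
    -\frac{i}{t}\,x\cdot\nabla v
    -\frac{i}{t}v
    +\frac{|x|^2}{4t^2}v
  \Bigr).
\end{equation*}
Here the cross term $2A_\alpha\cdot\nabla(|x|^2/4t)$ vanishes by $A_\alpha\cdot x=0$; equivalently, in polar form only the radial derivatives see the phase.

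On the time side, differentiating the prefactor $\frac{1}{2it}$, the phase $e^{i|x|^2/4t}$, and the dilation $x/(2t)$ produces
\begin{equation*}
  i\partial_t u
  =
  M D\Bigl(
    -\frac{i}{t}W
    +\frac{|x|^2}{4t^2}\text{-term}
    -\frac{i}{t}\,y\cdot\nabla_y W
    + i\partial_tW
  \Bigr),
\end{equation*}
where $y=x/(2t)$, with appropriate signs. Combining the two computations, the terms $-\frac{i}{t}W$, $-\frac{i}{t}y\cdot\nabla W$ and the $|x|^2/(4t^2)$ terms cancel exactly, as in the flat pseudoconformal identity. What remains is $M D(i\partial_tW-(4t^2)^{-1}H_\alpha W)$, where the factor $1/(4t^2)$ comes from the dilation scaling. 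This is a routine bookkeeping computation, and we will check signs directly in polar coordinates, in which $x\cdot\nabla=r\partial_r$ and the angular term is untouched.

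For the general statement we argue by density. The subtle point is the pole, since elements of $\dom(H_\alpha)$ have traces $r^{\mu_m}$ there, so we cannot work with $C_c^\infty(\mathbb R^2\setminus\{0\})$ in the $W$ variable alone. Instead we test against $\varphi\in C_c^\infty(I\times\mathbb R^2)$ in the weak form and use selfadjointness, which avoids pointwise derivatives at $0$:
\begin{equation*}
  \langle (i\partial_t-H_\alpha)u,\varphi\rangle
  =
  \langle u,(i\partial_t-H_\alpha)\varphi\rangle .
\end{equation*}
Since the paper's test functions need not lie in $\dom(H_\alpha)$, we interpret $H_\alpha u$ via the operator. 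This is legitimate because $M(t)$ preserves $\dom(H_\alpha)$: multiplying by the smooth radial phase $e^{ir^2/4t}=1+O(r^2)$ keeps every trace coefficient in \eqref{eq:full-trace-decomposition} and maps $\mathcal Y$ into itself locally. The same holds for $D(t)$, which is a dilation. Hence for each $t$ the computation above holds as an identity in $L^2$, with the first-order term $x\cdot\nabla v$ lying in $L^2$ for $v\in\dom(H_\alpha)$.

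Finally, the hypothesis $W\in C(I;\dom H_\alpha)\cap C^1(I;L^2)$ makes both sides continuous in $t$ with values in $L^2$ plus a distribution. The time derivative of $MDW$ exists in the distributional sense by the product rule, because $M,D$ are strongly differentiable on the relevant domains. The main obstacle is precisely this justification that $M(t)$ preserves $\dom(H_\alpha)$ and that $x\cdot\nabla W\in L^2$ near the pole. We will handle it first using the decomposition \eqref{eq:full-trace-decomposition}. For the trace terms, $r\partial_r(r^{\mu})=\mu r^{\mu}$ explicitly. For the $\mathcal Y$ part we use the minimal-domain norm, or, if that is inconvenient, approximate $W$ in graph norm by functions of the form $\psi+$traces with $\psi$ smooth and vanishing near $0$, and pass to the limit in $\mathcal D'$.
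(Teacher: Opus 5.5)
Your pointwise computation for smooth $W$ supported away from the pole is correct and coincides with the paper's: homogeneity of $A_\alpha$, $x\cdot A_\alpha=0$, and cancellation of the $-\frac it W$, $-\frac it\,y\cdot\nabla_yW$ and $|y|^2W$ terms. The gap is in the extension step. You claim that $M(t)$ preserves $\dom(H_\alpha)$ and that $x\cdot\nabla v\in L^2$ for $v\in\dom(H_\alpha)$. From this you conclude that the identity holds in $L^2$ at each fixed $t$, with $H_\alpha u(t)$ given by the operator.

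This is false. The failure is not at the pole, where your argument is fine ($e^{ir^2/(4t)}-1=O(r^2)$ preserves the traces and $\mathcal Y$ locally, as you yourself note). It is at spatial infinity, because $\dom(H_\alpha)$ carries no weights. Take $v=(1-\chi)\langle r\rangle^{-3/2}\cos r$. It lies in $\mathcal Y\subset\dom(H_\alpha)$, yet $x\cdot\nabla v\sim r^{-1/2}\sin r$ and $|x|^2v\sim r^{1/2}\cos r$ are not in $L^2(\mathbb R^2)$. By your own commutator formula, $H_\alpha(M(t)v)\notin L^2$, so $M(t)v\notin\dom(H_\alpha)$.

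Hence, for general $W$, $u(t)=M(t)D(t)W(t)$ is not in $\dom(H_\alpha)$, and neither $i\partial_tu(t)$ nor $H_\alpha u(t)$ is an $L^2$ function. The terms $|y|^2W$ and $y\cdot\nabla_yW$ cancel only in the difference $(i\partial_t-H_\alpha)u$. For the same reason $M(t)$ and $D(t)$ are not strongly differentiable on $\dom(H_\alpha)$, so your product-rule step fails as phrased. The duality with $(i\partial_t-H_\alpha)\varphi$ does not rescue this either: a test function with $\varphi(0)\neq0$ is not even in the form domain, since $|A_\alpha|^2\varphi\sim r^{-2}$.

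The missing idea is localization. The conclusion is only claimed in $\mathcal D'(I\times\mathbb R^2)$, so you should work with $\zeta u$, where $\zeta\in C_c^\infty(\mathbb R^2)$ is radial and equal to one on the spatial support of the test function. Multiplication by the smooth, compactly supported radial function $\zeta e^{i|x|^2/(4t)}$ does preserve $\dom(H_\alpha)$, by your trace argument together with locality of $H_\alpha$ on its domain. The time derivative should be handled weakly, by pairing $W(t)$ with $D(t)^*M(t)^*\varphi(t)$, which is $C^1$ in $t$ with compact support; no weighted differentiability is then needed.

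Alternatively, follow the paper's route:
\begin{itemize}
\item prove the identity for frozen $W\equiv V$ in the core $\mathcal C_\alpha^{\mathrm F}$, checking for the trace vectors that the boundary terms on $r=\varepsilon$ are $O(\varepsilon^\mu)$, so that no distribution at the pole is created;
\item extend to all $V\in\dom(H_\alpha)$ by graph-norm approximation, where the right-hand side converges in $L^2$ by unitarity of $M(t)D(t)$;
\item finally add $iM(t)D(t)\partial_tW$.
\end{itemize}
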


\begin{proof}
  We first assume that $W$ is smooth and supported away from
  the pole.  Put $y=x/(2t)$ and
  $P_\alpha=-i\nabla+A_\alpha$.  The two relevant geometric
  identities are
  \begin{equation}
    \label{eq:A-homogeneous}
    A_\alpha(x)
    =
    \frac1{2t}A_\alpha(y)
  \end{equation}
  and
  \begin{equation}
    \label{eq:A-tangential}
    y\cdot A_\alpha(y)
    =
    0.
  \end{equation}
  Since
  $-i\nabla_x e^{i|x|^2/(4t)}
  =
  (x/(2t))e^{i|x|^2/(4t)}$, we have
  \begin{equation}
    \label{eq:first-covariant-calculation}
    P_{\alpha,x}
    \bigl(M(t)W(y)\bigr)
    =
    M(t)
    \left(
      yW+\frac1{2t}P_{\alpha,y}W
    \right).
  \end{equation}
  Applying $P_{\alpha,x}$ once more and using
  \eqref{eq:A-tangential} gives
  \begin{equation}
    \label{eq:second-covariant-calculation}
    H_{\alpha,x}\bigl(M(t)W(y)\bigr)
    =
    M(t)\Bigl[
      |y|^2W
      +
      \frac1t y\cdot P_{\alpha,y}W
      -
      \frac{i}{t}W
      +
      \frac1{4t^2}H_{\alpha,y}W
    \Bigr].
  \end{equation}
  On the other hand, differentiating the dilation factor,
  the modulation, and $y=x/(2t)$ gives
  \begin{equation}
    \label{eq:time-calculation}
    i\partial_t\bigl(M(t)D(t)W(t,y)\bigr)
    =
    M(t)D(t)\Bigl[
      -\frac{i}{t}W
      +
      |y|^2W
      +
      i\partial_tW
      -
      \frac{i}{t}y\cdot\nabla_yW
    \Bigr].
  \end{equation}
  Because $y\cdot P_{\alpha,y}=-iy\cdot\nabla_y$,
  subtraction of \eqref{eq:second-covariant-calculation}
  from \eqref{eq:time-calculation} proves
  \eqref{eq:pseudoconformal-identity} in this case.

  It remains to cross the pole.  Let
  \begin{equation}
    \label{eq:full-Friedrichs-core}
    \mathcal C_\alpha^{\mathrm F}
    =
    C_c^\infty(\mathbb R^2\setminus\{0\})
    +
    \operatorname{span}
    \left\{
      \chi r^\alpha,
      \chi r^{1-\alpha}e^{-i\theta}
    \right\}
  \end{equation}
  where $\chi=\chi_{\mathrm{tr}}$ is a fixed radial cutoff
  equal to 1 near 0.
  Lemma~\ref{lem:full-minimal-domain} shows that
  $\mathcal C_\alpha^{\mathrm F}$ is an operator core for the
  Friedrichs realization.  The calculation above also applies
  distributionally to either trace vector.  Indeed, its local
  form is $r^\mu e^{im\theta}$ with $\mu>0$, and the boundary
  terms on $r=\varepsilon$ are $O(\varepsilon^\mu)$.
  They therefore vanish as $\varepsilon\downarrow0$, so no
  distribution supported at the pole is produced.

  Thus, for every fixed $V\in\mathcal C_\alpha^{\mathrm F}$,
  \eqref{eq:pseudoconformal-identity} holds for all $t>0$ with 
  $W(t)\equiv V$. Approximation in the
  $H_\alpha$ graph norm extends it to every
  $V\in\dom(H_\alpha)$: the factors $M(t)D(t)$ are unitary,
  and both $V$ and $H_\alpha V$ converge in $L^2$.  Finally,
  the $L^2$ product rule for the time dependent profile adds
  $iM(t)D(t)\partial_tW$.  Graph continuity of $W$ makes the
  right hand side locally continuous in $L^2$ and proves the
  claimed identity.
\end{proof}

The following nonlinear scaling identity is a routine check.

\begin{lemma}[Nonlinear scaling]
\label{lem:nonlinear-scaling}
  For every measurable $W$ and $t>0$,
  \begin{equation}
    \label{eq:nonlinear-scaling}
    |M(t)D(t)W|M(t)D(t)W
    =
    M(t)D(t)
    \left(\frac1{2t}|W|W\right).
  \end{equation}
\end{lemma}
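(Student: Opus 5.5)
The identity is pointwise in $(t,x)$, so I would verify it by direct substitution into the definitions \eqref{eq:MD-definition-intro}, with no functional analysis involved. Fix $t>0$ and $x\in\mathbb R^2$, and put $y=x/(2t)$. By definition,
\begin{equation*}
  M(t)D(t)W(x)
  =
  \frac{1}{2it}\,e^{i|x|^2/(4t)}\,W(y).
\end{equation*}
The first step is to compute the modulus. Since $|e^{i|x|^2/(4t)}|=1$ and $|1/(2it)|=1/(2t)$ for $t>0$, we get
\begin{equation*}
  |M(t)D(t)W(x)|
  =
  \frac1{2t}|W(y)|.
\end{equation*}
This holds for every measurable $W$ and every $x$, with no integrability required.

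The second step is to multiply this modulus by $M(t)D(t)W(x)$. Because $|W(y)|/(2t)$ is a real scalar, it commutes with the unimodular factor and with the prefactor $1/(2it)$. The product therefore equals
\begin{equation*}
  \frac{1}{2it}\,e^{i|x|^2/(4t)}
  \left(\frac1{2t}|W|W\right)(y),
\end{equation*}
and this is precisely $M(t)D(t)$ applied to the function $\frac1{2t}|W|W$, evaluated at $x$. That gives \eqref{eq:nonlinear-scaling}.

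The argument has no real obstacle. The one point to note is that the identity depends on the nonlinearity being gauge invariant and positively homogeneous of degree two: $|cz|cz=|c|\,c\,|z|z$ for every $c\in\mathbb C$. Here $c=e^{i|x|^2/(4t)}/(2it)$ and $|c|=1/(2t)$, which accounts for the factor $1/(2t)$. The requirement $t>0$ is used only so that $|1/(2it)|=1/(2t)$; for $t<0$ the same computation gives $1/(2|t|)$, consistent with Remark~\ref{rem:negative-times}.
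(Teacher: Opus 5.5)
Your proof is correct and matches the paper, which calls this identity a routine check and omits the proof. The direct pointwise substitution into \eqref{eq:MD-definition-intro}, using only $|e^{i|x|^2/(4t)}/(2it)|=1/(2t)$, is exactly that check.
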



\begin{definition}[Admissible profile]
\label{def:admissible-profile}
  A function $\phi\in L^2\cap L^\infty$ is admissible
  for $(H_\alpha,\lambda)$ if, with
  \begin{equation}
    \label{eq:W-definition}
    W(t)
    =
    \phi
    \exp\left(
      -\frac{i\lambda}{2}|\phi|\log t
    \right),
    \qquad t\geq1,
  \end{equation}
  one has
  $W\in C([1,\infty);\dom(H_\alpha))$ for the graph topology
  and
  \begin{equation}
    \label{eq:profile-admissibility}
    \|H_\alpha W(t)\|_2
    \leq
    C_\phi(1+\log t)^2.
  \end{equation}
\end{definition}

\begin{definition}[Profile space]
\label{def:pole-compatible-space}
  Let
  \begin{equation}
    \label{eq:pole-compatible-space}
    \mathcal Y
    =
    \operatorname{cl}_{\|\cdot\|_{\mathcal Y}}
    C_c^\infty(\mathbb R^2\setminus\{0\}),
  \end{equation}
  where
  \begin{equation}
    \label{eq:pole-compatible-norm}
    \|\phi\|_{\mathcal Y}
    =
    \|\phi\|_{H^2}
    +
    \left\|
      \frac{\nabla\phi}{|x|}
    \right\|_2
    +
    \left\|
      \frac{\phi}{|x|^2}
    \right\|_2.
  \end{equation}
  Thus every element of $\mathcal{Y}$ belongs to $H^2$ 
  and has the two displayed
  weighted derivatives.  The space contains every annular
  $H^2$ profile and, for example, every function
  $|x|^2\psi$ with $\psi\in C_c^\infty(\mathbb R^2)$.
\end{definition}

\begin{lemma}[Compatible profiles]
\label{lem:concrete-profiles}
  Let $\phi\in\mathcal Y$ and $R=\|\phi\|_\infty$.  
  Then $\phi$ is admissible and
  \begin{equation}
    \label{eq:generic-profile-bound}
    \|H_\alpha W(t)\|_2
    \leq
    C_\alpha
    \bigl[
      1
      +
      |\lambda|R\log t
      +
      \lambda^2R^2(\log t)^2
    \bigr]
    \|\phi\|_{\mathcal Y}.
  \end{equation}
\end{lemma}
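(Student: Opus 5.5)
The plan is to show that, for $\phi\in\mathcal Y$, the function $W(t)=G_\gamma(\phi)$ with $\gamma=\lambda\log t/2$ stays in $\mathcal Y$, with $\|G_\gamma(\phi)\|_{\mathcal Y}\lesssim(1+|\gamma|R+\gamma^2R^2)\|\phi\|_{\mathcal Y}$. We then combine this with the bound $\|H_\alpha\psi\|_2\leq C_\alpha\|\psi\|_{\mathcal Y}$ on $\mathcal Y\subset\dom(H_\alpha)$. That embedding is immediate on $C_c^\infty(\mathbb R^2\setminus\{0\})$: expanding
\[
H_\alpha=-\Delta+2iA_\alpha\cdot\nabla+|A_\alpha|^2,
\qquad
|A_\alpha|=\frac{\alpha}{|x|},
\]
gives $|H_\alpha\psi|\leq|\Delta\psi|+2\alpha|\nabla\psi|/|x|+\alpha^2|\psi|/|x|^2$. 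It extends to $\mathcal Y$ by closedness of $H_\alpha$, since $C_c^\infty(\mathbb R^2\setminus\{0\})$ lies in the Friedrichs domain. Note also that $\mathcal Y\subset H^2\subset L^\infty$, so $R<\infty$.

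For the estimate on $G_\gamma(\phi)$, we first work with $\phi\in C_c^\infty(\mathbb R^2\setminus\{0\})$ and compute pointwise away from the zeros of $\phi$. Writing $E=e^{-i\gamma|\phi|}$, we have
\[
\nabla(\phi E)=E\bigl(\nabla\phi-i\gamma\phi\nabla|\phi|\bigr),
\]
and $|\phi\nabla|\phi||\leq R|\nabla\phi|$ because $|\nabla|\phi||\leq|\nabla\phi|$. This controls $\nabla G_\gamma(\phi)/|x|$ by $(1+|\gamma|R)|\nabla\phi|/|x|$. The zeroth order weighted term satisfies $|G_\gamma(\phi)|=|\phi|$.

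For second derivatives,
\[
\partial^2(\phi E)=E\bigl[\partial^2\phi-2i\gamma\,\partial\phi\,\partial|\phi|-i\gamma\phi\,\partial^2|\phi|-\gamma^2\phi(\partial|\phi|)^2\bigr].
\]
The first, second and fourth terms are bounded by $(1+|\gamma|R+\gamma^2R^2)$ times $|\partial^2\phi|+|\nabla\phi|^2$. The problematic term is $\phi\,\partial^2|\phi|$. Writing $|\phi|=(\phi\bar\phi)^{1/2}$, one gets $|\phi|\,|\partial^2|\phi||\lesssim|\partial^2\phi||\phi|+|\nabla\phi|^2$, so this term is also harmless.

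This leaves the quadratic gradient term $|\gamma|\,|\nabla\phi|^2$ (and $\gamma^2R|\nabla\phi|^2$). Here we use Gagliardo--Nirenberg in two dimensions: $\|\nabla\phi\|_4^2\lesssim\|\phi\|_\infty\|\Delta\phi\|_2$. This yields $|\gamma|R\|\phi\|_{H^2}$ and $\gamma^2R^2\|\phi\|_{H^2}$, which matches \eqref{eq:generic-profile-bound}. I expect this step, together with making the computation rigorous across the zero set of $\phi$, to be the main obstacle. To handle the zeros, I would regularize with $|\phi|_\varepsilon=(|\phi|^2+\varepsilon^2)^{1/2}$, obtain uniform bounds, and pass to the limit $\varepsilon\to0$ in the weak sense.

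Finally, for general $\phi\in\mathcal Y$, we approximate by $\phi_n\in C_c^\infty(\mathbb R^2\setminus\{0\})$ in the $\mathcal Y$ norm. Then $\phi_n\to\phi$ uniformly, since $H^2\subset L^\infty$, so $G_\gamma(\phi_n)\to G_\gamma(\phi)$ in $L^2$ with bounded $\mathcal Y$ norms. Weak compactness and closedness of $H_\alpha$ then give $G_\gamma(\phi)\in\dom(H_\alpha)$ with the bound \eqref{eq:generic-profile-bound}.

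Graph continuity of $t\mapsto W(t)$ on $[1,\infty)$ follows the same pattern. $L^2$ continuity is clear. The graph norm of $W(t)-W(s)$ is handled by dominated convergence in the explicit derivative formula, where all coefficients $e^{-i\gamma|\phi|}$ and $\gamma$ vary continuously and the integrands are dominated by the $L^2$ functions above. Together with the bound this gives admissibility in the sense of Definition~\ref{def:admissible-profile}.
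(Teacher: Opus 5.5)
Your proposal is correct and follows the paper's proof. Both use the $H^2$ chain-rule bound closed by Gagliardo--Nirenberg, the pointwise weighted-gradient bound, $|G_\gamma(\phi)|=|\phi|$, and the expansion of $H_\alpha$ on the punctured plane, followed by approximation from $C_c^\infty(\mathbb R^2\setminus\{0\})$.

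The differences are only technical. You treat the zero set through the regularization $|\phi|_\varepsilon$ rather than the $W^{2,\infty}_{\mathrm{loc}}$ chain rule. You also get $G_\gamma(\phi)\in\dom(H_\alpha)$ from weak compactness and closedness of $H_\alpha$, rather than from strong convergence in $\mathcal Y$ plus the Friedrichs form characterization. This spares you the strong $\mathcal Y$-continuity of $G_\gamma$ that the paper invokes. The first-order term should read $-2iA_\alpha\cdot\nabla$, a sign slip that is harmless for your estimate.
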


\begin{proof}
  Put $\sigma=\log t$, $\gamma=\lambda\sigma/2$, and regard
  $G_\gamma$ from \eqref{eq:G-gamma} as a map from
  $\mathbb R^2$ to $\mathbb R^2$.  Although
  $G_\gamma$ need not be twice continuously differentiable at
  zero, it belongs to $W^{2,\infty}_{\mathrm{loc}}$ and
  \begin{align}
    \label{eq:G-first-derivative}
    \sup_{|z|\leq R}|DG_\gamma(z)|
    &\leq
    C(1+|\gamma|R),
    \\
    \label{eq:G-second-derivative}
    \mathop{\rm ess\,sup}_{|z|\leq R}
    |D^2G_\gamma(z)|
    &\leq
    C\bigl(
      |\gamma|+\gamma^2R
    \bigr).
  \end{align}
  Indeed, away from zero these estimates follow by
  differentiating $z e^{-i\gamma|z|}$.  The first derivative
  has a limit at zero, while the second derivatives are
  bounded there.  Approximation at the origin gives the stated
  weak derivative bounds.

  The Sobolev chain rule, first for smooth approximations and
  then by passage to the limit, gives
  \begin{align}
    \label{eq:G-composition-first}
    \|\nabla G_\gamma(\phi)\|_2
    &\leq
    C(1+|\gamma|R)\|\nabla\phi\|_2,
    \\
    \label{eq:G-composition-second}
    \|D^2G_\gamma(\phi)\|_2
    &\leq
    C(1+|\gamma|R)\|D^2\phi\|_2
    +
    C\bigl(
      |\gamma|+\gamma^2R
    \bigr)
    \|\nabla\phi\|_4^2.
  \end{align}
  In two dimensions, the Gagliardo--Nirenberg estimate
  \begin{equation}
    \label{eq:GN-gradient}
    \|\nabla\phi\|_4^2
    \leq
    C\|\phi\|_\infty\|D^2\phi\|_2
  \end{equation}
  turns \eqref{eq:G-composition-second} into
  \begin{equation}
    \label{eq:G-H2-bound}
    \|G_\gamma(\phi)\|_{H^2}
    \leq
    C\bigl(
      1+|\gamma|R+\gamma^2R^2
    \bigr)
    \|\phi\|_{H^2}.
  \end{equation}
  The first derivative bound also holds pointwise almost
  everywhere, and hence
  \begin{equation}
    \label{eq:G-weighted-gradient}
    \left\|
      \frac{\nabla G_\gamma(\phi)}{|x|}
    \right\|_2
    \leq
    C(1+|\gamma|R)
    \left\|
      \frac{\nabla\phi}{|x|}
    \right\|_2.
  \end{equation}
  Moreover, $|G_\gamma(\phi)|=|\phi|$.  On the punctured plane,
  \begin{equation}
    \label{eq:Halpha-expanded}
    H_\alpha
    =
    -\Delta
    -
    2iA_\alpha\cdot\nabla
    +
    |A_\alpha|^2.
  \end{equation}
  Therefore
  \begin{equation}
    \label{eq:Halpha-weighted-bound}
    \|H_\alpha W(t)\|_2
    \leq
    \|\Delta W(t)\|_2
    +
    2|\alpha|
    \left\|
      \frac{\nabla W(t)}{|x|}
    \right\|_2
    +
    \alpha^2
    \left\|
      \frac{W(t)}{|x|^2}
    \right\|_2.
  \end{equation}
  Estimates \eqref{eq:G-H2-bound} and
  \eqref{eq:G-weighted-gradient} prove the required estimate.
  Choose $\phi_n\in C_c^\infty(\mathbb R^2\setminus\{0\})$
  converging to $\phi$ in $\mathcal Y$.  The same composition
  estimates, together with continuity of $G_\gamma$ in the
  $\mathcal Y$-topology (easily checked), imply
  $G_\gamma(\phi_n)\to G_\gamma(\phi)$ in $\mathcal Y$.
  Hence $G_\gamma(\phi)\in\mathcal Y$.
  In particular, $W(t)/|x|\in L^2$: use the last term in
  \eqref{eq:pole-compatible-norm} on $|x|<1$ and the $L^2$
  norm on $|x|\geq1$.  Together with
  $\nabla W(t)\in L^2$, this shows that $W(t)$ lies in the
  magnetic form domain \eqref{eq:Friedrichs-form-domain}.
  Estimate
  \eqref{eq:Halpha-weighted-bound} shows that the distribution
  $H_\alpha W(t)$ lies in $L^2$.
  By the Friedrichs form characterization (see~\cite{FFFP15}),
  a function in the magnetic form domain whose distributional
  $H_\alpha$-image belongs to $L^2$ lies in
  $\dom(H_\alpha)$.  Hence $W(t)\in\dom(H_\alpha)$.
  Applying the same composition estimates to
  $\gamma_n\to\gamma$ gives continuity in the graph norm.
  Thus $t\mapsto W(t)$ is graph continuous.
\end{proof}

\begin{proposition}[Approximate solution]
\label{prop:approximate-solution}
  Let $\phi$ be admissible in the sense of
  Definition~\ref{def:admissible-profile}, and define
  \begin{equation}
    \label{eq:uap-definition}
    u_{\mathrm{ap}}(t)
    =
    M(t)D(t)W(t).
  \end{equation}
  Then
  \begin{equation}
    \label{eq:residual-definition}
    R(t)
    :=
    (i\partial_t-H_\alpha)u_{\mathrm{ap}}
    -
    \lambda|u_{\mathrm{ap}}|u_{\mathrm{ap}}
    =
    -\frac1{4t^2}M(t)D(t)H_\alpha W(t).
  \end{equation}
  In particular,
  \begin{equation}
    \label{eq:residual-L2}
    \|R(t)\|_2
    \leq
    \frac{C_\phi}{4t^2}(1+\log t)^2.
  \end{equation}
\end{proposition}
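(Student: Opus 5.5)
The plan is to combine Lemma~\ref{lem:pseudoconformal} with Lemma~\ref{lem:nonlinear-scaling} and the pointwise profile equation \eqref{eq:intro-profile-ODE}; the only genuine work is checking that $W$ meets the hypotheses of the pseudoconformal lemma.

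First I verify those hypotheses on $I=[1,\infty)$, or on any compact subinterval. Admissibility in Definition~\ref{def:admissible-profile} gives $W\in C([1,\infty);\dom(H_\alpha))$ in the graph topology directly. For $W\in C^1(I;L^2)$, differentiate pointwise:
\[
\partial_t W=-\frac{i\lambda}{2t}|\phi|\,W .
\]
Since $|W|=|\phi|$ and $\phi\in L^\infty$, the right side lies in $L^2$ with norm at most $\frac{|\lambda|}{2t}\|\phi\|_\infty\|\phi\|_2$. The difference quotients converge to it in $L^2$ by dominated convergence, because
\[
\bigl|e^{-ia(s)}-e^{-ia(t)}\bigr|\le |a(s)-a(t)|, \qquad a(t)=\tfrac{\lambda}{2}|\phi|\log t,
\]
and continuity in $t$ follows in the same way. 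The same formula shows that $W$ solves \eqref{eq:intro-profile-ODE} in $L^2$:
\[
i\partial_tW=\frac{\lambda}{2t}|\phi|W=\frac{\lambda}{2t}|W|W.
\]

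Next I apply Lemma~\ref{lem:pseudoconformal}, which gives
\[
(i\partial_t-H_\alpha)u_{\mathrm{ap}}=M(t)D(t)\Bigl(\frac{\lambda}{2t}|W|W-\frac1{4t^2}H_\alpha W\Bigr).
\]
By Lemma~\ref{lem:nonlinear-scaling} and linearity of $M(t)D(t)$, the first term equals $\lambda|u_{\mathrm{ap}}|u_{\mathrm{ap}}$. Subtracting it yields \eqref{eq:residual-definition} as an identity in $\mathcal D'$. Because the right side lies in $C(I;L^2)$, the identity also holds in that space.

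Finally, $M(t)$ and $D(t)$ are unitary on $L^2$, so
\[
\|R(t)\|_2=\frac{1}{4t^2}\|H_\alpha W(t)\|_2\le\frac{C_\phi}{4t^2}(1+\log t)^2
\]
by \eqref{eq:profile-admissibility}. Note that Definition~\ref{def:admissible-profile} only defines $W$ for $t\ge1$, so the statement is to be read for $t\ge1$; this is harmless, since the estimate is only used at large times. No step is a real obstacle: the $C^1(I;L^2)$ check is the only point needing a short argument, and it is the dominated-convergence bound above.
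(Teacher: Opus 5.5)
Your proposal is correct and follows the paper's proof essentially step by step. You verify the hypotheses of Lemma~\ref{lem:pseudoconformal}: graph continuity comes from admissibility, and $C^1(I;L^2)$ comes from the explicit equation $i\partial_tW=\frac{\lambda}{2t}|W|W$. You then combine the lemma with Lemma~\ref{lem:nonlinear-scaling} and use unitarity of $M(t)D(t)$ together with \eqref{eq:profile-admissibility}; your dominated-convergence argument only spells out what the paper asserts in one line.
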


\begin{proof}
  Direct differentiation of \eqref{eq:W-definition} gives
  \begin{equation}
    \label{eq:W-ODE}
    i\partial_tW
    =
    \frac{\lambda}{2t}|W|W.
  \end{equation}
  Since $\phi\in L^2\cap L^\infty$, equation
  \eqref{eq:W-ODE} gives
  $\partial_tW\in C([1,\infty);L^2)$.  
  By admissibility, $W$ satisfies the hypotheses of
  Lemma~\ref{lem:pseudoconformal}, so the
  magnetic pseudoconformal identity applies directly.
  Combining Lemmas~\ref{lem:pseudoconformal} and
  ~\ref{lem:nonlinear-scaling} gives
  \eqref{eq:residual-definition}.  Since $M(t)D(t)$ is unitary
  on $L^2$, \eqref{eq:residual-L2} follows from
  \eqref{eq:profile-admissibility}.
\end{proof}

\section{The sharp nonlinear modifier domain}
\label{sec:sharp-channel-domains}

The nonlinear logarithmic phase need not preserve the full
Friedrichs operator domain.  
To study this effect, we first solve the problem in one
Bessel channel, then we combine the channel decomposition
with a uniform minimal domain estimate to get the full
angular classification.

\subsection{Exact trace classification}

For the one channel classification, introduce the operator
\begin{equation}
  \label{eq:radial-unitary}
  (Uf)(r)
  =
  r^{1/2}f(r),
  \qquad
  r>0.
\end{equation}
Thus $U$ is unitary from $L^2(r\,dr)$ to $L^2(dr)$ and
for $0<\mu<1$ 
\begin{equation}
  \label{eq:radial-conjugation}
  UL_\mu U^{-1}
  =
  -\partial_r^2
  +
  \frac{\mu^2-1/4}{r^2}.
\end{equation}
Let
\begin{equation}
  \label{eq:minimal-channel-domain}
  \mathcal D^{\mathrm{ch}}_{\mu,0}
  =
  U^{-1}H^2_0(\mathbb R_+),
  \qquad
  H^2_0(\mathbb R_+)
  =
  \{v\in H^2(0,\infty):v(0)=v'(0)=0\}.
\end{equation}
As a set, $\mathcal D^{\mathrm{ch}}_{\mu,0}$ is independent of
$\mu$; we endow it with the graph norm of $L_\mu$.
The Bessel domain theorem of
Derezi\'nski--Georgescu~\cite{DG21} gives, for any cutoff
$\chi\in C_c^\infty([0,\infty))$ equal to one near zero,
the following decomposition of the Friedrichs domain:
\begin{equation}
  \label{eq:Friedrichs-channel-domain}
  \dom(L_\mu)
  =
  \mathcal D^{\mathrm{ch}}_{\mu,0}
  \mathbin{\dotplus}
  \mathbb C\chi r^\mu.
\end{equation}
See also Fermi~\cite{Fer24} for the corresponding
Aharonov--Bohm Friedrichs asymptotics.  The sum is topological
in the graph norm $\|f\|_2+\|L_\mu f\|_2$: both the minimal
part and the trace coefficient depend continuously on $f$.
In particular, each $f\in\dom(L_\mu)$ has a unique expansion
(for a fixed cutoff $\chi$)
\begin{equation}
  \label{eq:channel-trace-decomposition}
  f
  =
  g+c\chi r^\mu,
  \qquad
    g\in\mathcal D^{\mathrm{ch}}_{\mu,0},
  \quad c\in\mathbb C.
\end{equation}

The next lemma shows that the minimal remainder produces no 
additional obstruction, in the sense that if
$g\in\mathcal D^{\mathrm{ch}}_{\mu,0}$ then
\begin{equation*}
  G_\gamma(c\chi r^\mu+g)=G_\gamma(c\chi r^\mu)+g+\widetilde{g}
  \quad\text{for some}\quad 
  \widetilde{g}\in \mathcal D^{\mathrm{ch}}_{\mu,0}.
\end{equation*}

\begin{lemma}[Remainder interaction]
\label{lem:minimal-remainder-interaction}
  Let $0<\mu<1$, $\gamma\in\mathbb R$, and
  $a=c\chi r^\mu$.  If $g\in\mathcal D^{\mathrm{ch}}_{\mu,0}$,
  then
  \begin{equation}
    \label{eq:interaction-remainder}
    G_\gamma(a+g)-G_\gamma(a)-g
    \in
    \mathcal D^{\mathrm{ch}}_{\mu,0}.
  \end{equation}
  The left hand side depends continuously on $(c,g)$ with
  values in $\mathcal D^{\mathrm{ch}}_{\mu,0}$, where this space
  has the
  $L_\mu$ graph norm.
\end{lemma}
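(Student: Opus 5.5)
The plan is to conjugate by $U$ and check directly that $v:=r^{1/2}F\in H^2_0(\mathbb R_+)$, where
\begin{equation*}
  F=G_\gamma(a+g)-G_\gamma(a)-g .
\end{equation*}
Write $w=r^{1/2}g\in H^2_0(\mathbb R_+)$. The one–dimensional Hardy inequalities for $H^2_0$ give
\begin{equation*}
  \frac{w}{r^2},\ \frac{w'}{r}\in L^2(dr),
  \qquad
  |w(r)|\leq Cr^{3/2}\|w''\|_2,
  \qquad
  |w'(r)|\leq Cr^{1/2}\|w''\|_2 .
\end{equation*}
In terms of $g$ this means $|g|\lesssim r$ and $|g'|\lesssim 1$ near $0$. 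Since $0<\mu<1$, $g$ is therefore negligible compared with $a=c\chi r^\mu$ as $r\to0$. Membership of $v$ in $H^2_0$ is equivalent to $v,v''\in L^2$ together with $v/r^2,\ v'/r\in L^2$ near $0$, and this is what I would verify. Away from the support of the cutoff (say $r\geq r_0$), $a$ is smooth and bounded. There the composition estimates \eqref{eq:G-first-derivative}--\eqref{eq:G-second-derivative}, the chain rule in $W^{2,\infty}_{\mathrm{loc}}$, and the one–dimensional embedding $H^1\subset L^\infty$ handle $F$ exactly as in Lemma~\ref{lem:concrete-profiles}. So the work is on $(0,r_0)$.

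On $(0,r_0)$ I would first treat the case $c=0$. Then $F=G_\gamma(g)-g=\int_0^1(DG_\gamma(sg)-I)[g]\,ds$ with $|DG_\gamma(z)-I|\leq C|\gamma||z|$. This gives $|F|\lesssim|g|^2\lesssim r|g|$ and $|F'|\lesssim |g||g'|$. The second derivative satisfies
\begin{equation*}
  |F''|\lesssim |g'|^2+|g||g''| ,
\end{equation*}
using only the bounded $D^2G_\gamma$. All weighted norms are then controlled by those of $g$ times $\sup(|g|+|g'|)$, which is finite.

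For $c\neq0$ I would avoid third derivatives of $G_\gamma$ (these blow up like $|z|^{-1}$) and differentiate $G_\gamma(a+g)$ and $G_\gamma(a)$ separately:
\begin{align*}
  F'&=\bigl(DG_\gamma(a+g)-DG_\gamma(a)\bigr)a'+\bigl(DG_\gamma(a+g)-I\bigr)g',
  \\
  F''&=\Bigl(D^2G_\gamma(a+g)[a'+g',a'+g']-D^2G_\gamma(a)[a',a']\Bigr)
  \\
  &\qquad+\bigl(DG_\gamma(a+g)-DG_\gamma(a)\bigr)a''+\bigl(DG_\gamma(a+g)-I\bigr)g'' .
\end{align*}
The key pointwise facts come from $|g|\leq\frac12|a|\sim |c|r^\mu$ for $r$ small, which keeps the segment $[a,a+g]$ away from the origin:
\begin{equation*}
  |DG_\gamma(a+g)-DG_\gamma(a)|\lesssim|\gamma||g|,
  \qquad
  |D^2G_\gamma(a+g)-D^2G_\gamma(a)|\lesssim|\gamma||g|\,|a|^{-1}\sim|g|r^{-\mu}.
\end{equation*}
The second bound uses that $D^3G_\gamma(z)=O(|\gamma|/|z|+\gamma^2+|\gamma|^3|z|)$ off $0$. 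Inserting $|a'|\sim r^{\mu-1}$, $|a''|\sim r^{\mu-2}$, $|DG_\gamma(a+g)-I|\lesssim r^\mu$, and $|D^2G_\gamma|\lesssim 1$ gives
\begin{equation*}
  |F''|\lesssim |g|r^{\mu-2}+|g'|r^{\mu-1}+|g'|^2+r^\mu|g''| .
\end{equation*}
Each term is in $L^2(r\,dr)$ because $g/r^2$, $g'/r$, and $g''$ are, and $\mu>0$. Similarly $|F|\lesssim r^\mu|g|$ and $|F'|\lesssim r^{\mu-1}|g|+r^\mu|g'|$, so $F/r^2$ and $F'/r$ are in $L^2(r\,dr)$. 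Translating back through $U$, since $v'=r^{1/2}(F'+F/(2r))$ and similarly for $v''$, this gives $v\in H^2(0,r_0)$ with $v/r^2,\ v'/r\in L^2$. Hence $v(0)=v'(0)=0$ and $F\in\mathcal D^{\mathrm{ch}}_{\mu,0}$.

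The main obstacle is the region near $r=0$ in the $c\neq0$ case, where the singular derivatives $a''\sim r^{\mu-2}$ and the non-smoothness of $G_\gamma$ at $0$ interact. The plan handles it through the pointwise domination $|g|\ll|a|$ (from Hardy on $H^2_0$) together with the Lipschitz bound for $D^2G_\gamma$ on annuli $|z|\sim r^\mu$. The threshold radius $r_1$ below which $|g|\leq\frac12|a|$ holds depends on $|c|$ and $\|g\|$. On $[r_1,r_0]$ everything is bounded away from the pole, so the argument there is the $W^{2,\infty}$ one.

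For continuity in $(c,g)$ I would take $(c_n,g_n)\to(c,g)$. All the above pointwise bounds are uniform along the sequence, with $r_1$ chosen uniformly when $c\neq0$. Each term of $F_n''$, $F_n'/r$, $F_n/r^2$ converges almost everywhere (using continuity of $DG_\gamma$, and of $D^2G_\gamma$ off $0$) and is dominated by a convergent $L^2$ sequence. A generalized dominated convergence argument then gives convergence in the graph norm, which is equivalent to the $H^2_0$ norm of $v$. The case $c=0$, and the passage $c_n\to0$, are handled by the same bounds with the cruder estimate $|F|\lesssim|g|(|a|+|g|)$.
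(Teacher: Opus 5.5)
Your proposal is correct and follows essentially the paper's proof. It uses the same pointwise/Hardy bounds for $g\in\mathcal D^{\mathrm{ch}}_{\mu,0}$ and the same relative Hessian estimate $|D^2G_\gamma(a+g)-D^2G_\gamma(a)|\lesssim|g|/|a|$ obtained from $|D^3G_\gamma(z)|\lesssim|z|^{-1}$, which leads to the same singular terms $|g|r^{\mu-2}+|g'|r^{\mu-1}+|g'|^2+r^\mu|g''|$; the only cosmetic difference is that you differentiate $F$ twice directly instead of using the $L_\mu$ composition identity and the Rellich inequality. In the continuity step you should state the relative Hessian bound globally, as the paper does (where $|g|>\frac12|a|$ it follows from the uniform Hessian bound), because your threshold $r_1$ shrinks to $0$ as $c_n\to0$; you should also use $(a+g)'=0$ a.e. on $\{a+g=0\}$ to get a.e. convergence of the Hessian terms where $D^2G_\gamma$ is discontinuous.
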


\begin{proof}
  Put $\Phi(z)=G_\gamma(z)-z$ and regard $\mathbb C$ as
  $\mathbb R^2$.  On bounded subsets,
  \begin{equation}
    \label{eq:Phi-derivative-bounds}
    |\Phi(z)|
    \lesssim
    |z|^2,
    \qquad
    |D\Phi(z)|
    \lesssim
    |z|,
    \qquad
    |D^2\Phi(z)|
    \lesssim
    1
  \end{equation}
  almost everywhere.  If $v=Ug\in H^2_0$, integration of
  $v''$ twice gives
  \begin{equation}
    \label{eq:minimal-pointwise-bounds}
    |g(r)|
    +
    r|g'(r)|
    \lesssim
    r\|v''\|_{L^2(0,r)}.
  \end{equation}
  In particular, $g=o(r)$ and $g'=o(1)$ at zero.

  We first prove a bound uniform in the angular directions.
  Write
  \begin{equation}
    \label{eq:Phi-radial-factor}
    \Phi(z)
    =
    z h(|z|),
    \qquad
    h(s)
    =
    e^{-i\gamma s}-1.
  \end{equation}
  If $n=z/|z|$ and $z\neq0$, then for
  $p,q\in\mathbb R^2$,
  \begin{equation}
    \label{eq:Phi-explicit-Hessian}
      D^2\Phi(z)[p,q]
      ={}
      h'(|z|)
      \bigl[
        (n\cdot q)p+(n\cdot p)q
      \bigr]
      +
      h''(|z|)
      (n\cdot p)(n\cdot q)z
      +
      \frac{h'(|z|)}{|z|}
      \bigl[
        p\cdot q-(n\cdot p)(n\cdot q)
      \bigr]z.
  \end{equation}
  This formula shows that the Hessian has a bounded but
  direction dependent limit at zero.
  The bounds for the derivatives of $h$, together with
  $|h(s)|\lesssim s$, give
  \begin{equation}
    \label{eq:Phi-third-derivative}
    |D^3\Phi(z)|
    \leq
    \frac{C_{\gamma,R}}{|z|},
    \qquad
    0<|z|\leq R.
  \end{equation}
  Indeed, this follows directly from
  $|D^k|z||\lesssim |z|^{1-k}$ for $k=1,2,3$.
  If $|w|\leq|z|/2$, integrate
  \eqref{eq:Phi-third-derivative} on the segment from $z$ to
  $z+w$.  If $|w|>|z|/2$, use the uniform bound for
  $D^2\Phi$.  In both cases,
  \begin{equation}
    \label{eq:Phi-relative-Hessian}
    |D^2\Phi(z+w)-D^2\Phi(z)|
    \leq
    C_{\gamma,R}
    \frac{|w|}{|z|},
    \qquad
    z\neq0.
  \end{equation}
  Thus the estimate is uniform in the angular directions of
  $z$ and $w$.

  We now prove that $q=U(\Phi(a+g)-\Phi(a))\in H^{2}_{0}$.
  We can write
  \begin{equation}
    \label{eq:interaction-second-derivative}
    q''
    =
    \frac{\mu^2-1/4}{r^2}q
    -
    U L_\mu\bigl(\Phi(a+g)-\Phi(a)\bigr)
  \end{equation}
  and
  \begin{equation}
    \label{eq:radial-composition-identity}
    L_\mu\Phi(h)
    =
    D\Phi(h)L_\mu h
    -
    D^2\Phi(h)[h',h']
    +
    \frac{\mu^2}{r^2}
    \bigl(
      \Phi(h)-D\Phi(h)h
    \bigr).
  \end{equation}
  On an interval where $\chi=1$, applying 
  \eqref{eq:radial-composition-identity} to 
  $\Phi(a+g)$, $\Phi(a)$, subtracting, and using the
  previous estimates for $D^{j}\Phi(z)$
  yields
  \begin{equation}
    \label{eq:interaction-local-bound}
      \bigl|
        L_\mu\bigl(\Phi(a+g)-\Phi(a)\bigr)
      \bigr|
      \lesssim{}
      (|a|+|g|)|L_\mu g|
      +
      |a'||g'|
      +
      |g'|^2
      +
      \frac{|g|}{|a|}|a'|^2
      +
      \frac{(|a|+|g|)|g|}{r^2}.
  \end{equation}
  When $c=0$, the terms containing $a$ are omitted.  When
  $c\neq0$, the estimate for the difference of the second
  derivatives in \eqref{eq:radial-composition-identity}
  follows from \eqref{eq:Phi-relative-Hessian}:
  \begin{equation}
    \label{eq:Phi-second-difference}
    |D^2\Phi(a+g)-D^2\Phi(a)|
    \lesssim
    \frac{|g|}{|a|}.
  \end{equation}
  Since $a=cr^\mu$, every term on the right of
  \eqref{eq:interaction-local-bound} belongs to
  $L^2((0,1),r\,dr)$.  The terms containing $L_\mu g$ are
  integrable because $a$ and $g$ are bounded.  Every remaining
  singular term is bounded by a constant times
  $r^{\mu-1}$, which is square integrable with measure $r\,dr$
  for every $\mu>0$.  We use the one dimensional Rellich
  inequality
  \begin{equation*}
    \left\|\frac{Ug}{r^2}\right\|_2
    \leq
    C\|(Ug)''\|_2,
    \qquad Ug\in H^2_0(\mathbb R_+),
  \end{equation*}
  so the right-hand side below is finite in the minimal graph norm;
  the most singular term then satisfies the direct bound
  \begin{equation}
    \label{eq:relative-Hessian-integrability}
    \left\|
      \frac{|g|}{|a|}|a'|^2
    \right\|_{L^2(r\,dr)}
    \leq
    C_{\mu}|c|
    \left\|
      \frac{Ug}{r^2}
    \right\|_2.
  \end{equation}
  Finally,
  \eqref{eq:Phi-derivative-bounds} and
  \eqref{eq:minimal-pointwise-bounds} give
  \begin{equation}
    \label{eq:interaction-rellich-bound}
    \frac{q}{r^{2}}=
    \frac{
      U\bigl(\Phi(a+g)-\Phi(a)\bigr)
    }{r^2}
    \in
    L^2(0,1).
  \end{equation}
  Away from zero, the standard one dimensional $H^2$
  composition estimate applies.
  Recalling \eqref{eq:interaction-second-derivative},
  we conclude that $q\in H^2$.  Condition
  \eqref{eq:interaction-rellich-bound} forces
  $q(0)=q'(0)=0$, so $q\in H^2_0$.  Approximation by smooth
  minimal domain functions proves membership for general $g$.

  We now prove joint continuity at $c=0$.
  Put
  \begin{equation}
    \label{eq:interaction-map}
    \mathcal I(c,g)
    =
    \Phi(a+g)-\Phi(a)
  \end{equation}
  and consider $c_n\to0$ and
  $g_n\to g$ in $\mathcal D^{\mathrm{ch}}_{\mu,0}$.  The graph
  norm
  equivalence with $H^2_0$ and
  \eqref{eq:minimal-pointwise-bounds} imply
  \begin{equation}
    \label{eq:minimal-L4-convergence}
    g_n'
    \longrightarrow
    g'
    \quad\hbox{in }L^4((0,\infty),r\,dr).
  \end{equation}
  Near zero this follows directly from
  \eqref{eq:minimal-pointwise-bounds}; away from zero it is the
  standard one dimensional $H^2$ estimate.

  Let $a_n=c_n\chi r^\mu$.  We first compare
  $\mathcal I(c_n,g_n)$ with
  $\mathcal I(0,g_n)=\Phi(g_n)$.  Set
  \begin{equation*}
    Q(z)
    =
    \Phi(z)-D\Phi(z)z
    =
    i\gamma |z|e^{-i\gamma|z|}z.
  \end{equation*}
  Then \eqref{eq:Phi-radial-factor} gives, on the bounded range
  of the profiles,
  \begin{equation}
    \label{eq:Q-cross-bound}
    |\Phi(z+w)-\Phi(z)-\Phi(w)|
    +
    |Q(z+w)-Q(z)-Q(w)|
    \leq
    C_{\gamma,R}|z||w|.
  \end{equation}
  In the region where $\chi=1$,
  the difference of their $L_\mu$ images can be written as
  \begin{equation}\label{eq:diff-img}
    \bigl[D\Phi(a_n+g_n)-D\Phi(g_n)\bigr]L_\mu g_n 
    -T_{a,n}-T_{g,n}-T_{a,g,n}
    +\frac{\mu^2}{r^2}
    \bigl[
    Q(a_n+g_n)-Q(a_n)-Q(g_n)
    \bigr],
  \end{equation}
  where the potentially discontinuous Hessian terms are
  \begin{equation}
    \label{eq:continuity-Hessian-terms}
    \begin{split}
      T_{a,n}
      &=
      \bigl[
        D^2\Phi(a_n+g_n)-D^2\Phi(a_n)
      \bigr]
      [a_n',a_n'],
      \\
      T_{g,n}
      &=
      \bigl[
        D^2\Phi(a_n+g_n)-D^2\Phi(g_n)
      \bigr]
      [g_n',g_n'],
      \\
      T_{a,g,n}
      &=
      2D^2\Phi(a_n+g_n)[a_n',g_n'].
    \end{split}
  \end{equation}
  Estimate \eqref{eq:Phi-relative-Hessian} gives, where $\chi=1$,
  \begin{equation*}
    \frac{|g_n|}{|a_n|}|a_n'|^2
    =
    \mu^2|c_n||g_n|r^{\mu-2}
  \end{equation*}
  Thus, writing $v_n=Ug_n$ and using
  $r^{2\mu-4}\le r^{-4}$ on $(0,1)$,
  \begin{equation*}
    \left\|
      \frac{|g_n|}{|a_n|}|a_n'|^2
    \right\|_{L^2(r\,dr)}^2
    \lesssim
    |c_n|^2\int_0^1|v_n(r)|^2r^{2\mu-4}\,dr
    \leq
    |c_n|^2\left\|\frac{v_n}{r^2}\right\|_2^2.
  \end{equation*}
  The last norms are uniformly bounded by the one dimensional
  Rellich inequality on $H^2_0(\mathbb R_+)$; on the cutoff
  transition annulus the same conclusion follows from
  $a_n'=O(c_n)$.  Consequently,
  \begin{equation}
    \label{eq:T-a-continuity}
    \|T_{a,n}\|_{L^2(r\,dr)}
    \leq
    C|c_n|
    \left\|
      \frac{Ug_n}{r^2}
    \right\|_2
    \longrightarrow0.
  \end{equation}
  For $T_{g,n}$, put
  $A_n=D^2\Phi(a_n+g_n)-D^2\Phi(g_n)$.  The decomposition
  \begin{equation*}
    T_{g,n}
    =
    A_n[g_n'-g',g_n']
    +A_n[g',g_n'-g']
    +A_n[g',g']
  \end{equation*}
  shows that the first two terms tend to zero by
  \eqref{eq:minimal-L4-convergence} and the uniform Hessian
  bound.  After passing to a subsequence, $A_n\to0$ almost
  everywhere on $\{g\neq0\}$; on $\{g=0\}$ one has $g'=0$
  almost everywhere, so the last term tends to zero by dominated
  convergence.  Hence
  \begin{equation}
    \label{eq:T-g-continuity}
    \|T_{g,n}\|_{L^2(r\,dr)}
    \longrightarrow0.
  \end{equation}
  The mixed Hessian term $T_{a,g,n}$ is bounded by
  $C\|a_n'g_n'\|_{L^2(r\,dr)}=O(|c_n|)$: near zero use
  $a_n'=O(c_nr^{\mu-1})$ and the uniform pointwise bound for
  $g_n'$, while on the cutoff transition annulus $a_n'=O(c_n)$.
  The term containing $D\Phi$ in \eqref{eq:diff-img} satisfies
  \begin{equation*}
    \left\|
      [D\Phi(a_n+g_n)-D\Phi(g_n)]L_\mu g_n
    \right\|_{L^2(r\,dr)}
    \lesssim
    \|a_n\|_\infty\|L_\mu g_n\|_{L^2(r\,dr)}
    \longrightarrow0
  \end{equation*}
  by the local Lipschitz bound for $D\Phi$. 
  The last term in \eqref{eq:diff-img} 
  tends to zero by the same weighted
  estimate as \eqref{eq:relative-Hessian-integrability}, since
  \eqref{eq:Q-cross-bound} bounds its numerator by
  $C|a_n||g_n|$.
  Thus $\|\mathcal I(c_n,g_n)-\mathcal I(0,g_n)\|_{L^2(r\,dr)}
  \to0$. 
  We conclude that
  \begin{equation}
    \label{eq:interaction-continuity-zero-trace}
    \mathcal I(c_n,g_n)-\mathcal I(0,g_n)
    \longrightarrow0
    \quad\hbox{in the }L_\mu\hbox{ graph norm}.
  \end{equation}
  For $\mathcal I(0,g_n)\to\mathcal I(0,g)$, the Hessian
  difference is decomposed into terms controlled by
  \eqref{eq:minimal-L4-convergence} and
  \begin{equation*}
    [D^2\Phi(g_n)-D^2\Phi(g)][g',g'].
  \end{equation*}
  The latter is handled by the same zero set argument: $D^2\Phi$
  is continuous where $g\neq0$, while $g'=0$ almost everywhere
  on $\{g=0\}$.  The terms containing $D\Phi$ and $Q$ converge
  by their continuity and the weighted minimal-domain bounds.
  If $c_n\to c\neq0$, then $|a_n(r)|\ge c_0r^\mu$ near the pole
  for some $c_0>0$; the same estimates apply without the small
  factor $|c_n|$, and convergence follows from the zero set
  argument and the convergence of the trace coefficients.  This
  proves joint continuity everywhere.
\end{proof}

\begin{theorem}[One channel domain]
\label{the:sharp-channel-domain}
  Let $0<\mu<1$ and $\gamma\in\mathbb R\setminus\{0\}$.
  Decompose $f\in\dom(L_\mu)$ as in
  \eqref{eq:channel-trace-decomposition}.  Then
  \begin{equation}
    \label{eq:sharp-channel-domain-classification}
    G_\gamma(f)\in\dom(L_\mu)
    \quad\Longleftrightarrow\quad
    \mu>\frac12
    \ \text{or}\
    c=0.
  \end{equation}
  On the set selected by the right hand side, $G_\gamma$
  preserves the coefficient $c$ and is continuous in the
  graph norm.  
  For each such $f$,
  \begin{equation}
    \label{eq:channel-modifier-graph-growth}
      \|G_\gamma(f)\|_{\mathrm{graph}(L_\mu)}
      :=
      \|G_\gamma(f)\|_2
      +
      \|L_\mu G_\gamma(f)\|_2
      \leq
      C_f(1+|\gamma|)^2,
      \qquad
      \gamma\in\mathbb R.
  \end{equation}
\end{theorem}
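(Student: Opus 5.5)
The plan is to reduce everything to the pure trace term by means of Lemma~\ref{lem:minimal-remainder-interaction}. Write $f=g+a$ with $a=c\chi r^\mu$ and $g\in\mathcal D^{\mathrm{ch}}_{\mu,0}$, as in \eqref{eq:channel-trace-decomposition}. The lemma gives
\begin{equation*}
  G_\gamma(f)=G_\gamma(a)+g+\widetilde g,
  \qquad
  \widetilde g\in\mathcal D^{\mathrm{ch}}_{\mu,0},
\end{equation*}
with $\widetilde g$ depending continuously on $(c,g)$. Hence $G_\gamma(f)\in\dom(L_\mu)$ if and only if $G_\gamma(a)\in\dom(L_\mu)$. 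Moreover, by \eqref{eq:Friedrichs-channel-domain} the trace coefficient of $G_\gamma(f)$ equals that of $G_\gamma(a)$. If $c=0$ there is nothing to prove, since $G_\gamma(0)=0$ and $G_\gamma(f)=g+\widetilde g$ has trace $0$. So the whole problem concerns $G_\gamma(c\chi r^\mu)$ with $c\neq0$.

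On the region where $\chi=1$ we have $|a|=|c|r^\mu$. Thus
\begin{equation*}
  G_\gamma(a)=a+a\,h(|c|r^\mu),
  \qquad
  h(s)=e^{-i\gamma s}-1=-i\gamma s+O(s^2).
\end{equation*}
Since $h$ is smooth, $a\,h(|c|r^\mu)$ is a radial function with the convergent expansion $\sum_{k\ge1}\beta_k c|c|^k r^{(k+1)\mu}$. Using $L_\mu r^{\kappa}=(\mu^2-\kappa^2)r^{\kappa-2}$, the leading term gives
\begin{equation*}
  L_\mu\bigl(-i\gamma c|c|r^{2\mu}\bigr)=3i\gamma\mu^2c|c|\,r^{2\mu-2},
\end{equation*}
which is nonzero. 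The higher terms are $O(r^{3\mu-2})$. This bound follows either termwise, or more cleanly from writing $a\,h(|c|r^\mu)=r^{2\mu}k(r^\mu)$ with $k$ smooth and computing $L_\mu$ directly. Now $r^{2\mu-2}\in L^2((0,1),r\,dr)$ if and only if $\mu>1/2$.

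For $\mu\le1/2$ we argue by contradiction. Suppose $G_\gamma(a)\in\dom(L_\mu)$. Then $L_\mu G_\gamma(a)$ is an $L^2$ function, and it coincides with the distributional image on $(0,1)$. That image is $L_\mu a=0$ near $0$, plus $3i\gamma\mu^2c|c|r^{2\mu-2}+O(r^{3\mu-2})$. The $O(r^{3\mu-2})$ part is at most $r^{2\mu-2}\cdot r^\mu$, hence smaller, so the sum fails to be in $L^2(r\,dr)$. This is a contradiction.

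For $\mu>1/2$ we show $v=U\bigl(a\,h(|c|r^\mu)\bigr)\in H^2_0(\mathbb R_+)$. Near $0$, $v=O(r^{2\mu+1/2})$ and $v''=O(r^{2\mu-3/2})$, which is in $L^2(dr)$ exactly when $\mu>1/2$. Also $v(0)=v'(0)=0$, because $2\mu+1/2>3/2$. Away from $0$ everything is smooth and compactly supported. Consequently $G_\gamma(a)=a+(\text{minimal})$, so $G_\gamma(a)\in\dom(L_\mu)$ with trace coefficient $c$, and therefore $G_\gamma$ preserves $c$.

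For graph continuity, the map $c\mapsto U\bigl(c\chi r^\mu h(|c|\chi r^\mu)\bigr)\in H^2_0$ is continuous by dominated convergence, since the derivatives are dominated by $C r^{2\mu-3/2}$ uniformly for bounded $c$. Combining this with the joint continuity of $\widetilde g$ from Lemma~\ref{lem:minimal-remainder-interaction}, and with the topological direct sum \eqref{eq:Friedrichs-channel-domain}, gives continuity of $G_\gamma$ in the graph norm on the selected set.

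For the growth bound \eqref{eq:channel-modifier-graph-growth}, we work in the Sobolev and composition framework of Lemma~\ref{lem:concrete-profiles}. Write
\begin{equation*}
  L_\mu G_\gamma(f)=DG_\gamma(f)L_\mu f-D^2G_\gamma(f)[f',f']+\frac{\mu^2}{r^2}\bigl(G_\gamma(f)-DG_\gamma(f)f\bigr),
\end{equation*}
as in \eqref{eq:radial-composition-identity}. The bounds \eqref{eq:G-first-derivative}--\eqref{eq:G-second-derivative} give $|DG_\gamma|\lesssim1+|\gamma|$ and $|D^2G_\gamma|\lesssim|\gamma|+\gamma^2$. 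The last term equals $\frac{\mu^2}{r^2}\,i\gamma|f|e^{-i\gamma|f|}f$, of size $|\gamma|\,|f|^2/r^2$. For the singular pieces we use the decomposition $f=g+a$ already established:
\begin{itemize}
\item the contribution of $g$ is controlled via Rellich, as in \eqref{eq:relative-Hessian-integrability};
\item $|a|^2/r^2\sim r^{2\mu-2}$ and $|a'|^2\sim r^{2\mu-2}$, both of which lie in $L^2(r\,dr)$ when $\mu>1/2$ (when $c=0$ these terms are absent).
\end{itemize}
Collecting terms yields $\|L_\mu G_\gamma(f)\|_2\le C_f(1+|\gamma|)^2$, and the $L^2$ norm is simply $\|f\|_2$.

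The main obstacle is making this last bound honest at $\gamma$-uniform level near the pole. One must check that the $\gamma^2$ coefficient only multiplies quantities like $|f|\,|f'|^2$, which are $L^2$ because $f$ is bounded and $f'\in L^4(r\,dr)$. The tool for this is \eqref{eq:minimal-pointwise-bounds} applied to $g$, together with the explicit form of $a$. A second delicate point is that the $O(r^{3\mu-2})$ remainder in the necessity direction cannot cancel the leading term; this is settled by the explicit factorization $r^{2\mu}k(r^\mu)$ with $k(0)\neq0$.
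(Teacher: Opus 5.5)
Your proposal is correct and follows essentially the same route as the paper. You reduce to the pure trace term via Lemma~\ref{lem:minimal-remainder-interaction}, then compute $L_\mu\bigl(c r^\mu e^{-i\gamma|c|r^\mu}\bigr)$, whose leading term $3i\gamma\mu^2c|c|\,r^{2\mu-2}$ gives the threshold $\mu>1/2$. For $\mu>1/2$ you show the correction lies in $H^2_0$, and you use the composition identity \eqref{eq:radial-composition-identity} together with the splitting $f=g+a$ to get the $(1+|\gamma|)^2$ growth; this is the same argument as the paper, with some details written out more fully.
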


\begin{proof}
  Write $a=c\chi r^\mu$.  By
  Lemma~\ref{lem:minimal-remainder-interaction},
  \begin{equation}
    \label{eq:modifier-domain-reduction}
    G_\gamma(f)
    =
    G_\gamma(a)
    +
    g
    +
    h,
    \qquad
    h\in\mathcal D^{\mathrm{ch}}_{\mu,0}.
  \end{equation}
  It remains to test the trace term.  Near zero, if
  $c\neq0$ and $\kappa=\gamma|c|$, then
  \begin{equation}
    \label{eq:general-trace-modifier-image}
    L_\mu
    \left(
      cr^\mu e^{-i\kappa r^\mu}
    \right)
    =
    c\mu^2r^{2\mu-2}
    e^{-i\kappa r^\mu}
    \left(
      3i\kappa+\kappa^2r^\mu
    \right).
  \end{equation}
  Its squared radial norm is finite at zero exactly when
  \begin{equation}
    \label{eq:trace-integrability-threshold}
    \int_0^1r^{4\mu-3}\,dr
    <
    \infty,
    \qquad\text{that is,}\qquad
    \mu>\frac12.
  \end{equation}
  For this range,
  \begin{equation}
    \label{eq:trace-nonlinear-correction}
    U\bigl(G_\gamma(a)-a\bigr)
    =
    cr^{\mu+1/2}
    \left(
      e^{-i\kappa r^\mu}-1
    \right)
  \end{equation}
  belongs to $H^2_0$ near zero, and the cutoff terms are
  harmless.  Thus
  $G_\gamma(a)-a\in\mathcal D^{\mathrm{ch}}_{\mu,0}$.
  For $0<\mu\leq1/2$, equation
  \eqref{eq:general-trace-modifier-image} excludes membership
  whenever $c\neq0$.  If $c=0$, the interaction lemma with
  $a=0$ gives
  $G_\gamma(g)-g\in\mathcal D^{\mathrm{ch}}_{\mu,0}$.

  Formula \eqref{eq:trace-nonlinear-correction} has zero
  Friedrichs trace, so the coefficient $c$ is unchanged.
  Continuity follows from the topological decomposition
  \eqref{eq:Friedrichs-channel-domain}, the interaction lemma,
  and the explicit trace calculation.  The same composition
  identity
  \eqref{eq:radial-composition-identity} gives
  $|DG_\gamma|\lesssim1+|\gamma||f|$ and
  $|D^2G_\gamma|\lesssim
  |\gamma|+\gamma^2|f|$.  The already proved trace
  integrability gives $(f')^2\in L^2(r\,dr)$ on the sharp set;
  together with the minimal domain estimates, this proves
  \eqref{eq:channel-modifier-graph-growth}.
\end{proof}

For completeness, we show that continuity of $G_{\gamma}$ can
not be upgraded to local Lipschitz continuity. This failure
is unrelated to the pole, and is caused by the lack of a 
continuous second derivative of $z\mapsto ze^{-i\gamma|z|}$ 
at zero.
Thus graph continuity is the best general statement.  A
Lipschitz theorem requires either a weaker
topology or a restriction preventing moving transversal
zeros.

\begin{proposition}[Lipschitz failure at zeros]
\label{prop:graph-Lipschitz-failure}
  For $\gamma\neq0$, the map $G_\gamma$ is not locally
  Lipschitz in the $L_\mu$ graph norm on any graph
  neighborhood of zero contained in the admissible set of
  Theorem~\ref{the:sharp-channel-domain}.
\end{proposition}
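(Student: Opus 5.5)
Non-Lipschitz behaviour means exhibiting two families $f_\varepsilon,g_\varepsilon$ in the admissible set, as close to zero in the graph norm as we like, such that
\[
\frac{\|G_\gamma(f_\varepsilon)-G_\gamma(g_\varepsilon)\|_{\mathrm{graph}(L_\mu)}}{\|f_\varepsilon-g_\varepsilon\|_{\mathrm{graph}(L_\mu)}}\longrightarrow\infty .
\]
The obstruction is not at the pole, so all functions will be supported in a fixed annulus, say $r\in(1,2)$. There $L_\mu$ is a uniformly elliptic one dimensional operator, the graph norm is equivalent to $H^2(1,2)$, and every such function lies in $\mathcal D^{\mathrm{ch}}_{\mu,0}$, hence in the admissible set with $c=0$. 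The mechanism is the direction dependent Hessian of $G_\gamma$ at $z=0$, see \eqref{eq:Phi-explicit-Hessian}. At a transversal zero, $D^2G_\gamma(f)[f',f']$ flips between its one-sided limits. Translating the zero by $\delta$ changes the composite on a set of length about $\delta$ by a jump of size $O(1)$. Measured in $L^2$ this is of order $\delta^{1/2}$, whereas the input changes by only $O(\delta)$ in $H^2$.

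Concretely, fix $\psi\in C_c^\infty(1,2)$ equal to $1$ near $r_0=3/2$. Set
\[
f_\delta(r)=\varepsilon\,(r-r_0-\delta)\psi(r),
\]
which is real valued with a simple zero at $r_0+\delta$, and compare $f_\delta$ with $f_0$. Then $\|f_\delta-f_0\|_{H^2}=\varepsilon\delta\|\psi\|_{H^2}$, and $\|f_\delta\|_{\mathrm{graph}}\lesssim\varepsilon$, so both lie in any prescribed graph neighbourhood of zero once $\varepsilon$ is small. For real $s$ we have $G_\gamma(s)=s e^{-i\gamma|s|}$, and on the set where $\psi=1$ a direct computation gives
\[
\partial_r^2 G_\gamma(f_\delta)=\varepsilon^2\bigl(-2i\gamma\,\mathrm{sgn}(f_\delta)+O(\varepsilon|f_\delta|)\bigr)e^{-i\gamma|f_\delta|}.
\]
The same expansion holds with $f_0$ in place of $f_\delta$.

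On the interval $(r_0,r_0+\delta)$ the signs of $f_\delta$ and $f_0$ are opposite. Hence there
\[
|\partial_r^2G_\gamma(f_\delta)-\partial_r^2G_\gamma(f_0)|\geq 4|\gamma|\varepsilon^2-C\varepsilon^3\delta .
\]
Elsewhere the signs agree, and the difference is $O(\varepsilon^2\cdot\varepsilon\delta)$ by the Lipschitz bound for $D G_\gamma$ and the smoothness of $e^{-i\gamma|s|}$ on each sign region. The first order terms $\frac1r\partial_r$, the potential term $\mu^2/r^2$, and the $L^2$ part of the norm are all $O(\varepsilon\delta)$ on the annulus. This gives
\[
\|G_\gamma(f_\delta)-G_\gamma(f_0)\|_{\mathrm{graph}}\geq c|\gamma|\varepsilon^2\delta^{1/2}-C\varepsilon\delta .
\]
For fixed small $\varepsilon$ and $\delta\to0$, this bound divided by $\|f_\delta-f_0\|_{\mathrm{graph}}\sim\varepsilon\delta$ is at least $c|\gamma|\varepsilon\delta^{-1/2}-C\to\infty$. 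Since every graph neighbourhood of zero contains such pairs, no local Lipschitz constant exists.

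The only delicate step is the pointwise second derivative computation. We must check that no cancellation kills the sign jump, and that the remaining terms are genuinely $O(\delta)$ in $L^2$ rather than merely bounded. The sign jump is explicit: for real $s$, $\frac{d^2}{ds^2}\bigl(s e^{-i\gamma|s|}\bigr)=(-2i\gamma\,\mathrm{sgn}\,s-\gamma^2 s)e^{-i\gamma|s|}$, and $f'=\varepsilon$ is constant there. The remainder bounds use only that on $\{f_\delta f_0>0\}$ the map $s\mapsto G_\gamma(s)$ is smooth with uniformly bounded derivatives.
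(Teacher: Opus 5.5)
Your proof is correct and is essentially the paper's argument: a real profile with a simple zero, supported in a fixed annulus, is perturbed by a small multiple of the cutoff so that the zero moves. The jump $4|\gamma|$ of $F_\gamma''$ at $0$ then gives an $L^2$ gap of order $\varepsilon^2\delta^{1/2}$ in the second derivative, against an input change of order $\varepsilon\delta$. The paper couples the parameters (amplitude $\delta$, zero separation $\delta^3$) and uses the $H^2$--graph norm equivalence on a fixed compact set instead of subtracting the lower-order terms by hand.

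One small slip: the remainder in your formula for $\partial_r^2G_\gamma(f_\delta)$ is exactly $-\gamma^2 f_\delta$, i.e.\ $O(|f_\delta|)$ rather than $O(\varepsilon|f_\delta|)$. Your subsequent bound $4|\gamma|\varepsilon^2-C\varepsilon^3\delta$ already uses the correct size, so nothing downstream is affected.
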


\begin{proof}
  Fix $r_0>0$ and a compact interval
  $K\Subset(0,\infty)$ containing $r_0$ in its interior.
  Standard interior elliptic estimates give
  \begin{equation}
    \label{eq:fixed-support-graph-equivalence}
    C_K^{-1}\|h\|_{H^2}
    \leq
    \|h\|_2+\|L_\mu h\|_2
    \leq
    C_K\|h\|_{H^2}
  \end{equation}
  for every $h\in H^2$ supported in $K$.  The weighted and
  unweighted norms are equivalent on this fixed interval.
  (For the lower bound, write $h''$ in terms of $L_\mu h$,
  $h'$, and $h$, and absorb $h'$ with the standard
  $H^2$ interpolation inequality).

  Put $F_\gamma(s)=se^{-i\gamma|s|}$ for real $s$.
  For $s\neq0$, direct differentiation gives
  \begin{equation}
    \label{eq:real-modifier-second-derivative}
    F_\gamma''(s)
    =
    \begin{cases}
      e^{-i\gamma s}(-2i\gamma-\gamma^2s),
      &s>0,\\
      e^{i\gamma s}(2i\gamma-\gamma^2s),
      &s<0.
    \end{cases}
  \end{equation}
  The second derivative has a jump of size $4|\gamma|$ at
  zero.  Hence there is an $\varepsilon_\gamma>0$ such that
  \begin{equation}
    \label{eq:scalar-jump-lower-bound}
    |F_\gamma''(s_+)-F_\gamma''(s_-)|
    \geq
    2|\gamma|
  \end{equation}
  whenever
  $0<s_+<\varepsilon_\gamma$ and
  $-\varepsilon_\gamma<s_-<0$.

  Choose a real $\eta\in C_c^\infty(K)$ equal to one on a
  fixed neighborhood of $r_0$.  For small $\delta>0$, set
  \begin{equation}
    \label{eq:Lipschitz-counterexample}
    f_\delta(r)
    =
    \delta\eta(r)(r-r_0),
    \qquad
    \widetilde f_\delta(r)
    =
    f_\delta(r)+\delta^4\eta(r).
  \end{equation}
  Both functions are smooth and supported away from the pole,
  hence belong to the admissible set for every $0<\mu<1$.
  They tend to zero in the graph norm, and
  \begin{equation}
    \label{eq:Lipschitz-input-distance}
    \|\widetilde f_\delta-f_\delta\|_{
      \mathrm{graph}(L_\mu)}
    =
    \delta^4\|\eta\|_{\mathrm{graph}(L_\mu)}.
  \end{equation}

  Where $\eta=1$, the only zeros of 
  $f_{\delta}$, $\widetilde{f}_{\delta}$ are $r_0$ and
  $r_0-\delta^3$ respectively.
  Consider the middle half of the interval
  between them,
  \begin{equation}
    \label{eq:zero-separation-interval}
    J_\delta
    =
    \left[
      r_0-\frac34\delta^3,
      r_0-\frac14\delta^3
    \right].
  \end{equation}
  On $J_\delta$,
  \begin{equation}
    \label{eq:opposite-sign-ranges}
    -\frac34\delta^4
    \leq
    f_\delta
    \leq
    -\frac14\delta^4,
    \qquad
    \frac14\delta^4
    \leq
    \widetilde f_\delta
    \leq
    \frac34\delta^4.
  \end{equation}
  Moreover, both first derivatives equal $\delta$ and both
  second derivatives vanish there.  If
  \begin{equation*}
    H_\delta
    =
    G_\gamma(\widetilde f_\delta)
    -
    G_\gamma(f_\delta),
  \end{equation*}
  the standard one dimensional chain rule gives
  \begin{equation}
    \label{eq:output-second-derivative}
    H_\delta''
    =
    \delta^2
    \left[
      F_\gamma''(\widetilde f_\delta)
      -
      F_\gamma''(f_\delta)
    \right]
    \qquad\hbox{on }J_\delta.
  \end{equation}
  For sufficiently small $\delta$,
  \eqref{eq:scalar-jump-lower-bound} therefore implies
  \begin{equation}
    \label{eq:output-H2-lower-bound}
    \|H_\delta''\|_{L^2(J_\delta,r\,dr)}
    \geq
    c_{\gamma,r_0}\delta^{7/2}.
  \end{equation}
  Indeed, the pointwise lower bound is
  $2|\gamma|\delta^2$, while
  $|J_\delta|=\delta^3/2$ and $r$ is comparable to $r_0$.
  Since $H_\delta$ is supported in $K$,
  \eqref{eq:fixed-support-graph-equivalence} gives
  \begin{equation}
    \label{eq:Lipschitz-output-distance}
    \|H_\delta\|_{\mathrm{graph}(L_\mu)}
    \geq
    c_{\gamma,\mu,K}\delta^{7/2}.
  \end{equation}
  Dividing \eqref{eq:Lipschitz-output-distance} by
  \eqref{eq:Lipschitz-input-distance} gives a lower bound
  $c\delta^{-1/2}$, which tends to infinity.
\end{proof}

\subsection{Full angular classification}

Fix $0<\alpha<1$ and denote the full minimal domain by
\begin{equation}
  \label{eq:full-minimal-domain}
  \mathcal D^{\mathrm{AB}}_{\alpha,0}
  =
  \operatorname{cl}_{\mathrm{graph}(H_\alpha)}
  C_c^\infty(\mathbb R^2\setminus\{0\}),
\end{equation}
where the graph norm is
$\|\psi\|_2+\|H_\alpha\psi\|_2$.

The following lemma packages standard linear information, 
see~\cite{DG21} for the radial classification 
and~\cite{Fer24,FFFP15} for the corresponding Aharonov--Bohm 
realization and the Friedrichs form characterization.
We include the formulation and proof to
fix the global graph space, trace representatives, and boundary decay
used below.

\begin{lemma}[Minimal domain and traces]
\label{lem:full-minimal-domain}
  Fix a radial cutoff 
  $\chi=\chi_{\mathrm{tr}}\in C_c^\infty([0,\infty))$ 
  equal to one near zero.
  If $0<\alpha<1$, then
  \begin{equation}
    \label{eq:minimal-domain-equals-Y}
    \mathcal D^{\mathrm{AB}}_{\alpha,0}
    =
    \mathcal Y
  \end{equation}
  defined in
  \eqref{eq:pole-compatible-space}--\eqref{eq:pole-compatible-norm},
  with equivalent graph and $\mathcal Y$ norms.  Moreover,
  \begin{equation}
    \label{eq:full-domain-decomposition}
    \dom(H_\alpha)
    =
    \mathcal Y
    \mathbin{\dotplus}
    \mathbb C\chi r^\alpha
    \mathbin{\dotplus}
    \mathbb C\chi r^{1-\alpha}e^{-i\theta}.
  \end{equation}
  The sum is topological in the graph norm, that is, 
  the projection onto
  $\mathcal Y$ and both trace coefficient maps are continuous.
  In particular,
  \begin{equation}
    \label{eq:full-domain-L-infinity}
    \dom(H_\alpha)
    \hookrightarrow
    L^\infty(\mathbb R^2).
  \end{equation}
  Every $\psi\in\mathcal Y$ also satisfies
  \begin{equation}
    \label{eq:minimal-angular-boundary-decay}
    \lim_{r\downarrow0}
    \left\|
      \frac{\psi(r,\cdot)}r
    \right\|_{L^2(\mathbb S^1)}
    =
    0.
  \end{equation}
  Since $\mathcal Y\subset H^2(\mathbb R^2)$ and
  $\psi/|x|^2\in L^2$, every $\psi\in\mathcal Y$ has a continuous
  representative with $\psi(0)=0$ and
  \begin{equation}\label{eq:LinfY}
    \lim_{r\downarrow0}
    \|\psi(r,\cdot)\|_{L^\infty(\mathbb S^1)}
    =
    0.
  \end{equation}
\end{lemma}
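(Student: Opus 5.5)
We reduce everything to the angular channel decomposition. For $\phi\in\dom(H_\alpha)$, write $\phi=\sum_m\phi_m(r)e^{im\theta}$. Then $H_\alpha\phi=\sum_m e^{im\theta}L_{\mu_m}\phi_m$, and by Parseval the graph norm splits as
\[
\|\phi\|_2^2+\|H_\alpha\phi\|_2^2=2\pi\sum_m\bigl(\|\phi_m\|_{L^2(r\,dr)}^2+\|L_{\mu_m}\phi_m\|_{L^2(r\,dr)}^2\bigr).
\]

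\textbf{Step 1: identify the minimal domain with $\mathcal Y$.} For $\psi\in C_c^\infty(\mathbb R^2\setminus\{0\})$ we prove the two-sided bound
\[
\|\psi\|_{\mathcal Y}\simeq\|\psi\|_2+\|H_\alpha\psi\|_2 .
\]
The upper bound $\|H_\alpha\psi\|_2\lesssim\|\psi\|_{\mathcal Y}$ is \eqref{eq:Halpha-weighted-bound}, since \eqref{eq:Halpha-expanded} gives $|A_\alpha|\lesssim|x|^{-1}$.

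For the lower bound we work channel by channel. In the conjugated form \eqref{eq:radial-conjugation}, set $v=U\psi_m$ and expand
\[
\|L_{\mu}\psi_m\|^2=\|v''\|^2+2(\mu^2-\tfrac14)\|v'/r\|^2+\bigl((\mu^2-\tfrac14)^2+6(\mu^2-\tfrac14)\bigr)\|v/r^2\|^2 ,
\]
integrating by parts, which is legitimate for compact support away from $0$. This is the classical Rellich computation. It gives, uniformly in $m$,
\[
\|L_{\mu_m}\psi_m\|^2\geq c\bigl(\|v''\|^2+(1+\mu_m^2)^2\|v/r^2\|^2\bigr),
\]
because the only dangerous values are $\mu_m$ near the roots of the quadratic form. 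With $0<\alpha<1$ we have $\mu_m\ge\nu_\alpha>0$ and $\mu_m\notin\mathbb Z$, and the Hardy and Rellich constants for $\mu\notin\{0,1\}$ keep the form coercive. The constant degenerates only as $\mu\to0$ or $\mu\to1$, which never happens here; this is the point where $\alpha\notin\mathbb Z$ is used. Summing over $m$ controls $\|\psi/|x|^2\|_2$ (in the $\theta$-sum, $\mu_m^2/r^2$ accounts for angular derivatives), then $\|\nabla\psi/|x|\|_2$ by interpolation, and finally $\|\Delta\psi\|_2$ from \eqref{eq:Halpha-expanded}. Taking closures gives \eqref{eq:minimal-domain-equals-Y}.

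\textbf{Step 2: decompose the full domain.} For each channel, \eqref{eq:Friedrichs-channel-domain} gives $\phi_m=g_m+c_m\chi r^{\mu_m}$. The function $\chi r^{\mu}$ is in $\dom(L_\mu)$ for every $\mu$, but it lies in the minimal domain exactly when $\mu\ge1$. Indeed, for $\mu\ge1$ we have $r^{\mu+1/2}\in H^2_0$ near $0$ unless $\mu=1$, and $\mu_m=1$ is excluded. So only $m=0$ ($\mu_0=\alpha$) and $m=-1$ ($\mu_{-1}=1-\alpha$) carry genuine traces; in every other channel $c_m\chi r^{\mu_m}$ is absorbed into $g_m$.

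To put $\sum_{m\neq0,-1}\phi_m e^{im\theta}$ into $\mathcal Y$ we need the one-channel decomposition with constants uniform in $m$. The Friedrichs domain of $L_\mu$ for $\mu\ge1$ equals the minimal domain, with
\[
\|g\|_{\text{graph}}\simeq\|v''\|+\mu^2\|v/r^2\|
\]
by the Rellich lower bound of Step 1, uniformly for $\mu\ge1+\min(\alpha,1-\alpha)$. Hence the Step 1 equivalence passes to the closure in each high channel and sums. Continuity of $c_0$ and $c_{-1}$ follows from the one-channel topological statement, since the channel projection is continuous in the graph norm. This yields \eqref{eq:full-domain-decomposition}.

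\textbf{Step 3: pointwise consequences.} Every element of $\mathcal Y$ lies in $H^2(\mathbb R^2)\subset C\cap L^\infty$, and the two trace functions are bounded, which gives \eqref{eq:full-domain-L-infinity}. For \eqref{eq:minimal-angular-boundary-decay}, in each channel \eqref{eq:minimal-pointwise-bounds} gives
\[
|\psi_m(r)|\le r\,\|v_m''\|_{L^2(0,r)} .
\]
Summing the squares in $m$ and applying dominated convergence (the tails $\sum_m\|v_m''\|^2_{L^2(0,r)}\to0$) shows $\|\psi(r,\cdot)\|_{L^2(\mathbb S^1)}/r\to0$. Continuity of $\psi$ together with $\psi/|x|^2\in L^2$ forces $\psi(0)=0$, and uniform continuity near $0$ then gives \eqref{eq:LinfY}.

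\textbf{Main obstacle.} The hard part is the uniformity in $m$ of the Rellich coercivity in Steps 1–2, especially for the channels with $\mu_m\in(0,1)$. There the cross term $2(\mu^2-\tfrac14)\|v'/r\|^2$ is negative when $\mu<1/2$, and we must show that the full quadratic form in $(v'',v'/r,v/r^2)$ remains positive on $H^2_0$. The approach I would try first is the sharp 1D Hardy inequality $\|v'/r\|\ge\tfrac32\|v/r^2\|$, optimizing the constant explicitly. The needed positivity should reduce to $\mu\notin\{0,1\}$.
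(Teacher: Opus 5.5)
Your architecture matches the paper's. You reduce to channels and prove a Rellich bound for $B_{\mu_m}=-\partial_r^2+(\mu_m^2-\frac14)r^{-2}$ that is uniform in $m$. You use the Derezi\'nski--Georgescu decomposition only for $m=0,-1$, and you get the boundary decay by summing \eqref{eq:minimal-pointwise-bounds} over channels. Steps 2 and 3 are fine. Recovering $\Delta\psi$ from \eqref{eq:Halpha-expanded} and using $\|D^2\psi\|_2=\|\Delta\psi\|_2$ on $C_c^\infty$ is a legitimate shortcut in place of the paper's polar Hessian identity.

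\textbf{The gap.} The estimate everything rests on is asserted, not proved, and the identity you want to derive it from is wrong. With $k=\mu^2-\frac14$, integration by parts gives $\|v'/r\|^2=-\mathrm{Re}\int v''\bar v\,r^{-2}\,dr+3\|v/r^2\|^2$, which is \eqref{eq:mixed-Rellich-identity}. Hence
\[
\|B_\mu v\|^2=\|v''\|^2+2k\|v'/r\|^2+(k^2-6k)\|v/r^2\|^2 ,
\]
with $-6k$, not $+6k$. The sign matters in three ways.
\begin{itemize}
\item With your coefficients every term is positive at $\mu=1$, so you would ``prove'' a Rellich bound exactly where none holds.
\item For small $\mu$ (roughly $\mu<0.44$) your right-hand side is negative on functions close to $r^{3/2}$ on a logarithmic scale. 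There $\|v''\|^2:\|v'/r\|^2:\|v/r^2\|^2\approx\frac9{16}:\frac94:1$. So the positivity you pose as the main obstacle for the channel $\mu=\nu_\alpha$ is false as stated.
\item Your degeneracy claim is also off: nothing degenerates as $\mu\to0$, and the only bad value is $\mu=1$.
\end{itemize}
Moreover, the tool you suggest points the wrong way. For $\mu<\frac12$ the cross term $2k\|v'/r\|^2$ is negative, and a lower bound $\|v'/r\|\ge\frac32\|v/r^2\|$ cannot absorb it.

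\textbf{The repair is elementary.}
\begin{itemize}
\item For $0<\mu<1$ one has $|k|<\frac34$. The triangle inequality and the Hardy inequalities $\|v''\|\ge\frac12\|v'/r\|\ge\frac34\|v/r^2\|$ give $\|B_\mu v\|\ge(\frac34-|k|)\|v/r^2\|$. In the two low channels this constant is $\frac12+\nu_\alpha^2$ and $1-(1-\nu_\alpha)^2$ respectively.
\item For $k\ge0$, in particular for all $\mu>1$, the corrected identity and the same Hardy bounds give $\|B_\mu v\|^2\ge(k-\frac34)^2\|v/r^2\|^2=(\mu^2-1)^2\|v/r^2\|^2$.
\item Equivalently, in Mellin variables $B_\mu v$ is obtained from $v/r^2$ by the multiplier $\mu^2-(1+i\tau)^2$. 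Its modulus is at least $|1-\mu^2|$ by \eqref{eq:parabola-distance}. This is the bound \eqref{eq:Bessel-Rellich} that the paper takes from Derezi\'nski--Georgescu.
\end{itemize}
Combine this with $|1-\mu_m^2|\ge c_\alpha\langle m\rangle^2$ and $\|v''\|\le\|B_\mu v\|+|k|\|v/r^2\|$. That gives the uniform channel bound, and the rest of your argument then goes through.
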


\begin{proof}
  Expand a core function 
  $\psi=\sum_{m\in \mathbb{Z}}\psi_{m}(r)e^{im \theta}\in
    C_{c}^{\infty}(\mathbb{R}^{2}\setminus\{0\})$
  in angular modes and put
  \begin{equation}
    \label{eq:q-m-definition}
    q_m(r)
    =
    r^{1/2}\psi_m(r),
    \qquad
    B_{\mu_m}
    =
    -\partial_r^2
    +
    \frac{\mu_m^2-1/4}{r^2}.
  \end{equation}
  Since $\alpha$ is nonintegral, $\mu_m=|m+\alpha|\neq1$ 
  for every $m\in\mathbb Z$.  Lemma~4.2 and Proposition~5.2 
  of Derezi\'nski--Georgescu~\cite{DG21}, in the regimes
  $\mu_m<1$ and $\mu_m>1$, respectively, give
  \begin{equation}
    \label{eq:Bessel-Rellich}
    \left\|
      \frac{q_m}{r^2}
    \right\|_2
    \leq
    \frac1{|1-\mu_m^2|}
    \|B_{\mu_m}q_m\|_2.
  \end{equation}
  In particular in the second regime, write
  $r^{-2}q_m=Z_{\mu_m}B_{\mu_m}q_m$ 
  and apply Proposition~5.2 of~\cite{DG21}
  (note that they denote the Green kernel of $B_{m}$
  by $G_{m}$, which collides with our notation for the
  nonlinear modifier); since for $\tau\in\mathbb R$
  \begin{equation}
    \label{eq:parabola-distance}
    \left|
      \mu_m^2-(1+i\tau)^2
    \right|^2
    =
    (1-\mu_m^2)^2
    +
    2(1+\mu_m^2)\tau^2
    +
    \tau^4,
  \end{equation}
  we see that the distance from $\mu_m^2$ to
  $(1+i\mathbb R)^2$ is exactly $|1-\mu_m^2|$.

  Write $\langle m\rangle=(1+m^2)^{1/2}$.
  Recalling that $\mu_m^2=(m+\alpha)^2$, we see that the constant
  \begin{equation}
    \label{eq:angular-gap-constant}
    c_\alpha
    =
    \inf_{m\in\mathbb Z}
    \frac{
      |1-(m+\alpha)^2|
    }{
      1+m^2
    }
  \end{equation}
  is positive (no numerator vanishes and the quotient
  tends to one as $|m|\to\infty$).  Thus,
  \begin{equation}
    \label{eq:uniform-angular-gap}
    |1-\mu_m^2|
    \geq
    c_\alpha\langle m\rangle^2,
  \end{equation}
  while
  $|\mu_m^2-1/4|\leq C_\alpha\langle m\rangle^2$.
  Estimate \eqref{eq:Bessel-Rellich} and the identity
  \begin{equation*}
    q_m''
    =
    \frac{\mu_m^2-1/4}{r^2}q_m
    -
    B_{\mu_m}q_m
  \end{equation*}
  imply
  \begin{equation}
    \label{eq:channel-minimal-estimate}
    \|q_m''\|_2
    +
    \langle m\rangle^2
    \left\|
      \frac{q_m}{r^2}
    \right\|_2
    \leq
    C_\alpha
    \|B_{\mu_m}q_m\|_2.
  \end{equation}
  Integration by parts also gives
  \begin{equation}
    \label{eq:mixed-Rellich-identity}
    \int_0^\infty\frac{|q_m'|^2}{r^2}\,dr
    =
    -\mathop{\rm Re}
    \int_0^\infty
    q_m''\frac{\overline q_m}{r^2}\,dr
    +
    3\int_0^\infty\frac{|q_m|^2}{r^4}\,dr
  \end{equation}
  since the boundary terms vanish on the punctured core.  By
  Cauchy--Schwarz and
  \eqref{eq:channel-minimal-estimate},
  \begin{equation}
    \label{eq:mixed-Rellich-estimate}
      \langle m\rangle^2
      \left\|
        \frac{q_m'}r
      \right\|_2^2
      \leq{}
      \langle m\rangle^2
      \|q_m''\|_2
      \left\|
        \frac{q_m}{r^2}
      \right\|_2
      +
      3\langle m\rangle^2
      \left\|
        \frac{q_m}{r^2}
      \right\|_2^2
      \leq
      C_\alpha
      \|B_{\mu_m}q_m\|_2^2.
  \end{equation}
  Therefore
  \begin{equation}
    \label{eq:full-channel-Rellich}
    \|q_m''\|_2
    +
    \langle m\rangle
    \left\|
      \frac{q_m'}r
    \right\|_2
    +
    \langle m\rangle^2
    \left\|
      \frac{q_m}{r^2}
    \right\|_2
    \leq
    C_\alpha
    \|B_{\mu_m}q_m\|_2.
  \end{equation}

  We now convert the one dimensional estimates for the 
  radial coefficients $q_m$ into the weighted 
  Cartesian norms that define $\mathcal Y$. Put
  \begin{equation*}
    u_m(r,\theta)
    =
    f_m(r)e^{im\theta},
    \qquad
    f_m(r)
    =
    r^{-1/2}q_m(r).
  \end{equation*}
  In the orthonormal polar frame, the Cartesian Hessian
  satisfies the exact pointwise identity
  \begin{equation}
    \label{eq:polar-Hessian-identity}
      \sum_{j,k=1}^{2}|\partial_{j}\partial_{k}u_{m}|^{2}
      ={}
      |f_m''|^2
      +
      2m^2
      \left|
        \frac{f_m'}r-\frac{f_m}{r^2}
      \right|^2
      +
      \left|
        \frac{f_m'}r
        -
        \frac{m^2f_m}{r^2}
      \right|^2.
  \end{equation}
  Direct differentiation gives
  \begin{equation}
    \label{eq:q-to-polar-derivatives}
    \begin{aligned}
      f_m''
      &=
      r^{-1/2}
      \left(
        q_m''
        -
        \frac{q_m'}r
        +
        \frac{3q_m}{4r^2}
      \right),
      \\
      \frac{f_m'}r-\frac{f_m}{r^2}
      &=
      r^{-3/2}
      \left(
        q_m'
        -
        \frac{3q_m}{2r}
      \right),
      \\
      \frac{f_m'}r-\frac{m^2f_m}{r^2}
      &=
      r^{-3/2}
      \left(
        q_m'
        -
        \frac{(m^2+1/2)q_m}{r}
      \right).
    \end{aligned}
  \end{equation}
  Moreover,
  \begin{equation}
    \label{eq:polar-weighted-first-derivatives}
    \left|
      \frac{\nabla u_m}{r}
    \right|^2
    =
    \frac{|f_m'|^2}{r^2}
    +
    \frac{m^2|f_m|^2}{r^4},
    \qquad
    \left|
      \frac{u_m}{r^2}
    \right|^2
    =
    \frac{|f_m|^2}{r^4}.
  \end{equation}
  Equations \eqref{eq:polar-Hessian-identity}--%
  \eqref{eq:polar-weighted-first-derivatives}, integrated with
  measure $r\,dr\,d\theta$, yield
  \begin{equation}
    \label{eq:polar-Y-channel-bound}
      \|D^2u_m\|_2^2
      +
      \left\|
        \frac{\nabla u_m}{r}
      \right\|_2^2
      +
      \left\|
        \frac{u_m}{r^2}
      \right\|_2^2
      \leq
      C
      \left[
        \|q_m''\|_2^2
        +
        \langle m\rangle^2
        \left\|
          \frac{q_m'}r
        \right\|_2^2
        +
        \langle m\rangle^4
        \left\|
          \frac{q_m}{r^2}
        \right\|_2^2
      \right].
  \end{equation}
  The first derivatives are controlled by the
  $L^2$ norm and the Cartesian Hessian.  Parseval's identity
  and
  \begin{equation}
    \label{eq:channel-graph-Parseval}
    \|H_\alpha\psi\|_2^2
    =
    2\pi
    \sum_{m\in\mathbb Z}
    \|B_{\mu_m}q_m\|_2^2
  \end{equation}
  now allow us to sum the squares of
  \eqref{eq:full-channel-Rellich}.  Together with
  \eqref{eq:polar-Y-channel-bound}, this proves
  \begin{equation}
    \label{eq:Y-from-graph}
    \|\psi\|_{\mathcal Y}
    \leq
    C_\alpha
    \bigl(
      \|\psi\|_2+\|H_\alpha\psi\|_2
    \bigr)
  \end{equation}
  on the punctured core.  Conversely,
  \eqref{eq:Halpha-expanded} gives
  \begin{equation}
    \label{eq:graph-from-Y}
    \|H_\alpha\psi\|_2
    \leq
    \|\Delta\psi\|_2
    +
    2|\alpha|
    \left\|
      \frac{\nabla\psi}{r}
    \right\|_2
    +
    \alpha^2
    \left\|
      \frac{\psi}{r^2}
    \right\|_2
    \leq
    C_\alpha\|\psi\|_{\mathcal Y}.
  \end{equation}
  Taking completions proves
  \eqref{eq:minimal-domain-equals-Y}.
  For $\psi\in\mathcal Y$, estimate
  \eqref{eq:minimal-pointwise-bounds}, applied to every angular
  coefficient, gives
  \begin{equation}
    \label{eq:angular-boundary-decay-estimate}
    \left\|
      \frac{\psi(r,\cdot)}r
    \right\|_{L^2(\mathbb S^1)}^2
    \leq
    C
    \sum_{m\in\mathbb Z}
    \int_0^r|q_m''(s)|^2\,ds.
  \end{equation}
  The sum of the integrals over $(0,\infty)$ is finite by
  \eqref{eq:full-channel-Rellich} and
  \eqref{eq:channel-graph-Parseval}.  Absolute continuity of
  the integral proves
  \eqref{eq:minimal-angular-boundary-decay}.

  For $0<\alpha<1$, only the modes $m=0,-1$ have Bessel order
  below one.  The channel decomposition
  \eqref{eq:Friedrichs-channel-domain}, summed over the other
  minimal channels, gives
  \eqref{eq:full-domain-decomposition}.  Finally,
  $\mathcal Y\subset H^2\subset L^\infty$, while both cutoff
  traces are bounded, hence \eqref{eq:full-domain-L-infinity} 
  follows.
\end{proof}

Put for $\alpha\not\in \mathbb{Z}$
\begin{equation}
  \label{eq:covariant-gradient}
  \nabla_\alpha
  =
  \nabla+iA_\alpha.
\end{equation}
The Friedrichs form domain is
\begin{equation}
  \label{eq:Friedrichs-form-domain}
  \mathcal Q_\alpha
  =
  \left\{
    v\in H^1(\mathbb R^2):
    A_\alpha v\in L^2(\mathbb R^2)
  \right\},
\end{equation}
equipped with its form norm
$\|v\|_{\mathcal Q_\alpha}^{2}:=
  \|v\|_{L^{2}}^{2}+\|\nabla_{\alpha}v\|_{L^{2}}^{2}\simeq
  \|v\|_{H^{1}}^{2}+\||x|^{-1}v\|_{L^{2}}^{2}$,
see~\cite[Proposition~3.8]{Fer24}.
Recall the trace decomposition
\eqref{eq:full-domain-decomposition} and
the sharp class \eqref{eq:full-trace-candidate},
which is endowed with the graph norm of
$H_{\alpha}$.

\begin{lemma}[$L^4$ control]
\label{lem:covariant-L4}
  If $\phi\in\mathcal D_\alpha^\sharp$, then
  \begin{equation}
    \label{eq:covariant-L4}
    \nabla_\alpha\phi
    \in
    L^4(\mathbb R^2).
  \end{equation}
  If $\phi_n\to\phi$ in 
  $\mathcal D_\alpha^\sharp$, then $\phi_n\to\phi$ in
  $L^\infty$ and
  $\nabla_\alpha\phi_n\to\nabla_\alpha\phi$ in $L^4$.
\end{lemma}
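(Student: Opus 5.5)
Using the topological decomposition \eqref{eq:full-domain-decomposition} of Lemma~\ref{lem:full-minimal-domain}, write $\phi=\psi+c_0\chi r^\alpha+c_{-1}\chi r^{1-\alpha}e^{-i\theta}$ with $\psi\in\mathcal Y$. On $\mathcal D_\alpha^\sharp$ only traces with $\mu_m=|m+\alpha|>1/2$ can survive. Since $\psi\mapsto\psi$, $\phi\mapsto c_0$ and $\phi\mapsto c_{-1}$ are continuous in the graph norm, and the trace pieces depend linearly on the coefficients, it suffices to prove two things. First, the estimate $\|\psi\|_\infty+\|\nabla_\alpha\psi\|_4\le C\|\psi\|_{\mathcal Y}$ on $\mathcal Y$. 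Second, $\nabla_\alpha(\chi r^{\mu_m}e^{im\theta})\in L^4$ whenever $\mu_m>1/2$, together with boundedness of these trace vectors. Convergence of $\phi_n\to\phi$ then follows by linearity: apply the bounds to $\phi_n-\phi$, decomposed in the same way.

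For the minimal part, $\mathcal Y\subset H^2(\mathbb R^2)\hookrightarrow L^\infty$, and $\nabla\psi\in H^1\hookrightarrow L^4$ by Sobolev embedding in two dimensions. The magnetic term $A_\alpha\psi$ satisfies $|A_\alpha\psi|\le\alpha|\psi|/|x|$. I would estimate it by Gagliardo--Nirenberg/H\"older interpolation between $\psi/|x|^2\in L^2$ and $\psi\in L^\infty$:
\[
\int\frac{|\psi|^4}{|x|^4}\le\|\psi\|_\infty^2\left\|\frac{\psi}{|x|^2}\right\|_2^2 .
\]
This gives $\|A_\alpha\psi\|_4\le C\|\psi\|_\infty^{1/2}\|\psi/|x|^2\|_2^{1/2}\le C\|\psi\|_{\mathcal Y}$. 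The whole bound is therefore controlled by $\|\psi\|_{\mathcal Y}$, which is equivalent to the graph norm by \eqref{eq:minimal-domain-equals-Y}.

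For the trace pieces, let $f=\chi(r)r^{\mu}e^{im\theta}$ with $\mu=\mu_m\in(1/2,1)$. Then $|f|\le 1$ up to constants, and near the pole
\[
|\nabla_\alpha f|\lesssim r^{\mu-1}.
\]
This comes from $|\partial_r f|\sim\mu r^{\mu-1}$, while the angular covariant derivative is $r^{-1}(im+i\alpha)f$, of size $\mu r^{\mu-1}$. Away from the pole the function is smooth and compactly supported. Hence $|\nabla_\alpha f|^4\lesssim r^{4\mu-4}$, and this is integrable against $r\,dr$ exactly when $4\mu-3>-1$, that is, when $\mu>1/2$. This is precisely where the trace condition defining $\mathcal D_\alpha^\sharp$ is used. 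At $\mu\le1/2$ the gradient fails to be in $L^4$, which matches the domain threshold of Theorem~\ref{the:sharp-full-domain}.

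The step I expect to need the most care is the $L^\infty$ convergence and $L^4$ bound for the minimal part. This requires the full-space estimate on $\mathcal Y$ rather than channelwise bounds, so that the sum over angular modes causes no trouble. The $H^2$ Sobolev embedding together with the weighted interpolation above avoids any mode summation, so I expect no loss there.
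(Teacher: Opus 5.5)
Your proposal is correct and follows essentially the same route as the paper. You use the decomposition \eqref{eq:full-domain-decomposition}, bound the $\mathcal Y$ part through the interpolation $\|\psi/r\|_4^2\le\|\psi\|_\infty\|\psi/r^2\|_2$ together with an $L^4$ bound on $\nabla\psi$, check that $|\nabla_\alpha(\chi r^\mu e^{im\theta})|\lesssim r^{\mu-1}\in L^4$ exactly when $\mu>1/2$, and obtain continuity by applying these bounds to differences via the topological decomposition. The only difference is that you get the $L^4$ bound on $\nabla\psi$ from $H^1\hookrightarrow L^4$, where the paper uses Gagliardo--Nirenberg; both work.
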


\begin{proof}
  For $\psi\in\mathcal Y$, the two estimates
  \begin{equation}
    \label{eq:Y-fourth-power-estimates}
    \|\nabla\psi\|_4^2
    \lesssim
    \|\psi\|_\infty\|D^2\psi\|_2,
    \qquad
    \left\|
      \frac{\psi}{r}
    \right\|_4^2
    \leq
    \|\psi\|_\infty
    \left\|
      \frac{\psi}{r^2}
    \right\|_2
  \end{equation}
  give $\nabla_\alpha\psi\in L^4$.  Any trace allowed in
  \eqref{eq:full-trace-candidate} has order $\mu>1/2$.
  Its $\nabla_{\alpha}$ gradient is $O(r^{\mu-1})$ at zero and hence
  belongs to $L^4$.  The continuity statement follows from the
  topological decomposition
  \eqref{eq:full-domain-decomposition}, the norm equivalence in
  Lemma~\ref{lem:full-minimal-domain}, and the same estimates
  applied to differences.
\end{proof}

\begin{proof}[Proof of Theorem~\ref{the:sharp-full-domain}]
  We first prove sufficiency, thus we assume
  $\phi\in \mathcal{D}^{\sharp}_{\alpha}$.  Write
  $\nabla_{\alpha,j}=\partial_j+iA_{\alpha,j}$.
  On a simply connected set
  $\mathcal O\Subset\mathbb R^2\setminus\{0\}$, choose a real
  function $\chi$ such that $A_\alpha=\nabla\chi$
  (a suitable branch of $\arg z$).  Then
  \begin{equation}
    \label{eq:local-gauge-derivatives}
    \partial_j(e^{i\chi}\phi)
    =
    e^{i\chi}\nabla_{\alpha,j}\phi,
    \qquad
    \partial_j^2(e^{i\chi}\phi)
    =
    e^{i\chi}\nabla_{\alpha,j}^2\phi.
  \end{equation}
  Gauge equivariance
  $G_\gamma(e^{is}z)=e^{is}G_\gamma(z)$ implies
  \begin{equation}
    \label{eq:equivariant-differentials}
    \begin{aligned}
      DG_\gamma(e^{is}z)[e^{is}v]
      &=
      e^{is}DG_\gamma(z)[v],
      \\
      D^2G_\gamma(e^{is}z)
      [e^{is}v,e^{is}w]
      &=
      e^{is}D^2G_\gamma(z)[v,w].
    \end{aligned}
  \end{equation}
  Apply the standard second order Sobolev chain rule to
  $G_\gamma(e^{i\chi}\phi)$ and use
  \eqref{eq:local-gauge-derivatives}--%
  \eqref{eq:equivariant-differentials}.  This gives, on
  $\mathcal O$,
  \begin{equation}
    \label{eq:covariant-first-chain}
    \nabla_{\alpha,j}G_\gamma(\phi)
    =
    DG_\gamma(\phi)
    \nabla_{\alpha,j}\phi
  \end{equation}
  and
  \begin{equation}
    \label{eq:covariant-second-chain}
    H_\alpha G_\gamma(\phi)
    =
    DG_\gamma(\phi)H_\alpha\phi
    -
    \sum_{j=1}^2
    D^2G_\gamma(\phi)
    [
      \nabla_{\alpha,j}\phi,
      \nabla_{\alpha,j}\phi
    ].
  \end{equation}
  Here local elliptic regularity gives
  $\phi\in H^2_{\mathrm{loc}}(\mathcal O)$.
  The map $G_\gamma$ belongs to
  $W^{2,\infty}_{\mathrm{loc}}$, so the identities hold almost
  everywhere and in distributions.
  Note that \eqref{eq:covariant-second-chain} makes sense 
  a.e.~and the singularity at $\phi=0$ of $G_{\gamma}(\phi)$ 
  does not cause a problem: indeed, on the zero set of
  $\phi$, the quadratic Hessian term is independent of the
  representative of $D^2G_\gamma(0)$ because
  $\nabla_{\alpha}\phi=\nabla\phi=0$ almost everywhere there.

  The derivative bounds
  \eqref{eq:G-first-derivative} and
  \eqref{eq:G-second-derivative}, together with
  Lemma~\ref{lem:covariant-L4}, show that the right hand side
  of \eqref{eq:covariant-second-chain} belongs to $L^2$ and
  give
  \begin{equation}
    \label{eq:full-composition-graph-bound}
      \|H_\alpha G_\gamma(\phi)\|_2
      \lesssim{}
      (1+|\gamma|\|\phi\|_\infty)
      \|H_\alpha\phi\|_2
      +
      \bigl(
        |\gamma|
        +
        \gamma^2\|\phi\|_\infty
      \bigr)
      \|\nabla_\alpha\phi\|_4^2.
  \end{equation}
  Thus if we denote the right hand side of
  \eqref{eq:covariant-second-chain} by
  $\mathcal R_\gamma(\phi)$ we have
  $\mathcal R_\gamma(\phi)\in L^2$.

  We must still check that the calculation creates no
  distribution at $x=0$.  Recall the Friedrichs form domain
  \eqref{eq:Friedrichs-form-domain}.
  Since $\phi\in\dom(H_\alpha)\subset\mathcal Q_\alpha$,
  the standard first order Sobolev chain rule gives
  \begin{equation}
    \label{eq:composition-form-bound}
    \|\nabla G_\gamma(\phi)\|_2
    \leq
    C(1+|\gamma|\|\phi\|_\infty)
    \|\nabla\phi\|_2.
  \end{equation}
  Moreover,
  $|G_\gamma(\phi)|=|\phi|$, so
  $A_\alpha G_\gamma(\phi)\in L^2$.
  Hence $G_\gamma(\phi)\in\mathcal Q_\alpha$.

  Let $\mathfrak q_\alpha$ be the closed Friedrichs form.
  The local identity \eqref{eq:covariant-second-chain} and a
  partition of unity give
  \begin{equation}
    \label{eq:punctured-form-identity}
    \mathfrak q_\alpha
    [
      G_\gamma(\phi),\zeta
    ]
    =
    \langle
      \mathcal R_\gamma(\phi),\zeta
    \rangle
  \end{equation}
  for every
  $\zeta\in C_c^\infty(\mathbb R^2\setminus\{0\})$.
  This punctured space is a form core by the definition of the
  Friedrichs realization.  Both sides of
  \eqref{eq:punctured-form-identity} are continuous in the form
  norm, so the identity extends to every
  $\zeta\in\mathcal Q_\alpha$.  The representation theorem for
  closed forms now gives
  \begin{equation}
    \label{eq:global-covariant-chain}
    G_\gamma(\phi)\in\dom(H_\alpha),
    \qquad
    H_\alpha G_\gamma(\phi)
    =
    \mathcal R_\gamma(\phi).
  \end{equation}
  This proves the global form of
  \eqref{eq:covariant-second-chain}, ruling out a point mass at
  the pole, and yields
  \eqref{eq:full-modifier-graph-growth}.

  We prove that $G_{\gamma}(\phi)$ has the same Friedrichs 
  traces as $\phi$.
  Since $\phi\in \mathcal D_\alpha^\sharp$, its trace
  term is either 0 or is $O(|x|^{\mu})$ at 0 for some
  $\mu>1/2$, while its minimal component vanishes at 0 and is in
  $\mathcal Y\subset H^2(\mathbb{R}^{2})\subset C^{0,\beta}$
  for all $\beta<1$, so it is $O(|x|^{\beta})$.
  Thus $\phi=o(|x|^{\frac 12})$ at 0, and
  \begin{equation}
    \label{eq:nonlinear-correction-order}
    |G_\gamma(\phi)-\phi|
    \leq
    |\gamma||\phi|^2
    =
    o(r).
  \end{equation}
  Hence the nonlinear correction has zero Friedrichs traces, and
  both trace coefficients are preserved.

  For graph continuity, let $\phi_n\to\phi$ in the graph norm
  inside $\mathcal D_\alpha^\sharp$.
  Lemmas~\ref{lem:full-minimal-domain}
  and~\ref{lem:covariant-L4} imply
  \begin{equation}
    \label{eq:sharp-convergence}
    \phi_n\longrightarrow\phi
    \quad\hbox{in }L^\infty,
    \qquad
    \nabla_\alpha\phi_n
    \longrightarrow
    \nabla_\alpha\phi
    \quad\hbox{in }L^4.
  \end{equation}
  We now apply \eqref{eq:covariant-second-chain}, where
  the only subtle point is the discontinuity of
  $D^2G_\gamma$ at zero.  On $\{\phi\neq0\}$ use pointwise
  convergence.  On $\{\phi=0\}$ use the Sobolev space property
  that
  $\nabla\phi=0$ almost everywhere.  The uniform bound on
  $D^2G_\gamma$ and dominated convergence then prove graph
  continuity.  The same argument with $\phi$ fixed and
  $\gamma_n\to\gamma$ proves graph continuity in $\gamma$.

  Conversely, assume $\phi\in\operatorname{dom}(H_\alpha)$
  with
  $G_{\gamma}(\phi)\in \operatorname{dom}(H_\alpha)$.
  Suppose first that
  $\alpha\neq1/2$, put
  \begin{equation}
    \label{eq:lower-trace-order}
    \nu=\nu_{\alpha}
    =
    \min\{\alpha,1-\alpha\}
    <
    \frac12,
  \end{equation}
  and let $m_{\nu}=0$ if $\alpha<\frac 12$,
  $m_{\nu}=-1$ if $\alpha>\frac 12$.
  Then \eqref{eq:full-domain-decomposition}, 
  \eqref{eq:minimal-angular-boundary-decay}
  give a coefficient $c\in\mathbb C$ such that
  \begin{equation}
    \label{eq:lower-trace-remainder-size}
    \|\phi(r,\theta)-cr^{\nu}e^{im_{\nu}\theta}\|
      _{L^2(\mathbb S^1_{\theta})}
    =
    o(r^\nu).
  \end{equation}
  We must prove that $c=0$.
  By \eqref{eq:LinfY} and the explicit trace terms in
  \eqref{eq:full-domain-decomposition}, we also have
  \begin{equation*}
    \|\phi(r,\cdot)\|_{L^\infty(\mathbb S^1)}=o(1).
  \end{equation*}
  Therefore,
  by \eqref{eq:lower-trace-remainder-size},
  \begin{equation}
    \label{eq:lower-trace-quadratic-projection}
    \left\|
      |\phi|\phi
      -
      |c|c r^{2\nu}e^{im_\nu\theta}
    \right\|_{L^1(\mathbb S^1)}
    \leq
    C\bigl(\|\phi\|_{L^2(\mathbb S^1)}+\|c r^{\nu}e^{im_\nu\theta}\|
      _{L^2(\mathbb S^1)}\bigr)
    \|\phi-c r^{\nu}e^{im_\nu\theta}\|_{L^2(\mathbb S^1)}
    =
    o(r^{2\nu})
  \end{equation}
  and
  \begin{equation}
    \label{eq:lower-trace-cubic-remainder}
    \||\phi|^3\|_{L^1(\mathbb S^1)}
    \leq
    \|\phi\|_\infty
    \|\phi\|_{L^2(\mathbb S^1)}^2
    =
    o(r^{2\nu}).
  \end{equation}
  Taking the $m_\nu$ projection in
  $G_\gamma(\phi)-\phi=-i\gamma|\phi|\phi+O(|\phi|^3)$
  gives
  \begin{equation}
    \label{eq:lower-trace-generated-power}
    p(r):=
    \bigl(
      G_\gamma(\phi)-\phi
    \bigr)_{m_\nu}
    =
    -i\gamma|c|c\,r^{2\nu}
    +
    o(r^{2\nu}).
  \end{equation}
  Recall now that both $G_\gamma(\phi)$ and $\phi$,
  hence their difference, belong to the operator domain.
  In the $m_{\nu}$ channel, the Friedrichs trace of elements
  of the operator domain is of the form $c'r^{\nu}$;
  comparing with \eqref{eq:lower-trace-generated-power},
  this shows that $c'=0$ and hence
  $p(r)\in \mathcal D^{\mathrm{ch}}_{\nu,0}$.
  We conclude that $p(r)=o(r)$.  Since $2\nu<1$,
  this implies $c=0$.

  It remains to consider the case $\alpha=1/2$.
  Then the decomposition
  \eqref{eq:full-domain-decomposition} takes the form, near zero,
  \begin{equation}
    \label{eq:half-flux-expansion}
    \phi(r,\theta)
    =
    r^{1/2}A(\theta)
    +
    \psi(r,\theta),
    \qquad
    A(\theta)
    =
    c_0+c_{-1}e^{-i\theta},
    \quad
    \psi\in\mathcal Y.
  \end{equation}
  We must prove that $c_{0}=c_{-1}=0$, or equivalently
  $A \equiv0$. Put
  \begin{equation}
    \label{eq:half-flux-rescaled-profile}
    v_r(\theta)
    =
    r^{-1/2}\phi(r,\theta)
    =
    A(\theta)+\varepsilon_r(\theta),
    \qquad
    \varepsilon_r
    =
    r^{-1/2}\psi(r,\cdot).
  \end{equation}
  By \eqref{eq:minimal-angular-boundary-decay} and the
  embedding $\mathcal Y\subset L^\infty$,
  \begin{equation}
    \label{eq:half-flux-epsilon-bounds}
    \|\varepsilon_r\|_2
    =
    o(r^{1/2}),
    \qquad
    \|\varepsilon_r\|_\infty
    \leq
    r^{-1/2}\|\psi\|_\infty.
  \end{equation}
  Hence
  \begin{equation}
    \label{eq:half-flux-L4-convergence}
    \|\varepsilon_r\|_4^2
    \leq
    \|\varepsilon_r\|_\infty
    \|\varepsilon_r\|_2
    =
    o(1).
  \end{equation}
  Thus $v_r\to A$ in $L^4(\mathbb S^1)$.  The same
  interpolation also gives
  \begin{equation}
    \label{eq:half-flux-cubic-remainder}
    r^{1/2}\|\varepsilon_r\|_6^3
    \leq
    r^{1/2}
    \|\varepsilon_r\|_\infty^2
    \|\varepsilon_r\|_2
    =
    o(1).
  \end{equation}
  Thus
  $r^{1/2}\|v_r\|_6^3\to0$.
  Now, the estimate
  \begin{equation}
    \label{eq:phase-Taylor-remainder}
    |e^{-i\gamma s}-1+i\gamma s|
    \leq
    C_\gamma s^2,
    \qquad
    s\geq0,
  \end{equation}
  implies
  \begin{equation*}
    \frac{G_\gamma(\phi)-\phi}{r}
    =
    r^{-1/2}v_r
    \left(
    e^{-i\gamma r^{1/2}|v_r|}-1
    \right)
    =
    -i\gamma |v_r|v_r
    +
    O\!\left(r^{1/2}|v_r|^3\right).
  \end{equation*}
  Therefore
  \begin{equation*}
  \begin{split}
    \left\|
      \frac{
        G_\gamma(\phi(r,\cdot))-\phi(r,\cdot)
      }r
      +
      i\gamma|A|A
    \right\|_{L^2(\mathbb S^1)}
    &\leq
    \left\|
    \frac{G_\gamma(\phi)-\phi}{r}
    +i\gamma |A|A
    \right\|_{L^2}
    \\
    &\leq
    |\gamma|
    \bigl\||v_r|v_r-|A|A\bigr\|_{L^2}
    +
    C_\gamma r^{1/2}\|v_r\|_{L^6}^3.
    \end{split}
  \end{equation*}
  The first term tends to zero because $v_r\to A$ in $L^4$,
  the second by \eqref{eq:half-flux-cubic-remainder} as already
  noticed, hence
  \begin{equation}
    \label{eq:half-flux-generated-power}
    \lim_{r\downarrow0}
    \left\|
      \frac{
        G_\gamma(\phi(r,\cdot))-\phi(r,\cdot)
      }r
      +
      i\gamma|A|A
    \right\|_{L^2(\mathbb S^1)}
    =
    0.
  \end{equation}

  By assumption
  $h=G_\gamma(\phi)-\phi\in \dom(H_{1/2})$.  Convergence
  \eqref{eq:half-flux-generated-power} gives
  $r^{-1/2}h(r,\cdot)\to0$ in $L^2(\mathbb S^1)$.
  On the other hand, by \eqref{eq:full-domain-decomposition}
  we can write
  $r^{-1/2}h(r,\theta) = d_0+d_{-1}e^{-i\theta} +
    r^{-1/2}\psi(r,\theta)$
  and by
  \eqref{eq:minimal-angular-boundary-decay} 
  \begin{equation*}
    \|r^{-1/2}\psi(r,\cdot)\|_{L^2}
    =
    r^{1/2}
    \left\|\frac{\psi(r,\cdot)}{r}\right\|_{L^2}
    \longrightarrow0.
  \end{equation*}
  Therefore $d_{0}=d_{-1}=0$, that is, $h\in \mathcal{Y}$.
  But then \eqref{eq:minimal-angular-boundary-decay}
  implies $r^{-1}h(r,\cdot)\to0$ in $L^2(\mathbb S^1)$,
  and by comparison with \eqref{eq:half-flux-generated-power} 
  we conclude $|A|A=0$.
\end{proof}

\begin{corollary}[Admissibility]
\label{cor:sharp-full-admissibility}
  Let $0<\alpha<1$ and
  $\phi\in\mathcal D_\alpha^\sharp$.  Then $\phi$ is admissible
  in the sense of Definition~\ref{def:admissible-profile}.
\end{corollary}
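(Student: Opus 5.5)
The plan is to read the two requirements of Definition~\ref{def:admissible-profile} off Theorem~\ref{the:sharp-full-domain} with $\gamma(t)=\lambda\log t/2$. First, $\phi\in\mathcal D_\alpha^\sharp\subset\dom(H_\alpha)$, and by \eqref{eq:full-domain-L-infinity} also $\phi\in L^2\cap L^\infty$. Second, for every $t\geq1$ we have $W(t)=G_{\gamma(t)}(\phi)$. If $\lambda=0$ then $W(t)\equiv\phi$ and there is nothing to prove, so assume $\lambda\neq0$. For $t>1$ the theorem gives $W(t)\in\mathcal D_\alpha^\sharp\subset\dom(H_\alpha)$. At $t=1$ we have $\gamma=0$ and $W(1)=\phi$, which lies in the domain trivially.

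For continuity, I would invoke the statement that, for fixed $\phi$, the map $\gamma\mapsto G_\gamma(\phi)$ is graph continuous. This comes from the proof via \eqref{eq:covariant-second-chain} with $\gamma_n\to\gamma$, and that argument also covers $\gamma=0$, since the chain-rule identity and the dominated convergence argument never use $\gamma\neq0$. Composing with the continuous map $t\mapsto\lambda\log t/2$ then gives $W\in C([1,\infty);\dom(H_\alpha))$ for the graph topology.

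The growth bound \eqref{eq:profile-admissibility} follows from \eqref{eq:full-modifier-graph-growth}:
\[
\|H_\alpha W(t)\|_2\leq C_\phi(1+|\lambda|\log t/2)^2\leq C'_\phi(1+\log t)^2 .
\]
More concretely, I would use \eqref{eq:full-composition-graph-bound}, whose constant is explicit in $\|\phi\|_\infty$, $\|H_\alpha\phi\|_2$ and $\|\nabla_\alpha\phi\|_4$. The last of these is finite by Lemma~\ref{lem:covariant-L4}.

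The only delicate point is whether the theorem's hypothesis $\gamma\neq0$ obstructs the endpoint $t=1$. As noted above, the sufficiency part of the proof of Theorem~\ref{the:sharp-full-domain} is uniform in $\gamma$ including $0$, so this is a matter of checking hypotheses rather than a real obstacle.
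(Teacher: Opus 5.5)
Your proposal is correct and follows the paper's proof: you apply the growth bound \eqref{eq:full-modifier-graph-growth} with $\gamma=\lambda\log t/2$, and you obtain continuity in $t$ from the graph continuity of $\gamma\mapsto G_\gamma(\phi)$ established in the proof of Theorem~\ref{the:sharp-full-domain}. Your additional checks are correct details that the paper leaves implicit: the case $\lambda=0$, the endpoint $\gamma=0$ at $t=1$ (where the sufficiency argument still applies), and $\phi\in L^2\cap L^\infty$ via \eqref{eq:full-domain-L-infinity}.
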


\begin{proof}
  Apply \eqref{eq:full-modifier-graph-growth} with
  $\gamma=\lambda\log(t)/2$.  The graph continuity argument in
  the proof of Theorem~\ref{the:sharp-full-domain}, with
  $\gamma_n\to\gamma$ and $\phi$ fixed, gives the continuity
  required in Definition~\ref{def:admissible-profile}.
\end{proof}

We have seen that if $\phi\not\in\mathcal{D}^{\sharp}_{\alpha}$,
then $G_{\gamma}(\phi)$ may fall outside the Friedrichs domain.
In this case, as a substitute,
we can use a suitable extension of the magnetic form
associated to $H_{\alpha}$.
If $\nabla_\alpha v\in L^{2p}_{\mathrm{loc}}$ for some $p>1$,
put $s=(2p)'<2$ and define
\begin{equation}
  \label{eq:extended-covariant-distribution}
  \langle H_\alpha v,\zeta\rangle
  =
  \int_{\mathbb R^2}
  \nabla_\alpha v\mathbin{\cdot}
  \overline{\nabla_\alpha\zeta}
  \,dx,
  \qquad
  \zeta\in C_c^\infty(\mathbb R^2).
\end{equation}
Indeed, $\nabla_\alpha\zeta\in L^s$ locally because
$|A_\alpha(x)|\lesssim |x|^{-1}$ and $s<2$.  Thus
\eqref{eq:extended-covariant-distribution} defines a
distribution.  It agrees with the closed Friedrichs form
pairing whenever both $v$ and the test function belong to
$\mathcal Q_\alpha$, the proper form domain.

\begin{lemma}[Image of the Friedrichs domain]
\label{lem:full-domain-weak-image}
  Let $0<\alpha<1$, let
  $\phi\in\dom(H_\alpha)$, and assume
  \begin{equation}
    \label{eq:full-domain-p-range}
    1<p<\frac1{1-\nu_\alpha},
    \qquad
    \nu_{\alpha}=\min\{\alpha,1-\alpha\}.
  \end{equation}
  For every $\gamma\in\mathbb R$,
  $G_\gamma(\phi)$ belongs to the Friedrichs form domain.
  Moreover, there are
  $F_{2,\gamma}\in L^2$ and $F_{p,\gamma}\in L^p$ such that
  \begin{equation}
    \label{eq:weak-full-domain-chain}
    H_\alpha G_\gamma(\phi)
    =
    F_{2,\gamma}+F_{p,\gamma}
    \quad\hbox{in }\mathcal D'(\mathbb R^2),
  \end{equation}
  and
  \begin{equation}
    \label{eq:weak-full-domain-bound}
    \|F_{2,\gamma}\|_2
    +
    \|F_{p,\gamma}\|_p
    \leq
    C_{\phi,p}(1+|\gamma|)^2.
  \end{equation}
  For every compactly supported cutoff $\eta$, one also has
  \begin{equation}
    \label{eq:weak-modifier-local-gradient}
    \|\eta\nabla_\alpha G_\gamma(\phi)\|_{2p}
    \leq
    C_{\phi,p,\eta}(1+|\gamma|).
  \end{equation}
  For the fixed cutoff $\eta$, the components in
  \eqref{eq:weak-chain-split} depend continuously on $\gamma$
  in $L^p$ and $L^2$, respectively; moreover,
  $\gamma\mapsto G_\gamma(\phi)$ is continuous in the form
  topology.
  In addition,
  \begin{equation}
    \label{eq:full-domain-form-growth}
    \|H_\alpha^{1/2}G_\gamma(\phi)\|_2
    \leq
    C_\phi(1+|\gamma|).
  \end{equation}
\end{lemma}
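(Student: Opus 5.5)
We split $\phi$ near the pole using \eqref{eq:full-domain-decomposition}. Write
\[
\phi=\phi^\sharp+a,\qquad a=c\,\chi r^{\nu}e^{im_\nu\theta},
\]
where $a$ is the single forbidden trace of lowest order $\nu=\nu_\alpha$ (the term $\chi r^{1/2}$-type traces at $\alpha=1/2$ are treated the same way, with $\nu=1/2$), and $\phi^\sharp\in\mathcal D_\alpha^\sharp$ collects the $\mathcal Y$ part and any allowed trace of order $>1/2$. If $c=0$, Theorem~\ref{the:sharp-full-domain} already gives the statement with $F_{p,\gamma}=0$, so we assume $c\neq0$. Fix a radial cutoff $\eta_0$ equal to one on the support of $\chi$ and compactly supported.

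\textbf{Form domain and gradient bounds.} First we show $G_\gamma(\phi)\in\mathcal Q_\alpha$ with the bound \eqref{eq:full-domain-form-growth}. We have $|G_\gamma(\phi)|=|\phi|$, so $A_\alpha G_\gamma(\phi)\in L^2$ because $\phi\in\dom(H_\alpha)\subset\mathcal Q_\alpha$. The first order chain rule gives $|\nabla_\alpha G_\gamma(\phi)|\le C(1+|\gamma|\|\phi\|_\infty)|\nabla_\alpha\phi|$ pointwise a.e., which yields $\|\nabla_\alpha G_\gamma(\phi)\|_2\lesssim(1+|\gamma|)$. Here $\|\phi\|_\infty<\infty$ by \eqref{eq:full-domain-L-infinity}, and the local gauge $e^{i\chi}$ from the proof of Theorem~\ref{the:sharp-full-domain} is used to apply the chain rule.

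The same pointwise bound gives \eqref{eq:weak-modifier-local-gradient}, since locally $\nabla_\alpha\phi\in L^{2p}$. The $\mathcal D_\alpha^\sharp$ part lies in $L^4\subset L^{2p}_{\mathrm{loc}}$ by Lemma~\ref{lem:covariant-L4}, noting that $2p<4$ because $p<2$. The trace part satisfies $\nabla_\alpha a=O(r^{\nu-1})$, which lies in $L^{2p}_{\mathrm{loc}}$ exactly when $2p(1-\nu)<2$, i.e. $p<(1-\nu)^{-1}$. Consequently the distribution \eqref{eq:extended-covariant-distribution} is defined for $v=G_\gamma(\phi)$, and it coincides with the form pairing on $\mathcal Q_\alpha$.

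\textbf{Splitting $H_\alpha G_\gamma(\phi)$.} On the punctured plane the covariant chain rule \eqref{eq:covariant-second-chain} holds:
\[
H_\alpha G_\gamma(\phi)=DG_\gamma(\phi)H_\alpha\phi-\sum_j D^2G_\gamma(\phi)[\nabla_{\alpha,j}\phi,\nabla_{\alpha,j}\phi]=:\mathcal R_\gamma(\phi).
\]
The first term lies in $L^2$ with norm $\lesssim(1+|\gamma|)\|H_\alpha\phi\|_2$. For the Hessian term we use $|D^2G_\gamma|\lesssim|\gamma|+\gamma^2\|\phi\|_\infty$ together with $|\nabla_\alpha\phi|^2\in L^p_{\mathrm{loc}}$ (from the previous step) and $|\nabla_\alpha\phi|^2\in L^2$ away from the pole (elliptic regularity and Lemma~\ref{lem:covariant-L4} applied to $(1-\eta_0)\phi$). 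We therefore set
\[
F_{p,\gamma}=-\eta_0\sum_j D^2G_\gamma(\phi)[\nabla_{\alpha,j}\phi,\nabla_{\alpha,j}\phi],
\qquad F_{2,\gamma}=\mathcal R_\gamma(\phi)-F_{p,\gamma},
\]
which is the splitting \eqref{eq:weak-chain-split}. This gives \eqref{eq:weak-full-domain-bound} with factor $(1+|\gamma|)^2$. Continuity in $\gamma$ follows by the same zero-set and dominated convergence argument as in the proof of Theorem~\ref{the:sharp-full-domain}: on $\{\phi=0\}$ we have $\nabla\phi=0$ a.e., and $D^2G_\gamma$ is continuous in $\gamma$ off this set. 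Continuity in the form topology follows similarly from the first order formula.

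\textbf{Main obstacle: no point mass at the pole.} The step I expect to be hardest is showing that $\langle H_\alpha G_\gamma(\phi),\zeta\rangle=\langle\mathcal R_\gamma(\phi),\zeta\rangle$ holds for all $\zeta\in C_c^\infty(\mathbb R^2)$, including test functions that do not vanish at $0$. The punctured identity holds for $\zeta\in C_c^\infty(\mathbb R^2\setminus\{0\})$. To pass to general $\zeta$, replace $\zeta$ by $\zeta(1-\kappa(r/\varepsilon))$, with $\kappa$ a cutoff equal to one near zero. The error is
\[
\int\nabla_\alpha G_\gamma(\phi)\cdot\overline{\zeta\,\nabla\kappa(r/\varepsilon)}\,dx
\;+\;\text{terms controlled by dominated convergence},
\]
and we bound it by Hölder:
\[
\|\nabla_\alpha G_\gamma(\phi)\|_{L^{2p}(r<2\varepsilon)}\,\|\varepsilon^{-1}\nabla\kappa(\cdot/\varepsilon)\|_{s}
\lesssim o(1)\cdot\varepsilon^{-1+2/s},
\qquad -1+2/s=1-1/p>0 .
\]
So the error tends to zero. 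There is one subtlety. Since $A_\alpha\zeta\notin L^2$ near $0$ when $\zeta(0)\neq0$, the expression $\nabla_\alpha\zeta$ must be taken in $L^s$, as in \eqref{eq:extended-covariant-distribution}. We must then check that the term $iA_\alpha\zeta$ paired with $\nabla_\alpha G_\gamma(\phi)\in L^{2p}$ converges, which holds because $|x|^{-1}\in L^s_{\mathrm{loc}}$ for $s<2$. This gives \eqref{eq:weak-full-domain-chain}.
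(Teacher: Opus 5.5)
Your proposal is correct and follows the paper's proof essentially step by step. The gauge-covariant first order chain rule gives both the form bound and the local $L^{2p}$ gradient bound, since the trace gradients are $O(r^{\nu_\alpha-1})$. The punctured identity for $\mathcal R_\gamma(\phi)$ is split by a cutoff into $L^p$ and $L^2$ parts. Finally, the cutoff $\zeta\mapsto\zeta(1-\kappa(r/\varepsilon))$ with H\"older in $L^{2p}\times L^{s}$, $s=(2p)'$, yields the factor $\varepsilon^{1-1/p}\to0$ that excludes a point mass at the pole.

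The differences are cosmetic. You put only the localized Hessian term in $L^p$, whereas the paper takes $F_{p,\gamma}=\eta\mathcal R_\gamma(\phi)$; your split is therefore not literally the one in \eqref{eq:weak-chain-split}, but its continuity in $\gamma$ follows by the same argument. Your preliminary reduction to a single nonzero forbidden trace is unnecessary, and at $\alpha=1/2$ it would have to treat both traces as forbidden.
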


\begin{proof}
  Let $\eta\in C_c^\infty(\mathbb R^2)$ be equal to one near 0.
  Write $\phi$ as in
  \eqref{eq:full-domain-decomposition}.  The proof of
  Lemma~\ref{lem:covariant-L4} gives
  $\nabla_\alpha\psi\in L^4$ for the minimal part
  $\psi\in \mathcal{Y}$.  A trace
  vector of order $\mu$ satisfies
  \begin{equation}
    \label{eq:trace-gradient-size}
    \left|
      \nabla_\alpha
      \bigl(\chi r^\mu e^{im\theta}\bigr)
    \right|
    \leq
    C r^{\mu-1}
    \quad\hbox{near zero}.
  \end{equation}
  Since both trace orders are at least $\nu_\alpha$,
  \eqref{eq:trace-gradient-size} and
  \eqref{eq:full-domain-p-range} imply
  \begin{equation}
    \label{eq:full-domain-gradient-integrability}
    \eta\nabla_\alpha\phi\in L^{2p},
    \qquad
    (1-\eta)\nabla_\alpha\phi\in L^4.
  \end{equation}

  Define as a distribution on $\mathbb{R}^{2}\setminus\{0\}$
  \begin{equation}
    \label{eq:weak-chain-right-hand-side}
    \mathcal R_\gamma(\phi)
    =
    DG_\gamma(\phi)H_\alpha\phi
    -
    \sum_{j=1}^2
    D^2G_\gamma(\phi)
    [
      \nabla_{\alpha,j}\phi,
      \nabla_{\alpha,j}\phi
    ].
  \end{equation}
  The local chain rule
  \eqref{eq:covariant-second-chain}, the derivative bounds
  \eqref{eq:G-first-derivative}--%
  \eqref{eq:G-second-derivative}, and
  \eqref{eq:full-domain-gradient-integrability} show that
  \begin{equation}
    \label{eq:weak-chain-split}
    F_{p,\gamma}
    :=
    \eta\mathcal R_\gamma(\phi)
    \in L^p,
    \qquad
    F_{2,\gamma}
    :=
    (1-\eta)\mathcal R_\gamma(\phi)
    \in L^2.
  \end{equation}
  Here $H_\alpha\phi\in L^p$ on bounded sets because
  $p<2$.  These estimates give
  \eqref{eq:weak-full-domain-bound}.

  Gauge equivariance and the first order chain rule give
  \begin{equation}
    \label{eq:full-domain-covariant-first-chain}
    \nabla_{\alpha,j}G_\gamma(\phi)
    =
    DG_\gamma(\phi)\nabla_{\alpha,j}\phi.
  \end{equation}
  Integrability \eqref{eq:full-domain-gradient-integrability} and
  \eqref{eq:full-domain-covariant-first-chain}, the bound for
  $DG_\gamma$, and the $L^4$ estimate away from the pole give
  \eqref{eq:weak-modifier-local-gradient}.
  The standard Sobolev chain rule gives
  $G_\gamma(\phi)\in H^1$, while
  $|G_\gamma(\phi)|=|\phi|$ gives
  $A_\alpha G_\gamma(\phi)\in L^2$.  Thus
  $G_\gamma(\phi)$ belongs to the Friedrichs form domain, and
  \eqref{eq:full-domain-form-growth} follows.

  Let $\gamma_n\to\gamma$.  Since $\phi$ is bounded, the matrices
  $DG_{\gamma_n}(\phi(x))$ converge pointwise to
  $DG_\gamma(\phi(x))$ and remain uniformly bounded.  By
  \eqref{eq:full-domain-covariant-first-chain} and dominated
  convergence,
  \begin{equation*}
    \nabla_\alpha G_{\gamma_n}(\phi)
    \longrightarrow
    \nabla_\alpha G_\gamma(\phi)
    \quad\hbox{in }L^2.
  \end{equation*}
  Pointwise convergence of the modifiers, together with
  $|G_{\gamma_n}(\phi)|=|\phi|$, gives
  \begin{equation*}
    G_{\gamma_n}(\phi)
    \longrightarrow
    G_\gamma(\phi)
    \quad\hbox{in }L^2.
  \end{equation*}
  Hence $G_{\gamma_n}(\phi)\to G_\gamma(\phi)$ in the
  Friedrichs form topology.
  The same dominated convergence argument applies to the first
  term in \eqref{eq:weak-chain-right-hand-side}.  For the Hessian
  term, use pointwise convergence on $\{\phi\neq0\}$.  On
  $\{\phi=0\}$, one has
  $\nabla_\alpha\phi=0$ almost everywhere away from the pole, so
  the possible ambiguity in $D^2G_\gamma(0)$ is harmless.
  The domination needed near the pole and away from it follows
  from \eqref{eq:full-domain-gradient-integrability}.  Therefore
  the two components in \eqref{eq:weak-chain-split} are continuous
  in $L^p$ and $L^2$, respectively.

  We must still prove the second order identity across the
  pole, that is, \eqref{eq:weak-full-domain-chain} holds
  and no point mass at 0 is overlooked
  (i.e.~\eqref{eq:weak-chain-right-hand-side} holds
  in distributions on $\mathbb{R}^{2}$).
  Put $s=(2p)'<2$.  Fix
  $\zeta\in C_c^\infty(\mathbb R^2)$ and choose a radial cutoff
  $\chi_\varepsilon$ which vanishes for
  $r\leq\varepsilon$, equals one for
  $r\geq2\varepsilon$, and satisfies
  $|\nabla\chi_\varepsilon|\leq C\varepsilon^{-1}$.
  Set $\zeta_\varepsilon=\chi_\varepsilon\zeta$.
  The punctured identity gives
  \begin{equation}
    \label{eq:weak-chain-punctured-test}
    \mathfrak q_\alpha
    [G_\gamma(\phi),\zeta_\varepsilon]
    =
    \langle
      F_{2,\gamma}+F_{p,\gamma},
      \zeta_\varepsilon
    \rangle.
  \end{equation}
  On $B_{2\varepsilon}$, the size of $A_\alpha$ and the
  cutoff bounds imply the following estimate.  The ordinary
  product rule gives three terms: the cutoff derivative, the
  smooth gradient of $\zeta$, and the magnetic term.  Their
  $L^s$ norms are, respectively,
  $O(\varepsilon^{2/s-1})$,
  $O(\varepsilon^{2/s})$, and
  $O(\varepsilon^{2/s-1})$.  Hence
  \begin{equation}
    \label{eq:weak-chain-cutoff-bound}
    \|\nabla_\alpha
      (\zeta_\varepsilon-\zeta)\|_{L^s(B_{2\varepsilon})}
    \leq
    C_\zeta\varepsilon^{2/s-1}
    =
    C_\zeta\varepsilon^{1-1/p}.
  \end{equation}
  The exponent in \eqref{eq:weak-chain-cutoff-bound} is
  positive.  For all sufficiently small $\varepsilon$,
  H\"older's inequality gives
  \begin{equation*}
    \left|
      \int_{\mathbb R^2}
      \nabla_\alpha G_\gamma(\phi)
      \mathbin{\cdot}
      \overline{
        \nabla_\alpha(\zeta_\varepsilon-\zeta)
      }
    \right|
    \leq
    \|\eta\nabla_\alpha G_\gamma(\phi)\|_{2p}
    \|\nabla_\alpha
      (\zeta_\varepsilon-\zeta)\|_s
    \longrightarrow0.
  \end{equation*}
  Thus the left hand side of
  \eqref{eq:weak-chain-punctured-test} has a limit.  On the
  right hand side,
  $\zeta_\varepsilon\to\zeta$ in both $L^2$ and $L^{p'}$.
  The limiting left hand side is the covariant distributional
  pairing which defines $H_\alpha G_\gamma(\phi)$; it is finite
  by the $L^{2p}$ and $L^s$ bounds above.  The resulting
  identity holds for every smooth test function and is exactly
  \eqref{eq:weak-full-domain-chain}.  Thus the
  limiting calculation also excludes a distribution supported
  at the pole.
\end{proof}

The preceding weak image and local gradient bounds allow the
pseudoconformal calculation to be extended to the full Friedrichs
domain in distributional form.

\begin{lemma}[Weak pseudoconformal identity]
\label{lem:weak-pseudoconformal}
  Let $I\Subset(0,\infty)$ and let
  \begin{equation*}
    W\in C(I;\mathcal Q_\alpha)\cap C^1(I;L^2).
  \end{equation*}
  Suppose, for some $1<p<2$, that
  \begin{equation}
    \label{eq:weak-pseudoconformal-gradient}
    \nabla_\alpha W
    \in
    L^1_{\mathrm{loc}}
    \bigl(I;L^{2p}_{\mathrm{loc}}\bigr)
  \end{equation}
  and, in $\mathcal D'(I\times\mathbb R^2)$,
  \begin{equation*}
    H_\alpha W(t)
    =
    F(t),
    \qquad
    F\in L^1_{\mathrm{loc}}
    \bigl(I;L^2+L^p\bigr).
  \end{equation*}
  Here, and in the conclusion, the magnetic Laplacian on
  smooth tests is understood through the extended covariant
  pairing \eqref{eq:extended-covariant-distribution}.
  Then, in $\mathcal D'(I\times\mathbb R^2)$,
  \begin{equation}
    \label{eq:weak-pseudoconformal-identity}
    (i\partial_t-H_\alpha)M(t)D(t)W(t)
    =
    M(t)D(t)
    \left(
      i\partial_tW(t)-\frac{F(t)}{4t^2}
    \right).
  \end{equation}
\end{lemma}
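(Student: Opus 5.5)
Since both sides of \eqref{eq:weak-pseudoconformal-identity} are distributions on $I\times\mathbb R^2$, we test them against $\Theta\in C_c^\infty(I\times\mathbb R^2)$. The plan is to transfer everything to the profile variable $y=x/(2t)$. The key observation is that the adjoint of $M(t)D(t)$, namely $D(t)^{-1}M(t)^{-1}$, maps $C_c^\infty$ tests to $C_c^\infty$ tests. Precisely, with
\begin{equation*}
  \widetilde\Theta(t,y)=2it\,e^{-it|y|^2}\Theta(t,2ty)\cdot(\text{Jacobian factor}),
\end{equation*}
the map $\Theta\mapsto\widetilde\Theta$ is a bijection of $C_c^\infty(I\times\mathbb R^2)$, because $I\Subset(0,\infty)$.

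We then split the pairing into its two parts. The time derivative part $\langle i\partial_t(MDW),\Theta\rangle$ only involves $W\in C^1(I;L^2)$. Its computation is literally \eqref{eq:time-calculation}, read in weak form, with the term $y\cdot\nabla_yW$ placed on the test function through an integration by parts in $y$. That integration by parts is legitimate because $W(t)\in\mathcal Q_\alpha\subset H^1$.

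The magnetic part uses the extended covariant pairing \eqref{eq:extended-covariant-distribution}, namely
\begin{equation*}
  \langle H_\alpha(MDW),\Theta\rangle=\int\nabla_\alpha(MDW)\cdot\overline{\nabla_\alpha\Theta}\,dx.
\end{equation*}
We evaluate it through the first order identity \eqref{eq:first-covariant-calculation}. That identity holds a.e. for $W\in H^1$ away from the pole, and the pole is a null set. Applying the same identity to $\Theta$ in adjoint form, and using \eqref{eq:A-homogeneous}--\eqref{eq:A-tangential}, the integrand becomes a combination of three kinds of terms:
\begin{itemize}
  \item $|y|^2W\bar{\widetilde\Theta}$;
  \item cross terms $\frac1t(y W\cdot\overline{P_\alpha\widetilde\Theta}+P_\alpha W\cdot\overline{y\widetilde\Theta})$;
  \item the term $\frac1{4t^2}\nabla_\alpha W\cdot\overline{\nabla_\alpha\widetilde\Theta}$.
\end{itemize}
By hypothesis, and by the definition of the extended pairing, the last term equals $\frac1{4t^2}\langle F(t),\widetilde\Theta\rangle$. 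The cross terms involve only $y\cdot P_\alpha=-iy\cdot\nabla_y$ and $W\in H^1$. After one integration by parts they reproduce exactly the $-\frac it W$ and $-\frac it y\cdot\nabla W$ contributions, which cancel against the time part as in the smooth proof of Lemma~\ref{lem:pseudoconformal}.

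Every integral in this computation is absolutely convergent, and this is where the hypotheses enter:
\begin{itemize}
  \item \eqref{eq:weak-pseudoconformal-gradient} pairs with $\nabla_\alpha\widetilde\Theta\in L^s_{\mathrm{loc}}$, $s=(2p)'<2$, exactly as below \eqref{eq:extended-covariant-distribution};
  \item $F\in L^1_{\mathrm{loc}}(I;L^2+L^p)$ pairs with $\widetilde\Theta\in L^2\cap L^{p'}$;
  \item the remaining terms are controlled by $W\in C(I;\mathcal Q_\alpha)$.
\end{itemize}
This also justifies applying Fubini in $t$.

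The main obstacle is the pole. The covariant gradient of the test function, $\nabla_\alpha\widetilde\Theta$, is not in $L^2$ near $0$, and $W$ need not be in $\dom(H_\alpha)$, so the graph approximation used in Lemma~\ref{lem:pseudoconformal} is unavailable. I would handle this with the cutoff argument from the end of the proof of Lemma~\ref{lem:full-domain-weak-image}. First, replace $\Theta$ by $\chi_\varepsilon\Theta$, which vanishes near $x=0$ uniformly for $t\in I$. For this test the whole computation is an honest pointwise identity on the punctured plane, and no pole terms arise. Then let $\varepsilon\to0$. By \eqref{eq:weak-chain-cutoff-bound}, $\nabla_\alpha(\chi_\varepsilon\Theta-\Theta)\to0$ in $L^s$, with rate $\varepsilon^{1-1/p}$, uniformly in $t$, and the $L^{2p}$ gradient bound kills the difference. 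The zeroth order terms converge by dominated convergence. One point needs care: the cutoff in $x$ becomes a $t$-dependent cutoff $\chi_{\varepsilon/(2t)}$ in $y$. This is harmless because $t$ ranges over the compact set $I$.
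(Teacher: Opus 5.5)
Your argument is correct, but its core step differs from the paper's.

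The paper first proves the identity for tests supported in $I\times(\mathbb R^2\setminus\{0\})$. It regularizes $W$ in space and time on compact punctured cylinders, where the coefficients of $H_\alpha$ are smooth, and applies \eqref{eq:first-covariant-calculation}--\eqref{eq:time-calculation} to the regularized profiles, letting the commutators vanish. It then crosses the pole with the cutoffs $\chi_\varepsilon$, after establishing \eqref{eq:weak-pseudoconformal-U-gradient}.

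You instead move $M(t)D(t)$ onto the test function by unitarity; this preserves $C_c^\infty(I\times\mathbb R^2)$ because $I\Subset(0,\infty)$. You then use only a.e.\ first order calculus for $W(t)\in H^1$, plus tangentiality in the cross terms. Finally, you apply the hypothesis $H_\alpha W=F$ directly with the test $(4t^2)^{-1}\widetilde\Theta$. This removes the regularization and commutator step entirely.

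It also makes your final cutoff paragraph superfluous. $\widetilde\Theta$ is an admissible test on all of $I\times\mathbb R^2$, pole included. Your own integrability checklist already shows that every term is absolutely integrable near $y=0$: use $y\cdot A_\alpha(y)=0$, $|y|\,|A_\alpha(y)|=\alpha$, and the pairing of $\nabla_\alpha W\in L^{2p}_{\mathrm{loc}}$ with $\nabla_\alpha\widetilde\Theta\in L^s_{\mathrm{loc}}$. So the computation holds for every $\Theta$ at once, and the behaviour at the pole is already encoded in the hypothesis on $F$. Keeping the cutoff does no harm, and then your last step coincides with the paper's.

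The trade-off is this. The paper's route reuses the smooth computation verbatim without tracking pairings near the pole, at the cost of a commutator argument and a separate pole-crossing step. Yours is shorter but relies on the explicit structure of $M(t)D(t)$ acting on test functions.

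Two small corrections. First, even to define the left-hand side through the extended pairing you need $\nabla_{\alpha,x}(M(t)D(t)W)\in L^1_{\mathrm{loc}}(I;L^{2p}_{\mathrm{loc}})$. For the $yW$ term in \eqref{eq:first-covariant-calculation} this uses $H^1\subset L^{2p}_{\mathrm{loc}}$, and it should be stated. Second, the cross terms carry $\frac1{2t}$ rather than $\frac1t$. This becomes the $\frac1t$ of \eqref{eq:second-covariant-calculation} after integrating by parts, since $\nabla\cdot y=2$.
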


\begin{proof}
  Put $U=M D W$ and
  $\mathcal K=M D(i\partial_tW-F/(4t^2))$.
  We first work on the punctured cylinder.  If
  $\Psi\in C_c^\infty
  (I\times(\mathbb R^2\setminus\{0\}))$, then
  \begin{equation}
    \label{eq:weak-pseudoconformal-punctured}
    \langle
      (i\partial_t-H_\alpha)U,\Psi
    \rangle
    =
    \langle\mathcal K,\Psi\rangle.
  \end{equation}
  Indeed, localize to a compact punctured cylinder and
  regularize $W$ in space and time.  The coefficients of
  $H_\alpha$ are smooth there, so the commutators created by
  regularization tend to zero in distributions on smaller
  compact punctured cylinders.  For the regularized functions,
  the change of variables $x=2ty$ and
  \eqref{eq:first-covariant-calculation}--%
  \eqref{eq:time-calculation} apply directly.  Homogeneity
  \eqref{eq:A-homogeneous}, tangentiality
  \eqref{eq:A-tangential}, and $H_\alpha W=F$ give
  \eqref{eq:weak-pseudoconformal-punctured} in the limit.

  We now pass across the pole.  On compact time intervals,
  \eqref{eq:first-covariant-calculation},
  \eqref{eq:weak-pseudoconformal-gradient}, and the local
  Sobolev embedding of $\mathcal Q_\alpha$ imply
  \begin{equation}
    \label{eq:weak-pseudoconformal-U-gradient}
    \nabla_{\alpha,x}U
    \in
    L^1_{\mathrm{loc}}
    \bigl(I;L^{2p}_{\mathrm{loc}}\bigr).
  \end{equation}
  Indeed, after $x=2ty$, the local bound is
  $|\nabla_{\alpha,x}U|\lesssim_I
  |yW|+|\nabla_{\alpha,y}W|$.  The first term is controlled by
  the local Sobolev embedding, and the second by
  \eqref{eq:weak-pseudoconformal-gradient}.
  Let $s=(2p)'$ and fix
  $\Psi\in C_c^\infty(I\times\mathbb R^2)$.  Use the cutoffs
  from the proof of
  Lemma~\ref{lem:full-domain-weak-image}.  Uniformly for $t$
  in the support of $\Psi$, one has
  \begin{equation}
    \label{eq:weak-pseudoconformal-cutoff}
    \|\nabla_{\alpha,x}
      ((\chi_\varepsilon-1)\Psi(t))\|_s
    \leq
    C_\Psi\varepsilon^{1-1/p}.
  \end{equation}
  Let $J\Subset I$ and $B_R=\{|x|<R\}$ such that 
  $J \times B_{R}$ contains the support of $\Psi$.  
  Equations \eqref{eq:weak-pseudoconformal-U-gradient} and
  \eqref{eq:weak-pseudoconformal-cutoff}, after one magnetic
  integration by parts, give
  \begin{equation}
    \label{eq:weak-pseudoconformal-H-limit}
      \left|
        \int_J\int_{B_R}
        \nabla_{\alpha,x}U
        \mathbin{\cdot}
        \overline{
          \nabla_{\alpha,x}
          ((1-\chi_\varepsilon)\Psi)
        }
        \,dx\,dt
      \right|
      \leq
      C_\Psi\varepsilon^{1-1/p}
      \|\nabla_{\alpha,x}U\|_{
        L^1(J;L^{2p}(B_R))}
      \longrightarrow0.
  \end{equation}
  Thus \eqref{eq:weak-pseudoconformal-H-limit} passes the
  magnetic second order term to the limit.
  The time derivative also passes to the limit.  Indeed, strong
  continuity of $M(t)$ and $D(t)$ on $L^2$ makes $U$ bounded
  in $C(J;L^2)$, while
  \begin{equation*}
    \|(1-\chi_\varepsilon)\partial_t\Psi\|_{
      L^1(J;L^2)}
    \longrightarrow0.
  \end{equation*}
  Finally, $\mathcal K$ belongs locally in time to
  $L^2+L^p$.  The dilation factors are uniformly bounded on
  $J$, and $(1-\chi_\varepsilon)\Psi$ tends to zero in
  $L^\infty(J;L^2\cap L^{p'})$.  Hence the right hand side of
  \eqref{eq:weak-pseudoconformal-punctured}, tested against
  $\chi_\varepsilon\Psi$, converges to
  $\langle\mathcal K,\Psi\rangle$.  Taking
  $\varepsilon\downarrow0$ proves
  \eqref{eq:weak-pseudoconformal-identity} for every $\Psi$ and
  excludes a distribution supported at the pole.
\end{proof}

\section{Modified final states, sharpness, and the first
correction}
\label{sec:all-channel}

The final state construction has two branches.  The first uses
the admissible class of
Definition~\ref{def:admissible-profile}; the sharp domain
profiles belong to it by
Corollary~\ref{cor:sharp-full-admissibility}.  The second uses
the weak full domain image of
Lemma~\ref{lem:full-domain-weak-image} to include forbidden
traces through a mixed dual Strichartz estimate.  The two
branches are combined in
Subsection~\ref{sec:modified-wave-operator} to prove
Theorem~\ref{the:main-final-state}.
The next subsection contains a proof of the sharpness
Theorem \ref{the:pure-trace-rate-sharpness}.
At the end we construct the first 
correction to the logarithmic profile.

\begin{theorem}[Modified final states for admissible profiles]
\label{the:all-channel}
  Fix $1/2<b<1$ and $\lambda\in\mathbb R$.  Let
  $\phi\in L^2\cap L^\infty$ be an admissible profile.
  There is an
  $\varepsilon_{b,\alpha,\lambda}>0$ such that, if
  \begin{equation}
    \label{eq:general-smallness}
    \|\phi\|_\infty
    <
    \varepsilon_{b,\alpha,\lambda},
  \end{equation}
  then \eqref{eq:main-NLS} has a global solution satisfying
  \begin{equation}
    \label{eq:all-channel-asymptotic}
    \sup_{\tau\geq T}
    \tau^b
    \bigl(
      \|u-u_{\mathrm{ap}}\|_{
        L^\infty([\tau,\infty);L^2)}
      +
      \|u-u_{\mathrm{ap}}\|_{
        L^4([\tau,\infty);L^4)}
    \bigr)
    <
    \infty
  \end{equation}
  for some $T\geq1$, where $u_{\mathrm{ap}}$ is defined by
  \eqref{eq:all-channel-uap}.
  The solution is unique in the class
  \eqref{eq:all-channel-asymptotic}.
  In particular, this result applies to any 
  $\phi\in \mathcal{D}^{\sharp}_{\alpha}$
  satisfying \eqref{eq:general-smallness}, by
  Corollary \ref{cor:sharp-full-admissibility}.
\end{theorem}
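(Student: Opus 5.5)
We write $u=u_{\mathrm{ap}}+z$ and solve for $z$ backward from $t=\infty$ by a contraction argument in a weighted Strichartz space. By Proposition~\ref{prop:approximate-solution}, $z$ must solve
\begin{equation*}
  i\partial_t z-H_\alpha z
  =
  \lambda\bigl(|u_{\mathrm{ap}}+z|(u_{\mathrm{ap}}+z)-|u_{\mathrm{ap}}|u_{\mathrm{ap}}\bigr)
  -R(t),
\end{equation*}
with $z(t)\to0$ as $t\to\infty$. We therefore study the Duhamel map
\begin{equation*}
  \Phi(z)(t)
  =
  i\int_t^\infty e^{-i(t-s)H_\alpha}
  \bigl[\lambda N(z)(s)-R(s)\bigr]\,ds,
\end{equation*}
where $N(z)$ denotes the bracketed difference. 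We set it on the space $X_T$ of $z$ with norm
\begin{equation*}
  \|z\|_{X_T}
  =
  \sup_{\tau\ge T}\tau^b\bigl(\|z\|_{L^\infty([\tau,\infty);L^2)}+\|z\|_{L^4([\tau,\infty);L^4)}\bigr).
\end{equation*}
A fixed point gives a solution on $[T,\infty)$, which is then extended globally by Proposition~\ref{prop:global-L2-theory}. Uniqueness in the class \eqref{eq:all-channel-asymptotic} follows from the same difference estimates.

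\emph{The source term.} Proposition~\ref{prop:approximate-solution} gives $\|R(s)\|_2\le C_\phi s^{-2}(1+\log s)^2$. Applying \eqref{eq:retarded-Strichartz} on $[\tau,\infty)$ gives a contribution $\lesssim \tau^{-1}(1+\log\tau)^2$, which is $\le C_\phi \tau^{-b}$ since $b<1$. Note that $C_\phi$ depends on the graph norm, which may be large; this is absorbed by choosing $T$ large, because $\tau^{-1+b}(\log\tau)^2\to0$.

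\emph{The nonlinear terms.} We split $N(z)=N_1+N_2$, with $|N_1|\lesssim |u_{\mathrm{ap}}||z|$ for the linear part and $|N_2|\lesssim |z|^2$. Since $\|u_{\mathrm{ap}}(s)\|_\infty=\|\phi\|_\infty/(2s)$, we have
\begin{equation*}
  \|N_1(s)\|_2\le \frac{\|\phi\|_\infty}{2s}\|z(s)\|_2,
\end{equation*}
so
\begin{equation*}
  \int_\tau^\infty\|N_1\|_2\,ds
  \le \|\phi\|_\infty\|z\|_{X_T}\int_\tau^\infty s^{-1-b}\,ds
  = \frac{\|\phi\|_\infty}{b}\,\tau^{-b}\|z\|_{X_T}.
\end{equation*}
Using \eqref{eq:retarded-Strichartz}, the linear term is therefore bounded by $C_\alpha|\lambda|\|\phi\|_\infty b^{-1}\|z\|_{X_T}$. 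This is a contraction factor $<1/4$ precisely when $\|\phi\|_\infty<\varepsilon_{b,\alpha,\lambda}$, and this fixes $\varepsilon$ independently of $T$ and of the graph norm.

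This linear long range term is the main obstacle. It carries no extra decay, so the smallness must come entirely from $\|\phi\|_\infty$. One must therefore make sure that $\sup_\tau$ commutes correctly: we estimate on $[\tau,\infty)$ for each $\tau\ge T$, rather than only at $T$.

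For the quadratic term we use the dual pair $(4/3,4/3)$ in \eqref{eq:mixed-Retarded}-type form, i.e.\ the admissible pair $(4,4)$ dual in Proposition~\ref{prop:Strichartz}. This gives
\begin{equation*}
  \||z|^2\|_{L^{4/3}([\tau,\infty);L^{4/3})}
  \le \|z\|_{L^4L^4}\,\|z\|_{L^2([\tau,\infty);L^2)}.
\end{equation*}
Dyadically decomposing $[\tau,\infty)$, we get $\|z\|_{L^2([\tau,\infty);L^2)}\lesssim \tau^{1/2-b}\|z\|_{X_T}$, using $b>1/2$. Hence the quadratic term is bounded by $\tau^{-b}\cdot\tau^{1/2-b}\|z\|_{X_T}^2\le T^{1/2-b}\tau^{-b}\|z\|_{X_T}^2$, which is small for $T$ large.

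\emph{Closing the argument.} Differences are handled in the same way via $\bigl||u|u-|v|v\bigr|\le(|u|+|v|)|u-v|$. Thus $\Phi$ is a contraction on a ball of $X_T$ of radius $2C_\phi$. The resulting $z$ solves the Duhamel equation on $[T,\infty)$, and $u=u_{\mathrm{ap}}+z$ is a genuine solution in $C([T,\infty);L^2)\cap L^6_{\mathrm{loc}}L^3$. The last claim follows from Corollary~\ref{cor:sharp-full-admissibility}.
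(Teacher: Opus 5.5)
Your proposal is correct and follows the paper's proof: you use the same decomposition $u=u_{\mathrm{ap}}+z$ and the same backward Duhamel contraction in $X_{b,T}$, absorbing the long range linear term by smallness of $\|\phi\|_\infty$ and making the residual and quadratic terms small by taking $T$ large. The only cosmetic difference is that you place $|z|^2$ in the dual $(4/3,4/3)$ Strichartz norm (the case $p=4/3$ of \eqref{eq:mixed-retarded-Strichartz}) via H\"older in $L^4_{t,x}\times L^2_{t,x}$, whereas the paper puts it in $L^1_tL^2_x$ via the $L^2_tL^4_x$ bound of Lemma~\ref{lem:weighted-quadratic}; both routes give the same factor $\tau^{1/2-2b}$.
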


\begin{theorem}[Modified final states for all Friedrichs traces]
\label{the:full-domain-final-states}
  Let $0<\alpha<1$ and
  $1<p<\frac1{1-\nu_\alpha}$, and put
  \begin{equation}
    \label{eq:full-domain-decay-exponent}
    \delta_p
    =
    \frac1{q_p'}
    =
    \frac32-\frac1p.
  \end{equation}
  Fix
  \begin{equation}
    \label{eq:full-domain-b-range}
    \frac12<b<\delta_p
  \end{equation}
  and $\lambda\in\mathbb R$.  There is
  $\varepsilon_{b,\alpha,\lambda}>0$, independent of $p$,
  such that every
  $\phi\in\dom(H_\alpha)$ with
  \begin{equation}
    \label{eq:full-domain-smallness}
    \|\phi\|_\infty
    <
    \varepsilon_{b,\alpha,\lambda}
  \end{equation}
  determines a unique global solution of
  \eqref{eq:main-NLS} in the final state class
  \eqref{eq:all-channel-asymptotic}.
\end{theorem}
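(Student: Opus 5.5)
We will solve the final state problem backward from $t=\infty$ for the remainder $z=u-u_{\mathrm{ap}}$, exactly as in the admissible branch (Theorem~\ref{the:all-channel}); the only change is the treatment of the residual. With $W(t)=G_{\lambda\log t/2}(\phi)$, Lemma~\ref{lem:full-domain-weak-image} gives $W(t)\in\mathcal Q_\alpha$, the bound $\|\eta\nabla_\alpha W(t)\|_{2p}\lesssim 1+\log t$, and the split $H_\alpha W(t)=F_2(t)+F_p(t)$ with $\|F_2\|_2+\|F_p\|_p\le C_{\phi,p}(1+\log t)^2$. It also gives continuity in $t$ of these pieces. Together with $i\partial_tW=\frac{\lambda}{2t}|W|W\in C(L^2)$, this verifies the hypotheses of Lemma~\ref{lem:weak-pseudoconformal}. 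Combining it with Lemma~\ref{lem:nonlinear-scaling} yields
\[
R(t)=(i\partial_t-H_\alpha)u_{\mathrm{ap}}-\lambda|u_{\mathrm{ap}}|u_{\mathrm{ap}}=-\frac1{4t^2}M(t)D(t)\bigl(F_2(t)+F_p(t)\bigr)
\]
in distributions. Since $M(t)$ is a unimodular multiplier and $\|D(t)f\|_p=(2t)^{-1}(2t)^{2/p}\|f\|_p$, we get $\|R_2(t)\|_2\lesssim t^{-2}(1+\log t)^2$ and $\|R_p(t)\|_p\lesssim t^{-3+2/p}(1+\log t)^2$.

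The remainder $z$ must solve the Duhamel equation
\[
z(t)=i\int_t^\infty e^{-i(t-s)H_\alpha}\bigl[\lambda\bigl(|u_{\mathrm{ap}}+z|(u_{\mathrm{ap}}+z)-|u_{\mathrm{ap}}|u_{\mathrm{ap}}\bigr)+R(s)\bigr]\,ds .
\]
We look for a fixed point in the space $X_T$ with norm $\sup_{\tau\ge T}\tau^b\bigl(\|z\|_{L^\infty([\tau,\infty);L^2)}+\|z\|_{L^4([\tau,\infty);L^4)}\bigr)$. The $R_2$ contribution is handled by \eqref{eq:retarded-Strichartz} and is $O(\tau^{-1}(1+\log\tau)^2)$ on $[\tau,\infty)$. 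For $R_p$ we use \eqref{eq:mixed-retarded-Strichartz} on $I=[\tau,\infty)$:
\[
\|R_p\|_{L^{q_p'}([\tau,\infty);L^p)}\lesssim\Bigl(\int_\tau^\infty s^{(-3+2/p)q_p'}(1+\log s)^{2q_p'}ds\Bigr)^{1/q_p'}\lesssim \tau^{-3+2/p+1/q_p'}(1+\log\tau)^2=\tau^{-\delta_p}(1+\log\tau)^2 .
\]
This is finite and $o(\tau^{-b})$ precisely because $b<\delta_p$, so $\tau^b$ times this contribution tends to zero. Note also that $\delta_p\le 1$, since $p<2$, and $\delta_p>1/2$.

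The nonlinear terms are estimated exactly as in Theorem~\ref{the:all-channel}. The linear part is pointwise bounded by $2|\lambda||u_{\mathrm{ap}}||z|\le |\lambda|t^{-1}\|\phi\|_\infty|z|$. In $L^1([\tau,\infty);L^2)$ this gives $\int_\tau^\infty s^{-1-b}\,ds\,\|z\|_{X}=b^{-1}\tau^{-b}\|z\|_X$, so it contracts once $|\lambda|\|\phi\|_\infty/b$ is small. This fixes $\varepsilon_{b,\alpha,\lambda}$ independently of $p$ and of the graph norm of $\phi$. The quadratic part $|z|^2$ is placed in the dual pair $L^{4/3}_tL^{4/3}_x$ by Hölder, giving $\|z\|_{L^{8/3}L^{8/3}}^2$. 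Interpolating the $L^\infty L^2$ and $L^4L^4$ norms, this is bounded by $\tau^{-2b+1/4\cdot(\ldots)}$, and the resulting decay beats $\tau^{-b}$ since $b>1/2$. Alternatively we reuse whatever pair is used in Theorem~\ref{the:all-channel}, where the same term appears. Taking $T$ large (depending on $C_{\phi,p}$) maps a ball of $X_T$ into itself contractively. The local solution on $[T,\infty)$ is then extended to all of $\mathbb R$ by Proposition~\ref{prop:global-L2-theory}. Uniqueness in the class \eqref{eq:all-channel-asymptotic} follows from the same difference estimate: two solutions have difference in $X_{T'}$ satisfying the homogeneous contraction inequality, since the residual cancels.

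The main obstacle we expect is the rigorous justification that $u_{\mathrm{ap}}+z$ is a genuine ($L^2$ Strichartz) solution when $R\notin L^1L^2$. Concretely, we must check that the distributional identity of Lemma~\ref{lem:weak-pseudoconformal} combined with Duhamel's formula yields the integral equation with the mixed source. We plan to pair the Duhamel formula with smooth tests and use that the retarded operator applied to $R_p\in L^{q_p'}L^p$ is the limit of truncations $\int_t^{S}$. The identity $(i\partial_t-H_\alpha)\int_t^S e^{-i(t-s)H_\alpha}G\,ds=-G$ in $\mathcal D'$ holds for $G\in L^{q'}L^p$ by density from $L^1L^2$, and Strichartz continuity then lets $S\to\infty$. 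This identifies $u$ as a distributional, hence $L^2$ mild, solution.
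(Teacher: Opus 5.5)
Your proposal is correct and follows the paper's proof essentially step by step. It uses the split $R=R_2+R_p$ obtained from Lemma~\ref{lem:full-domain-weak-image} and the weak pseudoconformal identity, the mixed retarded Strichartz estimate giving the tail $\tau^{-\delta_p}(1+\log\tau)^2=o(\tau^{-b})$, and the unchanged, $p$-independent nonlinear contraction in $X_{b,T}$, with uniqueness from the same difference estimate on a later tail.

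There are two minor slips.
\begin{itemize}
\item The Duhamel source should carry $-R$, not $+R$.
\item Your $L^{4/3}L^{4/3}$ bound for $|z|^2$ does give $\tau^{1/2-2b}$, but only through H\"older in time on dyadic intervals using the weight. Pure interpolation between $L^\infty L^2$ and $L^4L^4$ cannot reach $L^{8/3}L^{8/3}$, so it is simpler to quote Lemma~\ref{lem:weighted-quadratic}, as the paper does.
\end{itemize}
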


Since $p$ may approach $(1-\nu_\alpha)^{-1}$ from below,
Theorem~\ref{the:full-domain-final-states} allows every
\begin{equation}
  \label{eq:full-domain-optimal-b-range}
  \frac12<b<\frac12+\nu_\alpha.
\end{equation}
At half flux this is the entire range $1/2<b<1$ of the
weighted contraction.  Away from half flux, the sharp domain
still gives the stronger range $1/2<b<1$ through
Theorem~\ref{the:all-channel}.

\subsection{The weighted final state space}

For $T\geq1$ and $b>1/2$, define
\begin{equation}
  \label{eq:X-space}
  \|z\|_{X_{b,T}}
  =
  \sup_{\tau\geq T}
  \tau^b
  \bigl(
    \|z\|_{L^\infty([\tau,\infty);L^2)}
    +
    \|z\|_{L^4([\tau,\infty);L^4)}
  \bigr).
\end{equation}
We shall use the next elementary estimate 
(but only for $b>1/2$):

\begin{lemma}[Weighted quadratic estimate]
\label{lem:weighted-quadratic}
  If $b>1/2$ and $z\in X_{b,T}$, then for every
  $\tau\geq T$,
  \begin{equation}
    \label{eq:L2L4-weighted}
    \|z\|_{L^2([\tau,\infty);L^4)}
    \leq
    C_b\tau^{1/4-b}\|z\|_{X_{b,T}}.
  \end{equation}
  Hence,
  \begin{equation}
    \label{eq:z-square-estimate}
    \||z|^2\|_{L^1([\tau,\infty);L^2)}
    \leq
    C_b\tau^{1/2-2b}
    \|z\|_{X_{b,T}}^2.
  \end{equation}
\end{lemma}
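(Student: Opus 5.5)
The plan is a dyadic decomposition of the half line $[\tau,\infty)$, followed by H\"older's inequality in time on each dyadic block. Put $N=\|z\|_{X_{b,T}}$ and, for $k\geq0$, let $I_k=[2^k\tau,2^{k+1}\tau)$. Since $2^k\tau\geq T$, the definition \eqref{eq:X-space} gives
\begin{equation*}
  \|z\|_{L^4(I_k;L^4)}
  \leq
  \|z\|_{L^4([2^k\tau,\infty);L^4)}
  \leq
  (2^k\tau)^{-b}N.
\end{equation*}
H\"older in time on an interval of length $|I_k|=2^k\tau$ then yields
\begin{equation*}
  \|z\|_{L^2(I_k;L^4)}
  \leq
  |I_k|^{1/4}\|z\|_{L^4(I_k;L^4)}
  \leq
  (2^k\tau)^{1/4-b}N.
\end{equation*}

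Next I square and sum over $k$:
\begin{equation*}
  \|z\|_{L^2([\tau,\infty);L^4)}^2
  \leq
  N^2\tau^{1/2-2b}\sum_{k\geq0}2^{k(1/2-2b)}.
\end{equation*}
The geometric series converges because $b>1/4$, which certainly holds since $b>1/2$. Taking square roots gives \eqref{eq:L2L4-weighted} with $C_b=(1-2^{1/2-2b})^{-1/2}$.

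The second estimate is then immediate. Pointwise in time, $\||z|^2\|_{L^2}=\|z\|_{L^4}^2$. Integrating over $[\tau,\infty)$ therefore gives
\begin{equation*}
  \||z|^2\|_{L^1([\tau,\infty);L^2)}
  =
  \|z\|_{L^2([\tau,\infty);L^4)}^2,
\end{equation*}
and \eqref{eq:z-square-estimate} follows from \eqref{eq:L2L4-weighted}, with the constant squared.

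There is no real obstacle here. The only point needing care is that one must use the tail norm on each block starting at $2^k\tau$, rather than the single bound at $\tau$. The single bound would lose all decay when integrated over an infinite interval. The assumption $b>1/2$ is stronger than what this lemma needs ($b>1/4$ suffices). It matters only later, where it makes $\tau^{1/2-2b}$ decay faster than $\tau^{-b}$, so that the quadratic term closes in $X_{b,T}$.
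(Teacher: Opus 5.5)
Your proposal is correct and follows essentially the same route as the paper: a dyadic decomposition $I_k=[2^k\tau,2^{k+1}\tau]$, H\"older in time on each block using the tail bound from $X_{b,T}$ at $2^k\tau$, then squaring and summing the geometric series. The paper dismisses the second estimate as trivial; your identity $\||z|^2\|_{L^1L^2}=\|z\|_{L^2L^4}^2$ is exactly that step, and your remark that $b>1/4$ already suffices here is accurate.
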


\begin{proof}
  Let
  $I_k=[2^k\tau,2^{k+1}\tau]$.  H\"older's inequality
  in time and \eqref{eq:X-space} give
  \begin{equation}
    \label{eq:dyadic-L2L4}
      \|z\|_{L^2(I_k;L^4)}
      \leq
      |I_k|^{1/4}
      \|z\|_{L^4(I_k;L^4)}
      \leq
      (2^k\tau)^{1/4-b}
      \|z\|_{X_{b,T}}.
  \end{equation}
  Squaring and summing in $k$ proves
  \eqref{eq:L2L4-weighted},
  and \eqref{eq:z-square-estimate} follows trivially.
\end{proof}

\subsection{The contraction}

\begin{proof}[Proof of Theorem~\ref{the:all-channel}]
  Let $u_{\mathrm{ap}}$ be \eqref{eq:uap-definition} or, in
  explicit form, \eqref{eq:all-channel-uap}.  We want
  to construct
  \begin{equation}
    \label{eq:uap-plus-z}
    u=u_{\mathrm{ap}}+z.
  \end{equation}
  With $N(w)=|w|w$ and $R(t)$ defined by
  \eqref{eq:residual-definition}, the equation for $z$ is
  \begin{equation}
    \label{eq:z-equation}
    (i\partial_t-H_\alpha)z
    =
    \lambda
    \bigl(
      N(u_{\mathrm{ap}}+z)-N(u_{\mathrm{ap}})
    \bigr)
    -
    R.
  \end{equation}
  The final condition $z(\infty)=0$ leads to reformulate
  \eqref{eq:z-equation} as a fixed point problem
  \begin{equation}
    \label{eq:fixed-point-map}
    z=
    \Phi(z)(t)
    :=
    i\int_t^\infty
    e^{-i(t-s)H_\alpha}
    \bigl[
      \lambda
      \bigl(
        N(u_{\mathrm{ap}}+z)
        -
        N(u_{\mathrm{ap}})
      \bigr)
      -
      R(s)
    \bigr]\,ds.
  \end{equation}

  We use
  \begin{equation}
    \label{eq:N-Lipschitz}
    |N(v+w)-N(v)|
    \leq
    C\bigl(|v||w|+|w|^2\bigr).
  \end{equation}
  The approximate solution satisfies
  \begin{equation}
    \label{eq:uap-L-infinity}
    \|u_{\mathrm{ap}}(s)\|_\infty
    =
    \frac1{2s}\|\phi\|_\infty.
  \end{equation}
  Hence, for $\tau\geq T$,
  \begin{equation}
    \label{eq:linear-long-range-term}
      \|u_{\mathrm{ap}}z\|_{
        L^1([\tau,\infty);L^2)}
      \leq
      \frac{\|\phi\|_\infty}{2}
      \int_\tau^\infty
      s^{-1}\|z(s)\|_2\,ds
      \leq
      \frac{\|\phi\|_\infty}{2b}
      \tau^{-b}\|z\|_{X_{b,T}}.
  \end{equation}
  Lemma~\ref{lem:weighted-quadratic} gives
  \begin{equation}
    \label{eq:quadratic-remainder-term}
    \||z|^2\|_{L^1([\tau,\infty);L^2)}
    \leq
    C_b\tau^{1/2-2b}
    \|z\|_{X_{b,T}}^2.
  \end{equation}

  Finally, once $T$ is sufficiently large,
  Proposition~\ref{prop:approximate-solution} gives
  \begin{equation}
    \label{eq:residual-tail}
      E_T
      :=
      \sup_{\tau\geq T}
      \tau^b
      \|R\|_{L^1([\tau,\infty);L^2)}
      \leq
      C_{\phi,b}
      T^{b-1}(1+\log T)^2.
  \end{equation}
  In particular, $E_T\to0$ as $T\to\infty$.

  Apply the Strichartz estimate \eqref{eq:retarded-Strichartz} 
  on every interval $[\tau,\infty)$,
  multiply by $\tau^{b}$ and then take the supremum in $\tau$.
  Estimates \eqref{eq:linear-long-range-term},
  \eqref{eq:quadratic-remainder-term}, and
  \eqref{eq:residual-tail} imply
  \begin{equation}
    \label{eq:fixed-point-bound}
    \|\Phi(z)\|_{X_{b,T}}
    \leq
    C_\alpha
    \biggl[
      \frac{|\lambda|\|\phi\|_\infty}{b}
      \|z\|_{X_{b,T}}
      +
      |\lambda|T^{1/2-b}
      \|z\|_{X_{b,T}}^2
      +
      E_T
    \biggr].
  \end{equation}
  The difference estimate has the same form:
  \begin{equation}
    \label{eq:fixed-point-difference}
      \|\Phi(z_1)-\Phi(z_2)\|_{X_{b,T}}
      \leq
      C_\alpha|\lambda|
      \biggl[
        \frac{\|\phi\|_\infty}{b}
        +
        T^{1/2-b}
        \|z_1\|_{X_{b,T}}
        +
        T^{1/2-b}
        \|z_2\|_{X_{b,T}}
      \biggr]
      \|z_1-z_2\|_{X_{b,T}}.
  \end{equation}

  Choose $\varepsilon_{b,\alpha,\lambda}$ so that the first
  coefficient in \eqref{eq:fixed-point-difference} is less than
  $1/4$.  If $\lambda=0$, no smallness is needed.  Next choose
  $T$ large.  Since $b>1/2$,
  $T^{1/2-b}\to0$, while $E_T\to0$.  Therefore
  $\Phi$ is a contraction on the ball
  \begin{equation}
    \label{eq:fixed-point-ball}
    \left\{
      z\in X_{b,T}:
      \|z\|_{X_{b,T}}
      \leq
      2C_\alpha E_T
    \right\}.
  \end{equation}
  This gives a unique solution on $[T,\infty)$ satisfying
  \eqref{eq:all-channel-asymptotic}.

  We now identify this tail solution with the global $L^2$
  solution.  The inhomogeneous Strichartz construction makes
  the Duhamel term continuous in $L^2$.  Moreover,
  $z\in L^\infty L^2\cap L^4L^4$ implies
  $z\in L^6_{\mathrm{loc}}L^3$ by interpolation.  Since
  $\phi\in L^2\cap L^\infty\subset L^3$ and
  $\|u_{\mathrm{ap}}(t)\|_3
  =(2t)^{-1/3}\|\phi\|_3$, the same local bound holds for
  $u_{\mathrm{ap}}$.  Thus the tail solution belongs to
  \begin{equation*}
    C([T,\infty);L^2)
    \cap
    L^6_{\mathrm{loc}}([T,\infty);L^3),
  \end{equation*}
  which is the uniqueness class in
  Proposition~\ref{prop:global-L2-theory}.  Applying that
  proposition with the datum $u(T)$ extends the solution to
  all real times and identifies the extension with the tail
  solution on $[T,\infty)$.

  Finally, any solution of \eqref{eq:z-equation} belonging to
  the stated final state class satisfies
  the fixed point equation $z=\Phi(z)$.
  To see this, integrate its remainder
  equation from $t$ to a finite terminal time $S$ and let
  $S\to\infty$; the terminal remainder tends to zero in
  $L^2$, while the residual and nonlinear source converge in
  $L^1L^2$.  As a consequence, by \eqref{eq:fixed-point-difference},
  we obtain uniqueness of solutions of \eqref{eq:z-equation}
  belonging to the final class.
\end{proof}

We now derive the mixed residual estimate needed in the proof of
Theorem~\ref{the:full-domain-final-states}.

\begin{proposition}[Full domain residual]
\label{prop:full-domain-residual}
  Under the hypotheses of
  Theorem~\ref{the:full-domain-final-states}, define $W$ by
  \eqref{eq:W-definition} and $u_{\mathrm{ap}}$ by
  \eqref{eq:uap-definition}.  There are
  $R_2(t)\in L^2$ and $R_p(t)\in L^p$ such that
  \begin{equation}
    \label{eq:full-domain-residual-identity}
    (i\partial_t-H_\alpha)u_{\mathrm{ap}}
    -
    \lambda|u_{\mathrm{ap}}|u_{\mathrm{ap}}
    =
    R_2+R_p
  \end{equation}
  in distributions.  For $t\geq1$,
  \begin{equation}
    \label{eq:full-domain-residual-pointwise}
    \|R_2(t)\|_2
    \leq
    C_{\phi,p}t^{-2}(1+\log t)^2,
    \qquad
    \|R_p(t)\|_p
    \leq
    C_{\phi,p}t^{-3+2/p}(1+\log t)^2.
  \end{equation}
  Hence, for every $\tau\geq1$,
  \begin{equation}
    \label{eq:full-domain-residual-tail}
    \|R_2\|_{L^1([\tau,\infty);L^2)}
    +
    \|R_p\|_{L^{q_p'}([\tau,\infty);L^p)}
    \leq
    C_{\phi,p}
    \tau^{-\delta_p}(1+\log\tau)^2.
  \end{equation}
\end{proposition}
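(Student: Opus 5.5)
The plan is to combine Lemma~\ref{lem:full-domain-weak-image} with the weak pseudoconformal identity (Lemma~\ref{lem:weak-pseudoconformal}) and the nonlinear scaling Lemma~\ref{lem:nonlinear-scaling}. Fix $t\ge1$ and $\gamma=\gamma(t)=\lambda\log t/2$, so that $W(t)=G_{\gamma(t)}(\phi)$. Lemma~\ref{lem:full-domain-weak-image} gives $W(t)\in\mathcal Q_\alpha$ and the splitting $H_\alpha W(t)=F_{2,\gamma(t)}+F_{p,\gamma(t)}$, with
\begin{equation*}
  \|F_{2,\gamma(t)}\|_2+\|F_{p,\gamma(t)}\|_p\le C_{\phi,p}(1+\log t)^2 .
\end{equation*}
It also gives $\|\eta\nabla_\alpha W(t)\|_{2p}\le C(1+\log t)$ for every compactly supported cutoff $\eta$.

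Next I check the hypotheses of Lemma~\ref{lem:weak-pseudoconformal} on every $I\Subset[1,\infty)$. Form continuity in $\gamma$, which is part of Lemma~\ref{lem:full-domain-weak-image}, gives $W\in C(I;\mathcal Q_\alpha)$. The ODE \eqref{eq:W-ODE} together with $\phi\in L^2\cap L^\infty$ gives $W\in C^1(I;L^2)$. The local $L^{2p}$ gradient bound gives \eqref{eq:weak-pseudoconformal-gradient}. Finally $F=F_{2,\gamma}+F_{p,\gamma}$ lies in $L^1_{\mathrm{loc}}(I;L^2+L^p)$; this needs measurability in $t$, which follows from the continuity in $\gamma$ of both components. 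Lemma~\ref{lem:weak-pseudoconformal}, combined with \eqref{eq:W-ODE} and Lemma~\ref{lem:nonlinear-scaling}, then yields \eqref{eq:full-domain-residual-identity} in distributions with
\begin{equation*}
  R_2(t)=-\frac{1}{4t^2}M(t)D(t)F_{2,\gamma(t)},\qquad
  R_p(t)=-\frac{1}{4t^2}M(t)D(t)F_{p,\gamma(t)} .
\end{equation*}
Any extended pairings involved are consistent with those in Lemma~\ref{lem:full-domain-weak-image}, since both come from the same covariant form \eqref{eq:extended-covariant-distribution}.

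For the pointwise bounds \eqref{eq:full-domain-residual-pointwise}, unitarity of $M(t)D(t)$ on $L^2$ gives the $R_2$ estimate at once. For $R_p$, the modulation $M(t)$ has modulus one. The dilation satisfies
\begin{equation*}
  \|D(t)f\|_p=(2t)^{-1}(2t)^{2/p}\|f\|_p .
\end{equation*}
Hence $\|R_p(t)\|_p\le C t^{-2}\,t^{-1+2/p}(1+\log t)^2=Ct^{-3+2/p}(1+\log t)^2$.

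It remains to integrate in time. For $R_2$, $\int_\tau^\infty t^{-2}(1+\log t)^2\,dt\lesssim\tau^{-1}(1+\log\tau)^2$, and $\tau^{-1}\le\tau^{-\delta_p}$ because $\delta_p<1$ (as $p<2$). For $R_p$, raise the pointwise bound to the power $q_p'=2p/(3p-2)$. The time exponent becomes $(-3+2/p)q_p'$, which equals $-2$; indeed $(2-3p)/p\cdot 2p/(3p-2)=-2$. The integral is therefore $\lesssim\tau^{-1}(1+\log\tau)^{2q_p'}$, and taking the $1/q_p'$ power gives $\tau^{-1/q_p'}(1+\log\tau)^2=\tau^{-\delta_p}(1+\log\tau)^2$.

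The computation itself is routine. The step I expect to be delicate is verifying the regularity hypotheses of Lemma~\ref{lem:weak-pseudoconformal}, in particular the time continuity and measurability of the splitting and the local $L^{2p}$ gradient bound uniformly on compact time intervals. I would handle these through the continuity statements in Lemma~\ref{lem:full-domain-weak-image}.
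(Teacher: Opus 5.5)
Your proposal is correct and follows the paper's proof essentially step by step. The paper also applies Lemma~\ref{lem:full-domain-weak-image} at $\gamma=\lambda\log t/2$ to get the $L^2+L^p$ splitting, then uses Lemma~\ref{lem:weak-pseudoconformal} on compact time intervals with \eqref{eq:W-ODE} and nonlinear scaling to obtain $R_j(t)=-\frac{1}{4t^2}M(t)D(t)F_j(t)$, and concludes with the dilation law $\|D(t)f\|_p=(2t)^{2/p-1}\|f\|_p$ and the balance $q_p'(3-2/p)=2$. Your added remarks on measurability in $t$, via continuity in $\gamma$, only make explicit what the paper leaves implicit.
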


\begin{proof}
  Lemma~\ref{lem:full-domain-weak-image}, with
  $\gamma=\lambda\log(t)/2$, gives
  \begin{equation}
    \label{eq:full-domain-W-image}
    H_\alpha W(t)
    =
    F_2(t)+F_p(t),
    \qquad
    \|F_2(t)\|_2+\|F_p(t)\|_p
    \leq
    C_{\phi,p}(1+\log t)^2.
  \end{equation}
  The form continuity,
  \eqref{eq:weak-modifier-local-gradient}, and
  \eqref{eq:W-ODE} imply that the hypotheses of
  Lemma~\ref{lem:weak-pseudoconformal} are fulfilled.
  Hence the weak pseudoconformal identity holds with
  $H_\alpha W=F_2+F_p$.  The lemma is local in time: apply it
  on every compact subinterval of $(1,\infty)$ to obtain the
  distributional identity on the full interval.

  Equation \eqref{eq:W-ODE} and nonlinear scaling now give
  \eqref{eq:full-domain-residual-identity}, where
  \begin{equation}
    \label{eq:full-domain-residual-components}
    R_j(t)
    =
    -\frac1{4t^2}M(t)D(t)F_j(t),
    \qquad
    j\in\{2,p\}.
  \end{equation}
  Since
  \begin{equation}
    \label{eq:dilation-Lp-scaling}
    \|D(t)f\|_p
    =
    (2t)^{2/p-1}\|f\|_p,
  \end{equation}
  estimates \eqref{eq:full-domain-W-image},
  \eqref{eq:full-domain-residual-components}, and
  \eqref{eq:dilation-Lp-scaling} imply
  \eqref{eq:full-domain-residual-pointwise}.
  Finally, noticing that
  \begin{equation}
    \label{eq:full-domain-time-balance}
    q_p'\left(3-\frac2p\right)
    =
    2,
  \end{equation}
  we see that integration in time gives the $L^{q_p'}L^p$ term 
  in \eqref{eq:full-domain-residual-tail}.  
  On the other hand, the $L^1L^2$ term
  decays like $\tau^{-1}(1+\log\tau)^2$, which is stronger
  because $\delta_p<1$.
\end{proof}

The smallness threshold for $\|\phi\|_\infty$ is independent of $p$.
For a profile with
a nonzero lowest trace, the residual constant $C_{\phi,p}$ and
the admissible starting time generally deteriorate as
$p\uparrow(1-\nu_\alpha)^{-1}$.  Thus approaching the rate
threshold costs time and constants, but not additional
$L^\infty$ smallness.

\begin{proof}[Proof of
  Theorem~\ref{the:full-domain-final-states}]
  Use the same fixed point map \eqref{eq:fixed-point-map}, with
  $R=R_2+R_p$ from
  Proposition~\ref{prop:full-domain-residual}.  The pair dual
  to the $L^{q_p'}L^p$ residual is
  \begin{equation}
    \label{eq:full-domain-dual-pair}
    q_p
    =
    \frac{2p}{2-p},
    \qquad
    r_p
    =
    \frac p{p-1}.
  \end{equation}
  The pair in \eqref{eq:full-domain-dual-pair} is nonendpoint
  admissible.  Therefore
  \eqref{eq:retarded-Strichartz} and
  \eqref{eq:mixed-retarded-Strichartz}, with all norms below
  taken on $[\tau,\infty)$, give
  \begin{equation}
    \label{eq:full-domain-retarded-bound}
    \left\|
      \int_t^\infty
      e^{-i(t-s)H_\alpha}
      \bigl(R_2(s)+R_p(s)\bigr)\,ds
    \right\|_{L^\infty L^2\cap L^4L^4}
    \leq
    C_{\alpha,p}
    \left(
      \|R_2\|_{L^1L^2}
      +
      \|R_p\|_{L^{q_p'}L^p}
    \right).
  \end{equation}
  Set
  \begin{equation}
    \label{eq:full-domain-weighted-error}
    E_{p,T}
    =
    \sup_{\tau\geq T}
    \tau^b
    \left(
      \|R_2\|_{L^1([\tau,\infty);L^2)}
      +
      \|R_p\|_{L^{q_p'}([\tau,\infty);L^p)}
    \right).
  \end{equation}
  By \eqref{eq:full-domain-weighted-error} and
  \eqref{eq:full-domain-residual-tail},
  \begin{equation}
    \label{eq:full-domain-weighted-error-bound}
    E_{p,T}
    \leq
    C_{\phi,p}
    T^{b-\delta_p}(1+\log T)^2
    \longrightarrow
    0.
  \end{equation}

  The nonlinear estimates
  \eqref{eq:linear-long-range-term} and
  \eqref{eq:quadratic-remainder-term} are unchanged.
  Estimates \eqref{eq:full-domain-retarded-bound}--%
  \eqref{eq:full-domain-weighted-error-bound} give
  the fixed point bounds
  \eqref{eq:fixed-point-bound} and
  \eqref{eq:fixed-point-difference}, with $E_T$ replaced by
  $C_{\alpha,p}E_{p,T}$.  Smallness of $\|\phi\|_\infty$
  absorbs the linear long range term.  Its coefficient is the
  same as in \eqref{eq:fixed-point-difference} and is independent
  of $p$, so the smallness threshold is uniform in $p$.
  Since $b>1/2$ and $b<\delta_p$, a sufficiently
  large $T$ controls the quadratic term and the residual.  The
  contraction gives the unique solution in
  \eqref{eq:all-channel-asymptotic}.

  The rest of the proof is similar; in particular,
  when proving uniqueness in the final state class,
  integrate from $t$ to $S$ and let $S\to \infty$:
  then the nonlinear source converges in
  $L^1L^2$, and the two residual components converge in
  $L^1L^2$ and $L^{q_p'}L^p$, respectively.  Estimate
  \eqref{eq:full-domain-retarded-bound} passes to the limit,
  and the same contraction proves uniqueness.
\end{proof}

The preceding theorems specialize to each angular channel.
In this setting the linear flow and the gauge invariant nonlinearity
preserve the channel, and the trace order \(\mu\) directly determines
the admissible decay range. We give the resulting explicit radial
statement.

\begin{corollary}[Single channel final states]
\label{cor:natural-channel-final-states}
  Let $1/2<b<1$, let $m\in\{0,-1\}$, and
  \begin{equation}
    \label{eq:natural-channel-order}
    0 < \mu
    =
    |m+\alpha| < 1.
  \end{equation}
  Suppose $f\in\dom(L_\mu)\cap L^\infty$ has the decomposition
  \eqref{eq:channel-trace-decomposition}, and assume
  \begin{equation}
    \label{eq:natural-channel-trace-condition}
    c=0
    \quad\text{or}\quad
    b<\frac12+\mu.
  \end{equation}
  If $\|f\|_\infty$ is sufficiently small, then
  the modified final state construction applies to
  \begin{equation}
    \label{eq:natural-channel-profile}
    \phi(r,\theta)
    =
    e^{im\theta}f(r).
  \end{equation}
  The resulting solution remains in this angular channel and
  has the approximate final state
  \begin{equation}
    \label{eq:natural-channel-approximate-state}
    u_{\mathrm{ap}}(t,r,\theta)
    =
    \frac{e^{im\theta}}{2it}
    e^{ir^2/(4t)}
    f\left(\frac r{2t}\right)
    \exp\left[
      -\frac{i\lambda}{2}
      \left|
        f\left(\frac r{2t}\right)
      \right|
      \log t
    \right].
  \end{equation}
\end{corollary}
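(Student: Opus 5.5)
The plan is to embed the channel profile into the full domain and invoke the two final state theorems, choosing the branch according to \eqref{eq:natural-channel-trace-condition}. Channel preservation then follows from uniqueness.

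First, membership. Since $f\in\dom(L_\mu)$, \eqref{eq:channel-linear} gives $\phi=e^{im\theta}f\in\dom(H_\alpha)$ with $H_\alpha\phi=e^{im\theta}L_\mu f$. By \eqref{eq:channel-trace-decomposition}, the only Friedrichs trace of $\phi$ is $c_m(\phi)=c$, and the other trace vanishes. Moreover $\|\phi\|_\infty=\|f\|_\infty$.

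Next, the case split.
\begin{itemize}
\item If $c=0$, then $\phi\in\mathcal Y\subset\mathcal D_\alpha^\sharp$. Corollary~\ref{cor:sharp-full-admissibility} and Theorem~\ref{the:all-channel} give the construction for every $1/2<b<1$.
\item If $c\neq0$ and $\mu>1/2$, then $\phi\in\mathcal D_\alpha^\sharp$ by \eqref{eq:full-trace-candidate}, and the same argument applies.
\item If $c\neq0$ and $\mu\le1/2$, then $b<1/2+\mu$. Because $m\in\{0,-1\}$ and $\mu\le 1/2$, we have $\mu=\nu_\alpha$ (at $\alpha=1/2$ both orders equal $1/2$). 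Choose $p$ with $1<p<(1-\nu_\alpha)^{-1}$ close enough to the endpoint that $b<\delta_p=3/2-1/p$. This is possible since $\delta_p\uparrow 1/2+\nu_\alpha$. Theorem~\ref{the:full-domain-final-states} then applies, with a smallness threshold independent of $p$.
\end{itemize}

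Then, the explicit form. Formula \eqref{eq:natural-channel-approximate-state} is just \eqref{eq:all-channel-uap} written in polar coordinates. The dilation $x\mapsto x/(2t)$ preserves $\theta$, and $|\phi|=|f|$.

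Finally, channel invariance. Since $H_\alpha$ commutes with rotations and $N(u)=|u|u$ is gauge invariant, the map $u\mapsto e^{-im\theta_0}u(R_{\theta_0}\cdot)$ sends solutions to solutions. Here $R_{\theta_0}$ denotes rotation by $\theta_0$. The same map fixes $u_{\mathrm{ap}}$, because the profile satisfies $\phi(R_{\theta_0}x)=e^{im\theta_0}\phi(x)$ and the modifier is gauge equivariant.

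The map also preserves the $X_{b,T}$ norms. By uniqueness in the class \eqref{eq:all-channel-asymptotic}, the solution is therefore invariant under it for every $\theta_0$. This means $u(t,r,\theta)=e^{im\theta}v(t,r)$, so the solution stays in the channel.

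The main point needing care is the third case: identifying $\mu$ with $\nu_\alpha$ and confirming that $p$ can be chosen admissible while keeping $b<\delta_p$. The rest is bookkeeping.
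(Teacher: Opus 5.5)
Your proof is correct. The case split and the two theorems you invoke are the paper's: Theorem~\ref{the:all-channel} through admissibility when $c=0$ or $\mu>1/2$, and Theorem~\ref{the:full-domain-final-states} with $\mu=\nu_\alpha$ otherwise. Your explicit choice of $p$ is the one made in the proof of Theorem~\ref{the:main-final-state}. There are two small but genuine differences in route.

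\textbf{Admissibility.} The paper stays inside the channel and uses Theorem~\ref{the:sharp-channel-domain} together with \eqref{eq:channel-modifier-graph-growth}. You lift to the plane, check $\phi\in\mathcal D_\alpha^\sharp$, and cite Corollary~\ref{cor:sharp-full-admissibility}. Your route needs the easy identification $e^{im\theta}\mathcal D^{\mathrm{ch}}_{\mu,0}\subset\mathcal Y$. This holds because the $L_\mu$ graph norm on $\mathcal D^{\mathrm{ch}}_{\mu,0}$ is equivalent to the $H^2_0(\mathbb R_+)$ norm, where $C_c^\infty(0,\infty)$ is dense. Together with uniqueness in \eqref{eq:full-domain-decomposition}, it shows that $c$ is the only trace of $\phi$. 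In return, your route gets the continuity in $\gamma$ required by Definition~\ref{def:admissible-profile} explicitly from Theorem~\ref{the:sharp-full-domain}. The one-channel theorem states only the growth bound in $\gamma$.

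\textbf{Channel invariance.} The paper observes that every ingredient of the fixed point map \eqref{eq:fixed-point-map} preserves $e^{im\theta}L^2(r\,dr)$: the linear flow, $u_{\mathrm{ap}}$ (hence the residual), and the nonlinearity by \eqref{eq:channel-nonlinear}. So the Picard iteration runs in the channel and directly solves the radial problem \eqref{eq:radial-NLS}. Extending this to times $t<T$ implicitly uses the same property for the $L^2$ Cauchy theory.

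You instead use the symmetry $u\mapsto e^{-im\theta_0}u(R_{\theta_0}\cdot)$. It commutes with the equation, fixes $u_{\mathrm{ap}}$ by gauge equivariance of $G_\gamma$, and preserves the $X_{b,T}$ norm. Uniqueness in the final state class plus global $L^2$ uniqueness then gives invariance for all real times at once, and the Fourier coefficients conclude. This needs nothing beyond the uniqueness already proved and avoids inspecting the Duhamel map term by term. The paper's argument, by contrast, is constructive.
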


\begin{proof}
  If $c=0$ or $\mu>1/2$,
  Theorem~\ref{the:sharp-channel-domain} and
  \eqref{eq:channel-modifier-graph-growth} show that
  \eqref{eq:natural-channel-profile} is admissible, so
  Theorem~\ref{the:all-channel} applies.  In the remaining case,
  $c\neq0$ and $\mu\leq1/2$.  Then
  $\mu=\nu_\alpha$, and
  Theorem~\ref{the:full-domain-final-states} applies by
  \eqref{eq:natural-channel-trace-condition}.
  The linear flow preserves the channel by
  \eqref{eq:channel-linear}.  The identity
  $|e^{im\theta}f|e^{im\theta}f=e^{im\theta}|f|f$
  treats the nonlinearity.
\end{proof}


\subsection{The modified nonlinear wave operator and the main
theorem}
\label{sec:modified-wave-operator}

Recall $u_+$ and $\mathcal N_\alpha(t)$ from
\eqref{eq:u-plus-definition} and
\eqref{eq:distorted-nonlinear-modifier}.

\begin{proposition}[Form of the asymptotic state]
\label{prop:modified-wave-form}
  Let $\phi\in\dom(H_\alpha)$, and define $W$ by
  \eqref{eq:W-definition}.  Then
  \begin{equation}
    \label{eq:full-domain-linear-form-error}
    \left\|
      M(t)D(t)W(t)
      -
      e^{-itH_\alpha}\mathcal F_\alpha^{-1}W(t)
    \right\|_2
    \leq
    C_\phi t^{-1/2}(1+\log t).
  \end{equation}
  If $\phi$ is admissible, the stronger estimate
  \begin{equation}
    \label{eq:linear-form-error}
    \left\|
      M(t)D(t)W(t)
      -
      e^{-itH_\alpha}\mathcal F_\alpha^{-1}W(t)
    \right\|_2
    \leq
    \frac{C_\phi}{4t}(1+\log t)^2
  \end{equation}
  holds.  Hence every solution given by
  Theorem~\ref{the:all-channel} or
  Theorem~\ref{the:full-domain-final-states} satisfies
  \begin{equation}
    \label{eq:modified-wave-asymptotic}
    \left\|
      u(t)
      -
      e^{-itH_\alpha}
      \mathcal N_\alpha(t)u_+
    \right\|_2
    \longrightarrow0
    \qquad
    (t\to\infty).
  \end{equation}
\end{proposition}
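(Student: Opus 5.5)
The key is the factorization in Proposition~\ref{prop:full-factorization}. Since
$e^{-itH_\alpha}\mathcal F_\alpha^{-1}W=M(t)D(t)\mathcal F_\alpha M(t)\mathcal F_\alpha^{-1}W$
and $M(t)D(t)$ is unitary, the quantity to estimate becomes
\begin{equation*}
  \bigl\|W(t)-\mathcal F_\alpha M(t)\mathcal F_\alpha^{-1}W(t)\bigr\|_2
  =
  \bigl\|(M(t)-1)\mathcal F_\alpha^{-1}W(t)\bigr\|_2 .
\end{equation*}
Here we used that $\mathcal F_\alpha$ is unitary. Set $v(t)=\mathcal F_\alpha^{-1}W(t)$. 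Then the task is to control $\|(e^{i|x|^2/(4t)}-1)v\|_2$. We use the pointwise bounds $|e^{is}-1|\le\min\{2,|s|\}\le 2^{1-\theta}|s|^\theta$ for $\theta\in[0,1]$. These give
\begin{equation*}
  \|(M(t)-1)v\|_2\le C\,t^{-\theta}\,\||x|^{2\theta}v\|_2 .
\end{equation*}
The dual intertwining \eqref{eq:distorted-dual-intertwining} now converts weights into operators on the profile. Indeed, $\mathcal F_\alpha|x|^2=H_\alpha^{(\rho,\theta)}\mathcal F_\alpha$, so $\||x|^2v\|_2=\|H_\alpha W(t)\|_2$. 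By functional calculus for the nonnegative operator $|x|^2$, conjugated by $\mathcal F_\alpha$, we also get $\||x|v\|_2=\|H_\alpha^{1/2}W(t)\|_2$.

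For admissible $\phi$ take $\theta=1$. Then $\||x|^2v\|_2=\|H_\alpha W(t)\|_2\le C_\phi(1+\log t)^2$ by \eqref{eq:profile-admissibility}. Together with the factor $|s|\le|x|^2/(4t)$, this yields exactly \eqref{eq:linear-form-error}, including the constant $1/4$.

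For general $\phi\in\dom(H_\alpha)$ take $\theta=1/2$ and invoke the form bound \eqref{eq:full-domain-form-growth} of Lemma~\ref{lem:full-domain-weak-image}, namely $\|H_\alpha^{1/2}G_\gamma(\phi)\|_2\le C_\phi(1+|\gamma|)$ with $\gamma=\lambda\log t/2$. This gives \eqref{eq:full-domain-linear-form-error}. The one point I would check carefully is the identification $\||x|\mathcal F_\alpha^{-1}W\|_2=\|H_\alpha^{1/2}W\|_2$ for $W$ only in the form domain $\mathcal Q_\alpha$. My plan is to deduce it from the unitary equivalence $\mathcal F_\alpha^{-1}|x|^2\mathcal F_\alpha=H_\alpha$ (the adjoint version of \eqref{eq:distorted-dual-intertwining}, which holds as an identity of selfadjoint operators because $\mathcal F_\alpha^{-1}=\mathcal F_\alpha^*$ and the Hankel transforms are selfadjoint). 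Then the form domains correspond and the square roots match. Since Lemma~\ref{lem:full-domain-weak-image} places $W(t)$ in the Friedrichs form domain, the estimate is justified.

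For the final claim, write $u-e^{-itH_\alpha}\mathcal N_\alpha(t)u_+=(u-u_{\mathrm{ap}})+(u_{\mathrm{ap}}-e^{-itH_\alpha}\mathcal F_\alpha^{-1}W(t))$. This uses $\mathcal N_\alpha(t)u_+=\mathcal F_\alpha^{-1}W(t)$, which holds by definition because $\mathcal F_\alpha u_+=\phi$. The first difference is $O(t^{-b})$ in $L^2$ by \eqref{eq:all-channel-asymptotic}, from Theorem~\ref{the:all-channel} or Theorem~\ref{the:full-domain-final-states}. The second is $O(t^{-1/2}\log t)$, or $O(t^{-1}\log^2t)$ in the admissible case, by the estimates just proved. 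Both tend to zero. I expect no serious obstacle beyond the form-domain identification above.
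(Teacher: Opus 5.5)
Your proposal is correct and follows the paper's proof essentially step by step. The factorization \eqref{eq:full-factorization} and unitarity reduce both estimates to $\|(M(t)-1)\mathcal F_\alpha^{-1}W(t)\|_2$, which is then controlled by $|e^{is}-1|\lesssim|s|^{1/2}$ or $|s|$, by $\||x|\mathcal F_\alpha^{-1}W\|_2=\|H_\alpha^{1/2}W\|_2$ and $\||x|^2\mathcal F_\alpha^{-1}W\|_2=\|H_\alpha W\|_2$, and by the bounds \eqref{eq:full-domain-form-growth} and \eqref{eq:profile-admissibility}.

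One minor precision on the point you flagged: the identity you need is $\mathcal F_\alpha|x|^2\mathcal F_\alpha^{-1}=H_\alpha$, which is \eqref{eq:distorted-dual-intertwining} itself. The identity $\mathcal F_\alpha^{-1}|x|^2\mathcal F_\alpha=H_\alpha$ that you wrote is the forward intertwining \eqref{eq:distorted-intertwining}. The two give the same norms here only because $\mathcal F_\alpha^{-1}$ differs from $\mathcal F_\alpha$ by channelwise unimodular constants, which commute with $|x|$.
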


\begin{proof}
  By \eqref{eq:full-factorization},
  \begin{equation}
    \label{eq:factorization-difference}
    e^{-itH_\alpha}
    \mathcal F_\alpha^{-1}W(t)
    -
    M(t)D(t)W(t)
    =
    M(t)D(t)
    \mathcal F_\alpha
    \bigl(M(t)-1\bigr)
    \mathcal F_\alpha^{-1}W(t).
  \end{equation}
  All factors apart from $M(t)-1$ are unitary.  Since
  \begin{equation}
    \label{eq:M-minus-one-form}
    |e^{i|x|^2/(4t)}-1|
    \leq
    C t^{-1/2}|x|,
  \end{equation}
  functional calculus in the dual intertwining identity
  \eqref{eq:distorted-dual-intertwining} gives
  \begin{equation}
    \label{eq:dual-form-intertwining}
    \left\|
      |x|\mathcal F_\alpha^{-1}W(t)
    \right\|_2
    =
    \|H_\alpha^{1/2}W(t)\|_2.
  \end{equation}
  Lemma~\ref{lem:full-domain-weak-image} and
  \eqref{eq:M-minus-one-form} prove
  \eqref{eq:full-domain-linear-form-error}.

  If $\phi$ is admissible, we also use
  \begin{equation}
    \label{eq:M-minus-one}
    |e^{i|x|^2/(4t)}-1|
    \leq
    \frac{|x|^2}{4t},
  \end{equation}
  the dual intertwining identity
  \eqref{eq:distorted-dual-intertwining} gives
  \begin{equation}
    \label{eq:factorization-error-bound}
      \left\|
        \bigl(M(t)-1\bigr)
        \mathcal F_\alpha^{-1}W(t)
      \right\|_2
      \leq
      \frac1{4t}
      \left\|
        |x|^2\mathcal F_\alpha^{-1}W(t)
      \right\|_2
      =
      \frac1{4t}\|H_\alpha W(t)\|_2.
  \end{equation}
  Admissibility proves \eqref{eq:linear-form-error}.
  Combining either estimate with
  \eqref{eq:all-channel-asymptotic} gives
  \eqref{eq:modified-wave-asymptotic}.
\end{proof}

\begin{proof}[Proof of Theorem~\ref{the:main-final-state}]
  Suppose first that $b<1/2+\nu_\alpha$.  Choose and fix
  $p$ such that
  $1/(3/2-b)<p<(1-\nu_\alpha)^{-1}$.  This interval is
  nonempty exactly because $b<1/2+\nu_\alpha$.
  Theorem~\ref{the:full-domain-final-states} applies to every
  $\phi\in\mathcal P_{\alpha,b}=\dom(H_\alpha)$ and gives
  \eqref{eq:main-final-state-bound} with a smallness constant
  depending only on $b$, $\alpha$, and $\lambda$.
  Take $\varepsilon_{b,\alpha,\lambda}$ in the present theorem
  to be this constant.
  If instead $1/2+\nu_\alpha\leq b<1$, then
  $\mathcal P_{\alpha,b}=\mathcal D_\alpha^\sharp$.
  Corollary~\ref{cor:sharp-full-admissibility} and
  Theorem~\ref{the:all-channel} give the unique global solution
  and \eqref{eq:main-final-state-bound}.
  In this range, take $\varepsilon_{b,\alpha,\lambda}$ to be
  the threshold supplied by Theorem~\ref{the:all-channel}.

  In either branch,
  Proposition~\ref{prop:modified-wave-form} gives
  \eqref{eq:main-modified-wave-asymptotic}.  Hence the value
  $u(0)$ depends uniquely on $u_+$ in
  \eqref{eq:wave-operator-domain}, which proves that
  \eqref{eq:wave-operator-map} is well defined.

  To prove injectivity, suppose that two final profiles
  $\phi_1$ and $\phi_2$ produce the same value at time zero.
  Global $L^2$ uniqueness gives the same solution for all
  times, and \eqref{eq:main-final-state-bound} implies
  \begin{equation*}
    \||\phi_{1}|-|\phi_{2}|\|_{2}\leq
    \left\|
      G_{\lambda\log(t)/2}(\phi_1)
      -
      G_{\lambda\log(t)/2}(\phi_2)
    \right\|_2
    \longrightarrow0
  \end{equation*}
  so that
  $|\phi_1|=|\phi_2|$ almost everywhere.  The two modifiers
  then contain the same scalar phase, so their distance equals
  $\|\phi_1-\phi_2\|_2$ for every $t$.  Hence
  $\phi_1=\phi_2$, and the unitarity of $\mathcal F_\alpha$
  gives equality of the final data.
\end{proof}

\subsection{Sharpness of the trace dependent rate}
\label{sec:pure-trace-rate-sharpness}

We now prove
Theorem~\ref{the:pure-trace-rate-sharpness}.  The argument uses
only the exceptional channel and a high frequency Hankel
projection.

Fix $0<\nu<1/2$ and a cutoff $\chi$ as in
\eqref{eq:pure-trace-profile}.  Put
\begin{equation}
  \label{eq:rate-sharpness-model-power}
  g_\nu(r)
  =
  \chi(r)r^{2\nu}.
\end{equation}
The following cutoff Hankel asymptotic is a special case of the
expansions in Wong~\cite{Won76,Won77}; for completeness, we give
an independent proof of the coefficient and the remainder needed
here.

\begin{lemma}[Hankel threshold]
\label{lem:rate-Hankel-threshold}
  As $\rho\to\infty$,
  \begin{equation}
    \label{eq:rate-Hankel-tail}
    (\mathcal H_\nu g_\nu)(\rho)
    =
    \kappa_\nu\rho^{-2\nu-2}
    +
    o(\rho^{-2\nu-2}),
  \end{equation}
  where
  \begin{equation}
    \label{eq:rate-Hankel-coefficient}
    \kappa_\nu
    =
    2^{2\nu+1}
    \frac{\Gamma((3\nu+2)/2)}{\Gamma(-\nu/2)}
    \neq0.
  \end{equation}
\end{lemma}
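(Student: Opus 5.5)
The plan is to reduce the cutoff power $g_\nu=\chi r^{2\nu}$ to an exactly computable Gaussian model plus a remainder that vanishes to high order at the pole, and to show that the remainder's Hankel transform decays faster than $\rho^{-2\nu-2}$ by using the diagonalization \eqref{eq:Hankel-diagonalization}. Concretely, write
\begin{equation*}
  g_\nu
  =
  r^{2\nu}e^{-r^2}
  +
  \beta_1 r^{2\nu+2}e^{-r^2}
  +
  \beta_2 r^{2\nu+4}e^{-r^2}
  +
  h,
\end{equation*}
where $\beta_1,\beta_2$ are chosen by matching Taylor coefficients of $\chi(r)-e^{-r^2}(1+\beta_1r^2+\beta_2r^4)$ at $r=0$. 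Since $\chi\equiv1$ near zero, this gives $h=O(r^{2\nu+6})$ near $0$, and $h$ is smooth on $(0,\infty)$ with Gaussian decay.

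\textbf{Gaussian pieces.} For the pieces $r^{2\nu+2k}e^{-r^2}$, use the classical Weber--Sonine integral
\begin{equation*}
  \int_0^\infty J_\nu(r\rho)\,r^{\nu+1+2\beta}e^{-r^2}\,dr
  =
  c\,\rho^\nu\,{}_1F_1\!\left(\nu+1+\beta;\nu+1;-\tfrac{\rho^2}{4}\right),
  \qquad \beta=\nu/2+k,
\end{equation*}
with explicit $c$. Kummer's large-argument asymptotics give, for $k=0$, the leading term $2^{2\nu+1}\frac{\Gamma((3\nu+2)/2)}{\Gamma(-\nu/2)}\rho^{-2\nu-2}(1+O(\rho^{-2}))$, since the exponentially small branch is negligible. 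The coefficient should be double checked against the scale-invariant formal Weber integral $\int_0^\infty J_\nu(s)s^{2\nu+1}\,ds$. The same computation gives $O(\rho^{-2\nu-2-2k})$ for $k=1,2$. Note that $\kappa_\nu\neq0$ because $-\nu/2\notin -\mathbb N$ and $(3\nu+2)/2>0$.

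\textbf{Remainder.} This is the main obstacle: a crude bound $|J_\nu(x)|\lesssim x^{-1/2}$ only gives $\mathcal H_\nu h=O(\rho^{-1/2})$. The plan is to use $\mathcal H_\nu L_\mu$-intertwining twice,
\begin{equation*}
  \mathcal H_\nu h=\rho^{-4}\,\mathcal H_\nu(L_\nu^2h).
\end{equation*}
This is legitimate because $h$ and $L_\nu h$ lie in $\dom(L_\nu)$: both vanish to order $r^{2\nu+2}$ or better, have no $r^{\pm\nu}$ component, and therefore produce no boundary terms at $r=0$. Near zero, $L_\nu^2h=O(r^{2\nu+2})$, and it decays rapidly at infinity. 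Hence $|\mathcal H_\nu(L_\nu^2h)(\rho)|\lesssim\rho^{-1/2}\int|L_\nu^2h|\,r^{1/2}\,dr$, and this gives
\begin{equation*}
  \mathcal H_\nu h=O(\rho^{-9/2})=o(\rho^{-2\nu-2}),
\end{equation*}
since $2\nu+2<3$.

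\textbf{Conclusion.} Summing the three Gaussian contributions and the remainder yields \eqref{eq:rate-Hankel-tail}. The only delicate points are the justification of the intertwining identity with no boundary terms, which follows by checking the Wronskian-type boundary expression $[r(f\,\partial_rJ_\nu(r\rho)-J_\nu(r\rho)\,\partial_rf)]_{r\to0}=0$ for $f=O(r^{2\nu+2})$, and the bookkeeping of the Kummer coefficient.
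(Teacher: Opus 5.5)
Your proposal is correct and follows essentially the paper's route: a Gaussian model $r^{2\nu}e^{-r^2}$ treated by the Gaussian--Bessel integral and Kummer asymptotics (your bookkeeping reproduces $\kappa_\nu$), plus a remainder controlled by integrating by parts twice against $L_\nu$. The only difference is that the paper subtracts just the leading Gaussian. Then $h=O(r^{2\nu+2})$ and $L_\nu^2h=O(r^{2\nu-2})$, which is not in $L^2$ but does lie in $L^1((0,\infty),r\,dr)$ because $\nu>0$; the boundary terms still vanish, and boundedness of $J_\nu$ gives $O(\rho^{-4})$. So your two extra Gaussian corrections, introduced to keep $h$ and $L_\nu h$ in $\dom(L_\nu)$, are harmless but unnecessary.
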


\begin{proof}
  We separate the singular power from the cutoff by introducing
  the Gaussian model
  \begin{equation*}
    g_0(r)=r^{2\nu}e^{-r^2},
    \qquad
    h(r)=g_\nu(r)-g_0(r).
  \end{equation*}
  The Gaussian--Bessel integral gives
  \begin{equation*}
    (\mathcal H_\nu g_0)(\rho)
    =
    \frac{\Gamma(A)}{2^{\nu+1}\Gamma(B)}\,
    \rho^\nu
    {}_1F_1\!\left(A;B;-\frac{\rho^2}{4}\right),
    \qquad
    A=\frac{3\nu+2}{2},
    \qquad
    B=\nu+1.
  \end{equation*}
  The positive-axis asymptotic expansion of the Kummer function
  (see~\cite[\S13.7(i)]{DLMF}) is
  \begin{equation*}
    {}_1F_1(A;B;-x)
    =
    \frac{\Gamma(B)}{\Gamma(B-A)}x^{-A}
    \bigl(1+O(x^{-1})\bigr)
    +
    O\bigl(e^{-x}x^{A-B}\bigr)
    \qquad (x\to+\infty).
  \end{equation*}
  Since $B-A=-\nu/2$, this yields
  \begin{equation*}
    (\mathcal H_\nu g_0)(\rho)
    =
    2^{2\nu+1}
    \frac{\Gamma((3\nu+2)/2)}{\Gamma(-\nu/2)}
    \rho^{-2\nu-2}
    +
    O(\rho^{-2\nu-4})
    +
    O\bigl(\rho^{2\nu}e^{-\rho^2/4}\bigr).
  \end{equation*}

  It remains to show that the replacement of the Gaussian by
  $\chi$ contributes only a lower-order term.  Since $\chi=1$
  near zero,
  \begin{equation*}
    h(r)=O(r^{2\nu+2}),
    \qquad
    L_\nu h(r)=O(r^{2\nu}),
    \qquad
    L_\nu^2h(r)=O(r^{2\nu-2})
    \quad (r\downarrow0).
  \end{equation*}
  At infinity, $h$ and all its derivatives decay rapidly.  Because
  $\nu>0$, $L_\nu^2h$ belongs to $L^1((0,\infty),r\,dr)$.
  Integrating by parts twice with the Bessel operator gives
  \begin{equation*}
    (\mathcal H_\nu h)(\rho)
    =
    \rho^{-4}
    \int_0^\infty
      J_\nu(\rho r)(L_\nu^2h)(r)r\,dr.
  \end{equation*}
  The boundary terms vanish: at the origin this follows from
  $J_\nu(\rho r)=O(r^\nu)$ and
  $J_\nu'(\rho r)=O(r^{\nu-1})$, together with the three
  displayed bounds, and at infinity from rapid decay.  Since
  $J_\nu$ is bounded,
  \begin{equation*}
    |(\mathcal H_\nu h)(\rho)|
    \leq
    C_\nu\rho^{-4}
    \|L_\nu^2h\|_{L^1(r\,dr)}.
  \end{equation*}
  As $0<\nu<1/2$, this is
  $o(\rho^{-2\nu-2})$.  Combining the Gaussian asymptotic with
  this remainder proves \eqref{eq:rate-Hankel-tail}, and the
  coefficient is exactly \eqref{eq:rate-Hankel-coefficient}.
\end{proof}


The following identity is at the root of the extra
logarithm:
\begin{equation}
  \label{eq:rate-logarithmic-Duhamel-symbol}
    \rho^2
    \int_0^s
    e^{i(s-\sigma)\rho^2}
    \log\frac1\sigma
    \,d\sigma
    =
    \frac{e^{is\rho^2}-1}{i}
    \log\frac1s
    +
    s\rho^2
    \int_0^1
    e^{i(1-\tau)s\rho^2}
    \log\frac1\tau
    \,d\tau
\end{equation}
(to prove it, just set $\sigma=s \tau$ in the first integral).

The Taylor remainder in the nonlinear phase starts at order
$r^{3\nu}$.  The following elementary consequence of the cutoff
Hankel expansion is the only estimate on that remainder needed
below.

\begin{lemma}[Hankel bound for the nonlinear remainder]
\label{lem:rate-remainder-hankel-bound}
  Fix $a\in\mathbb R$.  Let $f=c\chi r^\nu$ and define
  \begin{equation}
    \label{eq:rate-remainder-profile}
    Q_\ell
    =
    f\left(e^{-ia\ell|f|}-1+ia\ell|f|\right),
    \qquad \ell\in\mathbb R.
  \end{equation}
  There are constants $C$ and $M$, independent of $\ell$, such
  that
  \begin{equation}
    \label{eq:rate-remainder-hankel-bound}
    \left|(\mathcal H_\nu Q_\ell)(\rho)\right|
    \leq
    C(1+|\ell|)^M\rho^{-3\nu-2},
    \qquad \rho\geq1.
  \end{equation}
\end{lemma}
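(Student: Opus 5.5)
Write $F_\ell(s)=e^{-ia\ell s}-1+ia\ell s$ for $s\ge0$. Then $Q_\ell=f\,F_\ell(|f|)$. Near the pole $\chi=1$, and $|f|=|c|r^\nu$, so $Q_\ell(r)=c r^\nu F_\ell(|c|r^\nu)$ is an explicit function of $r^\nu$ only. The plan mirrors the proof of Lemma~\ref{lem:rate-Hankel-threshold}: peel off finitely many pure powers $r^{k\nu}$, $k\ge3$, whose cutoff Hankel transforms decay at least like $\rho^{-k\nu-2}\le\rho^{-3\nu-2}$. The leftover is smooth enough that two Bessel integrations by parts give $\rho^{-4}$. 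Throughout, $|\ell|$ enters only polynomially through Taylor coefficients and derivative bounds.

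\textbf{Step 1: expansion.} Choose $N$ with $(N+1)\nu>2$. Taylor expanding $F_\ell$ at $0$ (it is entire in $s$, with $F_\ell(0)=F_\ell'(0)=0$) gives, near $0$,
\begin{equation*}
Q_\ell=\sum_{k=3}^{N} b_k(\ell)\,\chi r^{k\nu}+\chi E_\ell,
\end{equation*}
where $|b_k(\ell)|\le C(1+|\ell|)^{k-1}$. The remainder satisfies $E_\ell=O((1+|\ell|)^{N}r^{(N+1)\nu})$. Each derivative in $r$ costs one power of $r^{-1}$, again with polynomial dependence on $\ell$, since $F_\ell^{(j)}(s)$ is bounded by $C(1+|\ell|)^j$ on bounded $s$-sets. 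Away from the pole, $Q_\ell$ is smooth and compactly supported, with $C^4$ norms $\lesssim(1+|\ell|)^M$. Possible nonsmoothness of $\chi r^\nu$ where $\chi$ transitions is irrelevant: $r^\nu$ is smooth for $r>0$.

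\textbf{Step 2: pure powers.} For the terms $\chi r^{k\nu}$, I would repeat the argument of Lemma~\ref{lem:rate-Hankel-threshold} with $2\nu$ replaced by $k\nu$. Compare with $r^{k\nu}e^{-r^2}$, whose Hankel transform is a Kummer function with asymptotics $O(\rho^{-k\nu-2})$, plus possibly the $\rho^{-\nu}$-type term if $B-A$ is a nonpositive integer. In fact the leading coefficient then vanishes and the decay is even faster; either way the bound is $O(\rho^{-k\nu-2})$. The difference from the cutoff power is handled by $\rho^{-4}\|L_\nu^2h\|_{L^1(r\,dr)}$. This requires $L_\nu^2 h=O(r^{k\nu-2})\in L^1(r\,dr)$, which holds. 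Since $k\ge3$ and $\rho\ge1$, all these contributions are $\le C\rho^{-3\nu-2}$. A cleaner alternative avoids the Gaussian: use the classical formula for $\int_0^\infty J_\nu(\rho r)r^{\beta}e^{-\epsilon r^2}$ directly. I would keep the Gaussian route for consistency with the paper.

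\textbf{Step 3: remainder.} For $\chi E_\ell$ (together with the smooth part away from the pole), integrate by parts twice as in Lemma~\ref{lem:rate-Hankel-threshold}:
\begin{equation*}
\mathcal H_\nu(\chi E_\ell)(\rho)=\rho^{-4}\int_0^\infty J_\nu(\rho r)\,L_\nu^2(\chi E_\ell)(r)\,r\,dr.
\end{equation*}
The choice $(N+1)\nu>2$ is designed for this step. It makes $L_\nu^2(\chi E_\ell)=O(r^{(N+1)\nu-4})$, which lies in $L^1(r\,dr)$. It also makes the boundary terms at $0$ vanish: these involve $r^\nu\cdot r^{(N+1)\nu-3}\cdot r$ and similar products, all tending to $0$. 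This gives $|\mathcal H_\nu(\chi E_\ell)|\le C(1+|\ell|)^M\rho^{-4}\le C(1+|\ell|)^M\rho^{-3\nu-2}$, because $3\nu+2<4$ for $\nu<1/2$ (even $3\nu<2$ suffices).

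\textbf{Main obstacle.} The main obstacle is bookkeeping rather than ideas. One must verify that the derivative bounds on $E_\ell$ up to order four really carry only polynomial dependence on $\ell$. One must also check the vanishing of the boundary terms in the double integration by parts; the lower-order terms of $L_\nu$ must be tracked there, not only $\partial_r^2$. Everything else reduces to Lemma~\ref{lem:rate-Hankel-threshold}.
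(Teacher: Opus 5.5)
Your proposal is correct and follows the same route as the paper's proof: you Taylor expand $Q_\ell$ at the pole with coefficients polynomial in $\ell$, then treat the result with the Gaussian-comparison and double integration by parts argument of Lemma~\ref{lem:rate-Hankel-threshold}. Your version is actually more careful than the paper's sketch. The paper peels off only the $r^{3\nu}$ term and leaves an $O((1+|\ell|)^3r^{4\nu})$ remainder, which that double integration by parts cannot handle directly, because $L_\nu^2$ of an $r^{4\nu}$ term has size $r^{4\nu-4}\notin L^1((0,1),r\,dr)$ for $\nu<1/2$; your choice of $N$ with $(N+1)\nu>2$ is exactly what makes the remainder step legitimate.
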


\begin{proof}
  Near the origin, Taylor expansion gives
  $Q_\ell(r)=A_\ell r^{3\nu}+O((1+|\ell|)^3r^{4\nu})$,
  where $|A_\ell|\lesssim(1+|\ell|)^2$, with the same
  polynomial control for the derivatives needed in the cutoff
  Hankel expansion.  The part supported away from the origin is
  smooth; repeated integration by parts with $L_\nu$ gives the
  same decay there, with constants polynomial in $|\ell|$.
  The cutoff Hankel expansion, as in the proof of
  Lemma~\ref{lem:rate-Hankel-threshold}, therefore gives
  \eqref{eq:rate-remainder-hankel-bound}.  The estimate is uniform
  in $\ell$ after increasing $M$.
\end{proof}

\begin{lemma}[Pure trace response]
\label{lem:rate-pure-trace-response}
  Fix $0<\nu<1/2$ and let $\chi$ be a radial cutoff near 0.
  Let $c\in\mathbb C\setminus\{0\}$
  and $a\in\mathbb R\setminus\{0\}$, and put
  \begin{equation*}
    f(r)
    =
    c\chi(r)r^\nu,
    \qquad
    W_s(r)
    =
    f(r)
    \exp\left(
      -ia|f(r)|\log\frac1{4s}
    \right).
  \end{equation*}
  For $0<s<1/4$, define the retarded response by the $L^2$ limit
  \begin{equation}
    \label{eq:rate-linear-response}
    Y_{\mathrm{lin}}(s)
    =
    i\int_0^s
    e^{i(s-\sigma)L_\nu}
    L_\nu W_\sigma
    \,d\sigma
    :=
    \lim_{\delta\downarrow0}
    i\int_\delta^s
    e^{i(s-\sigma)L_\nu}
    L_\nu W_\sigma
    \,d\sigma
    \quad\hbox{in }L^2
  \end{equation}
  (which exists by retarded Strichartz estimates).
  Then there are $c_0>0$ and $s_0>0$ such that
  \begin{equation}
    \label{eq:rate-linear-response-lower-bound}
    \|Y_{\mathrm{lin}}(s)\|_2
    \geq
    c_0s^{\nu+1/2}\log\frac1s,
    \qquad
    0<s<s_0.
  \end{equation}
\end{lemma}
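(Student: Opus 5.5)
The plan is to pass to the Hankel side, where $L_\nu$ becomes multiplication by $\rho^2$. Then $Y_{\mathrm{lin}}(s)$ has the explicit form
\begin{equation*}
  \widehat Y(s,\rho)
  =
  i\rho^2\int_0^s e^{i(s-\sigma)\rho^2}\widehat W_\sigma(\rho)\,d\sigma,
  \qquad
  \widehat{\cdot}=\mathcal H_\nu .
\end{equation*}
By unitarity of $\mathcal H_\nu$, it suffices to bound $\|\widehat Y(s)\|_{L^2(\rho\,d\rho)}$ from below on the high frequency shell $\rho\sim s^{-1/2}$.

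Set $\ell_\sigma=\log\frac1{4\sigma}$ and expand
\begin{equation*}
  W_\sigma=f-ia\ell_\sigma|c|c\,g_\nu+\ell_\sigma\cdot(\text{cutoff correction})+Q_{\ell_\sigma}.
\end{equation*}
Here $g_\nu=\chi r^{2\nu}$, the cutoff correction $|c|c(\chi^2-\chi)r^{2\nu}$ is smooth and vanishes near $0$, and $Q_\ell$ is as in Lemma~\ref{lem:rate-remainder-hankel-bound}. The terms are then handled as follows.

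\emph{Leading term $f$.} It is $\sigma$-independent and $L_\nu f$ is smooth and compactly supported away from the pole, since $L_\nu r^\nu=0$. Hence $\widehat{L_\nu f}=\rho^2\widehat f$ decays faster than any power. Its contribution is $e^{is\rho^2}\widehat f-\widehat f$ up to sign, with size $O(s)$ in $L^2$ after restricting to the shell, or simply $O(s^N)$ there.

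\emph{Main term $g_\nu$.} This is the core of the argument. Identity \eqref{eq:rate-logarithmic-Duhamel-symbol}, with $\log\frac1{4\sigma}=\log\frac1\sigma-\log4$, gives
\begin{equation*}
  \widehat{Y}_{\mathrm{main}}(s,\rho)
  =
  a|c|c\,\widehat{g_\nu}(\rho)\Bigl[(e^{is\rho^2}-1)\log\tfrac1s+O\bigl(1+s\rho^2\bigr)\Bigr],
\end{equation*}
where the $O(1+s\rho^2)$ is uniform: the inner integral $\int_0^1 e^{i(1-\tau)x}\log\frac1\tau\,d\tau$ is bounded. By Lemma~\ref{lem:rate-Hankel-threshold}, $\widehat{g_\nu}(\rho)=\kappa_\nu\rho^{-2\nu-2}(1+o(1))$. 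On the shell $\rho^2\in[\pi/(2s),3\pi/(2s)]$ the factor $|e^{is\rho^2}-1|$ is at least $1$ and $s\rho^2$ is bounded. This yields
\begin{equation*}
  \|\widehat Y_{\mathrm{main}}(s)\|_{L^2(\text{shell})}^2
  \gtrsim
  \log^2\tfrac1s\int_{\text{shell}}\rho^{-4\nu-4}\rho\,d\rho
  \sim
  s^{2\nu+1}\log^2\tfrac1s,
\end{equation*}
and the $O(1)$ part contributes only $s^{\nu+1/2}$ without the logarithm.

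\emph{Cutoff correction.} Its Hankel transform decays rapidly, so it is negligible, with the $\ell_\sigma$ factor integrable.

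\emph{Remainder $Q_{\ell_\sigma}$.} Lemma~\ref{lem:rate-remainder-hankel-bound} gives $|\rho^2\widehat{Q_{\ell_\sigma}}|\lesssim(1+\ell_\sigma)^M\rho^{-3\nu}$. Bounding the time integral trivially gives
\begin{equation*}
  |\widehat Y_Q|\lesssim s(\log\tfrac1s)^M\rho^{-3\nu}
  \sim s^{1+3\nu/2}(\log\tfrac1s)^M
\end{equation*}
on the shell. Its $L^2$ norm there is $\lesssim s^{1/2+3\nu/2}(\log\frac1s)^M=o(s^{\nu+1/2}\log\frac1s)$, since $\nu>0$.

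Collecting these estimates, the restriction of $\widehat Y(s)$ to the shell has norm at least $c\,s^{\nu+1/2}\log\frac1s$ minus lower order terms, which gives \eqref{eq:rate-linear-response-lower-bound} for small $s$.

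The main obstacle is justifying the expansion and exchanging the limit $\delta\downarrow0$ with the Hankel representation, while making sure the $o(\rho^{-2\nu-2})$ error and the remainder bounds are uniform in $\sigma$. The $\delta\to0$ limit itself is harmless: on the transform side it is pointwise dominated convergence in $\rho$, since $\log\frac1\sigma$ is integrable at $0$. The uniformity question is why I would isolate the $\sigma$-dependence into the explicit scalar factors $\ell_\sigma$, $\ell_\sigma^k$ before transforming.
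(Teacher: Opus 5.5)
Your proposal is correct and is essentially the paper's proof: the same split $W_\sigma=f-ia\ell_\sigma|c|c\,\chi^2r^{2\nu}+Q_{\ell_\sigma}$, localization to the spectral shell $\rho\sim s^{-1/2}$, the logarithmic Duhamel identity \eqref{eq:rate-logarithmic-Duhamel-symbol} combined with Lemma~\ref{lem:rate-Hankel-threshold} for the leading term, and Lemma~\ref{lem:rate-remainder-hankel-bound} with a trivial time integration giving $O\bigl(s^{(3\nu+1)/2}(\log\frac1s)^M\bigr)$ for $Q$. The differences are cosmetic: you split off $(\chi^2-\chi)r^{2\nu}$ so that the threshold lemma applies with the original cutoff, whereas the paper applies it directly with cutoff $\chi^2$; you use the window $s\rho^2\in[\pi/2,3\pi/2]$ instead of $[1,4]$; and the paper applies $\Pi_s$ before writing the Duhamel integral on the Hankel side, which is a slightly cleaner way to handle the fact that $L_\nu W_\sigma\notin L^2$ (the point you flag as the main obstacle).
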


\begin{proof}
  Set
  \begin{equation*}
    \ell_\sigma=\log\frac1{4\sigma}.
  \end{equation*}
  Since $|f|=|c|\chi r^\nu$, we can write
  \begin{equation}
    \label{eq:rate-W-leading-trace}
    W_\sigma
    =
    f
    -ia|c|c
    \chi^2r^{2\nu}
    \log\frac1{4\sigma}
    +
    Q_\sigma
  \end{equation}
  where $Q_\sigma=Q_{\ell_\sigma}$ in the notation of
  Lemma~\ref{lem:rate-remainder-hankel-bound}.
  By linearity, the response splits as
  \begin{equation*}
    Y_{\mathrm{lin}}=Y_f+Y_{\mathrm{lead}}+Y_Q,
  \end{equation*}
  where the three terms are generated by each of the three
  terms on the right-hand side of 
  \eqref{eq:rate-W-leading-trace}, respectively.

  The term $Y_{f}$ is harmless: since $f\in\dom(L_\nu)$,
  \begin{equation*}
    \|Y_f(s)\|_2
    \leq
    \int_0^s\|L_\nu f\|_2\,d\sigma
    =s\|L_\nu f\|_2.
  \end{equation*}
  Consider $Y_{\mathrm{lead}}$.
  Let $\Pi_s$ be the spectral projection of $L_\nu$ onto
  $[s^{-1},4s^{-1}]$, that is
  \begin{equation*}
    \mathcal H_\nu\Pi_s\mathcal H_\nu^{-1} =\mathbf 1_{I_s},
    \qquad
    I_s=[s^{-1/2},2s^{-1/2}].
  \end{equation*}
  Put $g(r)=\chi(r)^2r^{2\nu}$.  After applying $\Pi_s$, the
  Duhamel integral is an ordinary $L^2$ integral.  Its Hankel
  representation is
  \begin{equation*}
    \bigl(\mathcal H_\nu\Pi_sY_{\mathrm{lead}}(s)\bigr)(\rho)
    =
    \mathbf 1_{I_s}(\rho)
    a|c|c\,(\mathcal H_\nu g)(\rho)
    \rho^2\int_0^s
      e^{i(s-\sigma)\rho^2}\ell_\sigma\,d\sigma.
  \end{equation*}
  The logarithmic identity
  \eqref{eq:rate-logarithmic-Duhamel-symbol} gives, uniformly
  on this spectral band,
  \begin{equation*}
    \rho^2\int_0^s
      e^{i(s-\sigma)\rho^2}\ell_\sigma\,d\sigma
    =
    \frac{e^{is\rho^2}-1}{i}\log\frac1s+O(1).
  \end{equation*}
  Indeed, $s\rho^2\in[1,4]$ there; the second term in
  \eqref{eq:rate-logarithmic-Duhamel-symbol} and the constant
  shift from $\log(1/\sigma)$ to $\ell_\sigma$ are uniformly
  bounded.  The Hankel threshold lemma, applied to $g$, gives
  \begin{equation*}
    (\mathcal H_\nu g)(\rho)
    =
    \kappa_\nu\rho^{-2\nu-2}
    +o(\rho^{-2\nu-2}).
  \end{equation*}
  Since $\kappa_\nu\neq0$ and
  $|e^{is\rho^2}-1|$ is bounded below on $s\rho^2\in[1,4]$,
  scaling $y=s^{1/2}\rho$ gives
  \begin{equation}
    \label{eq:rate-projected-leading-response}
    \|\Pi_sY_{\mathrm{lead}}(s)\|_2
    =
    C_{a,c,\nu}
    s^{\nu+1/2}\log\frac1s
    (1+o(1)),
    \qquad
    C_{a,c,\nu}>0.
  \end{equation}

  It remains to show that the remainder $Y_{Q}$ cannot cancel
  this contribution.  Since $Q_\ell=O(r^{3\nu})$ near the origin
  and is cutoff at infinity, $Q_\ell\in L^2$.  Thus the ordinary
  spectral projection $\Pi_sQ_\ell$ is defined and belongs to
  $\dom(L_\nu)$, even though $Q_\ell\not\in\dom(L_\nu)$
  in general.  Spectral calculus gives directly
  \begin{equation*}
    \Pi_sL_\nu\Pi_sQ_{\ell_\sigma}
    =
    L_\nu\Pi_sQ_{\ell_\sigma},
    \qquad
    \bigl(\mathcal H_\nu L_\nu\Pi_sQ_{\ell_\sigma}\bigr)(\rho)
    =
    \mathbf 1_{I_s}(\rho)\,\rho^2
    (\mathcal H_\nu Q_{\ell_\sigma})(\rho),
    \qquad
    I_s=[s^{-1/2},2s^{-1/2}].
  \end{equation*}
  This is the localized form of the distributional quantity
  $L_\nu Q_\ell$; no additional boundary term occurs because
  $Q_\ell=O(r^{3\nu})$, $Q_\ell'=O((1+|\ell|)^2r^{3\nu-1})$,
  while Friedrichs test functions satisfy
  $v=O(r^\nu)$ and $v'=O(r^{\nu-1})$ at the origin.  
  The Hankel bound in Lemma~\ref{lem:rate-remainder-hankel-bound} 
  therefore gives,
  after commuting the projection with the propagator,
  \begin{equation*}
    \Pi_sY_Q(s)
    =
    i\int_0^s e^{i(s-\sigma)L_\nu}
    L_\nu\Pi_sQ_{\ell_\sigma}\,d\sigma,
  \end{equation*}
  and
  \begin{equation*}
    \left\|L_\nu\Pi_sQ_{\ell_\sigma}\right\|_2^2
    =
    \int_{I_{s}}
      \left|\rho^2(\mathcal H_\nu Q_{\ell_\sigma})(\rho)\right|^2
    \rho\,d\rho 
    \leq
    C(1+\ell_\sigma)^{2M}
    \int_{I_{s}}\rho^{1-6\nu}\,d\rho
    \leq
    C(1+\ell_\sigma)^{2M}s^{3\nu-1}.
  \end{equation*}
  Minkowski's inequality and
  $\int_0^s(1+\ell_\sigma)^M\,d\sigma
  \lesssim s(1+\log(1/s))^M$ then give
  \begin{equation}
    \label{eq:rate-projected-remainder-bound}
    \left\|\Pi_sY_Q(s)\right\|_2
    \leq
    C s^{(3\nu+1)/2}
    \left(1+\log\frac1s\right)^M
    =
    o\left(
      s^{\nu+1/2}\log\frac1s
    \right).
  \end{equation}
  The contribution of $f$ is negligible as well, because
  $s=o(s^{\nu+1/2}\log(1/s))$ for $0<\nu<1/2$.  Therefore
  \begin{equation*}
    \|Y_{\mathrm{lin}}(s)\|_2
    \geq
    \|\Pi_sY_{\mathrm{lin}}(s)\|_2 
    \geq
    \|\Pi_sY_{\mathrm{lead}}(s)\|_2
    -\|\Pi_sY_Q(s)\|_2
    -\|Y_f(s)\|_2
    \geq
    c_0s^{\nu+1/2}\log\frac1s
  \end{equation*}
  for all sufficiently small $s$, proving
  \eqref{eq:rate-linear-response-lower-bound}.
\end{proof}

\begin{proof}[Proof of
  Theorem~\ref{the:pure-trace-rate-sharpness}]
  Use the value $b_0=1/2+\nu/2$ fixed in the theorem 
  (any fixed $1/2<b_{0}<1/2+\nu$ would work equally well) and put
  $z=u-u_{\mathrm{ap}}$.
  Suppose, for a contradiction, that $z$ satisfies
  \eqref{eq:main-final-state-bound} for some
  $b\geq1/2+\nu$.

  By the fixed point relation \eqref{eq:fixed-point-map},
  we can split $z=z_{\mathrm{lin}}+z_{\mathrm{nl}}$ with
  \begin{equation*}
    z_{\mathrm{lin}}=
    -i\int_{t}^{\infty}
    e^{-i(t-s)H_\alpha}
    R(s)ds,
    \qquad
    z_{\mathrm{nl}}=
    i\int_{t}^{\infty}
    e^{-i(t-s)H_\alpha}
    \bigl[
      \lambda
      \bigl(
        N(u_{\mathrm{ap}}+z)
        -
        N(u_{\mathrm{ap}})
      \bigr)
    \bigr]\,ds.
  \end{equation*}
  Recall also that
  \begin{equation*}
    R(t)
    =
    -\frac1{4t^2}M(t)D(t)H_\alpha W(t).
  \end{equation*}
  The profile $W$ and the residual $R$ lie in the
  $m_\nu$-channel.  Since the linear flow, $M(t)$, and $D(t)$
  preserve angular channels, so do $z_{\mathrm{lin}}$ and the
  rescaled profile below.  Suppressing the common factor
  $e^{im_\nu\theta}$, the operator $H_\alpha$ reduces to
  $L_\nu$ and
  \begin{equation*}
    z_{\mathrm{lin}}(t)
    =
    -i\int_t^\infty
      e^{-i(t-\tau)L_\nu}R(\tau)\,d\tau,
    \qquad
    R(t)
    =
    -\frac1{4t^2}M(t)D(t)L_\nu W(t)
  \end{equation*}
  where we reuse the notation $R(t)$ for its radial component.
  The integral is understood in the retarded Strichartz sense.

  Put $\sigma=\sigma(t)=(4t)^{-1}$ and define
  \begin{equation*}
    Y(\sigma)
    =
    D(t)^{-1}M(t)^{-1}z_{\mathrm{lin}}(t),
    \qquad
    z_{\mathrm{lin}}(t)=M(t)D(t)Y(\sigma(t)).
  \end{equation*}
  Since $M(t)D(t)$ is unitary,
  \begin{equation*}
    \|z_{\mathrm{lin}}(t)\|_2=\|Y(\sigma(t))\|_2.
  \end{equation*}
  We now change the time variable in the integral
  for $z_{\mathrm{lin}}$.
  For a pole regularized source and a finite terminal time
  $S>t$, set $\eta=(4\tau)^{-1}$ 
  and use the channel pseudoconformal intertwining for
  $L_\nu$.  The mixed retarded estimate then allows us to remove
  the pole regularization and let $S\to\infty$, yielding
  \begin{equation}
    \label{eq:rate-response-limit}
    Y(\sigma)
    =
    \lim_{\delta\downarrow0}
    i\int_\delta^\sigma
    e^{i(\sigma-\eta)L_\nu}
    L_\nu W_\eta\,d\eta
    \quad\hbox{in }L^2.
  \end{equation}
  Here, with $a=\lambda/2$,
  \begin{equation*}
    W_\eta(r)
    =
    f(r)\exp\left(-ia|f(r)|\log\frac1{4\eta}\right).
  \end{equation*}
  The retarded Strichartz estimate gives a meaning to this formula 
  and makes its right hand side convergent in $L^2$.  Thus $Y$ is
  exactly the response appearing in
  Lemma~\ref{lem:rate-pure-trace-response} and we get
  \begin{equation}
    \label{eq:rate-physical-response-lower-bound}
    \|z_{\mathrm{lin}}(t)\|_2
    \geq
    c_1t^{-\nu-1/2}\log t
  \end{equation}
  for all sufficiently large $t$.

  We next estimate $z_{\mathrm{nl}}$.  The pointwise quadratic
  Lipschitz bound for $N(w)=|w|w$ gives
  \begin{equation*}
    |N(u_{\mathrm{ap}}+z)-N(u_{\mathrm{ap}})|
    \lesssim
    |u_{\mathrm{ap}}||z|+|z|^2.
  \end{equation*}
  The contradiction hypothesis is exactly the statement that
  $z\in X_{b,T}$.  Applying the retarded Strichartz estimate and
  then using
  \eqref{eq:linear-long-range-term} and
  \eqref{eq:quadratic-remainder-term} (with $|\lambda|$
  absorbed into the constant) yields
  \begin{equation}
    \label{eq:rate-nonlinear-response-upper-bound}
    \|z_{\mathrm{nl}}(t)\|_2
    \leq
    C
    \left(
      t^{-b}\|z\|_{X_{b,T}}
      +
      t^{1/2-2b}\|z\|_{X_{b,T}}^2
    \right).
  \end{equation}
  Since $b\geq1/2+\nu$, both terms are
  $o(t^{-\nu-1/2}\log t)$.  Combining
  \eqref{eq:rate-physical-response-lower-bound} and
  \eqref{eq:rate-nonlinear-response-upper-bound} gives
  \begin{equation*}
    \|z(t)\|_2
    \geq
    \|z_{\mathrm{lin}}(t)\|_2
    -\|z_{\mathrm{nl}}(t)\|_2
    \geq
    \frac{c_1}{2}
    t^{-\nu-1/2}\log t
  \end{equation*}
  for large $t$, contradicting
  \eqref{eq:main-final-state-bound}.

  Finally, suppose that another solution $v$ satisfied the
  forbidden bound.  Since $b\geq1/2+\nu>b_0$, it also belongs
  to the final state class with exponent $b_0$ on a common
  tail.  The uniqueness assertion in
  Theorem~\ref{the:main-final-state} then gives $v=u$, which
  contradicts the lower bound just proved.
\end{proof}

\subsection{The first correction}
\label{sec:second-order}

We next resolve the first term hidden in the error
\eqref{eq:intro-error}.  Cazenave--Naumkin~\cite{CN18}
derive higher asymptotic expansions for the flat critical
equation under a strong nonvanishing hypothesis on
oscillatory initial profiles; see also
Masaki--Miyazaki--Uriya~\cite{MMU19} for a candidate second
profile in a related final state problem, and
Jendrej--Salvi~\cite{JS26} for arbitrary order expansions in
a one dimensional polynomial problem with a cubic long range
component.  The quadratic map is not smooth enough at
transverse zeros for a direct high order Sobolev iteration.
We therefore control the profile modulus and phase
separately; the vanishing of the modulus of high enough order
allows to control the derivatives of the phase, which are
singular at the pole.

\begin{definition}[Second order core profile]
\label{def:second-order-core}
  A profile $\phi$ belongs to the \emph{second order core} if, on
  $\mathbb R^2\setminus\{0\}$,
  \begin{equation}
    \label{eq:second-core-factorization}
    \phi=\rho\omega,
  \end{equation}
  where $\rho\in C_c^\infty(\mathbb R^2)$ is nonnegative,
  \begin{equation}
    \label{eq:rho-sixth-order}
    \partial^\beta\rho(0)=0
    \qquad
    (|\beta|\leq5),
  \end{equation}
  and
  $\omega\in C^\infty(\mathbb R^2\setminus\{0\};\mathbb C)$
  satisfies $|\omega(x)|=1$ and
  \begin{equation}
    \label{eq:omega-symbol-bounds}
    |\partial^\beta\omega(x)|
    \leq
    C_\beta |x|^{-|\beta|}
    \qquad
    (|\beta|\leq6).
  \end{equation}
\end{definition}

This class meets the pole and is not restricted to one
channel.  For example, $\omega=e^{im\theta}$ is allowed, while
$\rho$ may depend on both $r$ and $\theta$.  
Every such profile belongs to $\mathcal Y$ and is
therefore admissible by Lemma~\ref{lem:concrete-profiles}.

Put
\begin{equation}
  \label{eq:second-order-a}
  a
  =
  \frac{\lambda}{2}\rho
\end{equation}
and define the real linear map
\begin{equation}
  \label{eq:second-order-linearization}
  \mathcal L_\phi h
  =
  a\omega\operatorname{Re}(\overline\omega h).
\end{equation}
Away from the pole, the right hand side is meaningful also
where $\rho=0$.  At the pole it is set equal to zero; the
coefficient extends there by
\eqref{eq:rho-sixth-order}--\eqref{eq:omega-symbol-bounds}.
Set
\begin{equation}
  \label{eq:second-order-B}
  B_\phi
  =
  -iI-\mathcal L_\phi.
\end{equation}
If $h=\omega(x+iy)$, then $B_{\phi}h=-(a+i)x\omega+y \omega$
which can be written
\begin{equation}
  \label{eq:B-real-matrix}
  B_\phi h
  =
  \omega
  \begin{pmatrix}
    -a&1\\
    -1&0
  \end{pmatrix}
  \begin{pmatrix}
    x\\
    y
  \end{pmatrix}.
\end{equation}
The determinant is one.  Thus $B_\phi$ is invertible
pointwise, and
\begin{equation}
  \label{eq:B-explicit-inverse}
  B_\phi^{-1}
  \bigl(\omega(u+iv)\bigr)
  =
  \omega
  \bigl(
    -v+i(u-av)
  \bigr).
\end{equation}

Define
\begin{equation}
  \label{eq:second-order-R}
  R_0
  =
  H_\alpha\phi,
  \qquad
  R_1
  =
  i\left(
    2\nabla a\cdot\nabla_\alpha\phi
    +
    (\Delta a)\phi
  \right),
  \qquad
  R_2
  =
  |\nabla a|^2\phi.
\end{equation}
The coefficients $q_2,q_1,q_0$ are fixed recursively by
\begin{equation}
  \label{eq:second-order-q-recursion}
    B_\phi q_2=
    \frac14R_2,
    \qquad
    B_\phi q_1=
    \frac14R_1-2iq_2,
    \qquad
    B_\phi q_0=
    \frac14R_0-iq_1.
\end{equation}
With $q(\sigma) = q_2\sigma^2+q_1\sigma+q_0,$
as in \eqref{eq:q-polynomial}, set
\begin{equation}
  \label{eq:second-order-profile}
  W_2(t)
  =
  e^{-ia\log t}
  \left(
    \phi+\frac1tq(\log t)
  \right).
\end{equation}
Then \eqref{eq:second-order-uap} is equivalently
$u_{\mathrm{ap}}^{(2)}(t)=M(t)D(t)W_2(t)$.

\begin{proposition}[Corrected residual]
\label{prop:second-order-residual}
  Let $\phi$ be a second order core profile.  Then
  \begin{equation*}
    W_2
    \in
    C([1,\infty);\dom(H_\alpha))
    \cap
    C^1([1,\infty);L^2),
  \end{equation*}
  where the first space has the graph topology.  Moreover,
  $u_{\mathrm{ap}}^{(2)}(t)=M(t)D(t)W_2(t)$ satisfies
  \begin{equation}
    \label{eq:second-order-residual}
    \left\|
      (i\partial_t-H_\alpha)u_{\mathrm{ap}}^{(2)}
      -
      \lambda
      |u_{\mathrm{ap}}^{(2)}|
      u_{\mathrm{ap}}^{(2)}
    \right\|_2
    \leq
    C_\phi t^{-3}(1+\log t)^4.
  \end{equation}
\end{proposition}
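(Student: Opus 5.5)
The plan is to combine the magnetic pseudoconformal identity of Lemma~\ref{lem:pseudoconformal} with the nonlinear scaling identity of Lemma~\ref{lem:nonlinear-scaling}. This reduces the residual to the profile level:
\begin{equation*}
  (i\partial_t-H_\alpha)u^{(2)}_{\mathrm{ap}}
  -\lambda|u^{(2)}_{\mathrm{ap}}|u^{(2)}_{\mathrm{ap}}
  =
  M(t)D(t)\Bigl(
    i\partial_tW_2-\frac{H_\alpha W_2}{4t^2}
    -\frac{\lambda}{2t}|W_2|W_2
  \Bigr).
\end{equation*}
Since $M(t)D(t)$ is unitary, it suffices to show that the bracket is $O_{L^2}(t^{-3}(1+\log t)^4)$.

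First I verify the regularity claim. Write $\sigma=\log t$. Then $e^{-ia\sigma}\phi=\rho\,\omega e^{-i\lambda\rho\sigma/2}$ is a smooth function of $\rho$ times $\omega$. The vanishing \eqref{eq:rho-sixth-order} gives $\rho=O(|x|^6)$, and the symbol bounds \eqref{eq:omega-symbol-bounds} control derivatives of $\omega$. Together these show that $e^{-ia\sigma}\phi$ and $e^{-ia\sigma}q_j$ lie in $\mathcal Y$, with norms polynomial in $\sigma$. For the $q_j$, I use the explicit inverse \eqref{eq:B-explicit-inverse}, whose coefficients are $\omega$, $a$, and $\overline\omega$. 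The sources $R_0,R_1,R_2$ involve at most two derivatives of $\phi$ and $a$. So $q_2\sim\rho|\nabla\rho|^2$ and $q_1$ is built from $\nabla\rho\cdot\nabla_\alpha\phi$ and $\Delta\rho\,\phi$. Each $q_j$ still vanishes like $|x|^{4}$ or better, and after two more derivatives with the weights $|x|^{-1},|x|^{-2}$ it stays in $L^2$; this is the role of the sixth order vanishing. The delicate term is $q_0$, which contains $H_\alpha\phi$. Its behaviour is $|x|^{4}$ times symbols, so $H_\alpha q_0$ requires about four derivatives of $\phi$ near the pole; these are available from $|\beta|\le 6$. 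Then Lemma~\ref{lem:full-minimal-domain} gives $W_2(t)\in\mathcal Y\subset\dom(H_\alpha)$, with graph continuity in $t$. The $C^1(L^2)$ property is immediate from the explicit formula.

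Next comes the algebraic expansion. Write $W_2=e^{-ia\sigma}(\phi+t^{-1}q)$. Then
\begin{equation*}
  i\partial_tW_2
  =
  e^{-ia\sigma}\Bigl[\frac{a}{t}\phi+\frac{a}{t^2}q
  -\frac{i}{t^2}q+\frac{i}{t^2}q'(\sigma)\Bigr].
\end{equation*}
For the nonlinearity, $|\phi+t^{-1}q|=\rho+t^{-1}\operatorname{Re}(\overline\omega q)+O(t^{-2}|q|^2/\rho)$. Hence
\begin{equation*}
  \frac{\lambda}{2t}|W_2|W_2
  =
  e^{-ia\sigma}\Bigl[\frac{a}{t}\phi+\frac{a}{t^2}q
  +\frac{1}{t^2}\mathcal L_\phi q\Bigr]+O(t^{-3}).
\end{equation*}
The order $t^{-1}$ terms cancel, which is the profile ODE \eqref{eq:W-ODE}. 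The terms $\frac{a}{t^2}q$ also cancel between the two expansions.

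For the magnetic term, I expand $H_\alpha(e^{-ia\sigma}\phi)=e^{-ia\sigma}(R_0-\sigma R_1-\sigma^2R_2)$; the signs need to be matched against \eqref{eq:second-order-R}. The order $t^{-2}$ part of the bracket is then
\begin{equation*}
  e^{-ia\sigma}t^{-2}\Bigl[B_\phi q+iq'(\sigma)-\tfrac14(R_0+\sigma R_1+\sigma^2R_2)\Bigr],
\end{equation*}
with the appropriate sign convention. Matching powers of $\sigma$ shows that this vanishes exactly when the recursion \eqref{eq:second-order-q-recursion} holds, since $q'=2q_2\sigma+q_1$.

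What remains is $t^{-3}H_\alpha(e^{-ia\sigma}q)/4$ together with the quadratic Taylor remainder of the modulus. The first is $O(t^{-3}\sigma^4)$ in $L^2$ by the preceding step: $q$ has degree 2 in $\sigma$, and $H_\alpha$ of the phase produces at most $\sigma^2$ more.

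The main obstacle I expect is this Taylor remainder of $z\mapsto|z|z$. Near zeros of $\rho$, the expansion of $|\phi+t^{-1}q|$ is not uniform, because $|q|^2/\rho$ may blow up. To handle it I would use the exact pointwise bound
\begin{equation*}
  \bigl||\phi+h|(\phi+h)-|\phi|\phi-D N(\phi)h\bigr|\lesssim|h|^2,
\end{equation*}
which holds without division by $\rho$, since $N$ has Lipschitz derivative. With $h=t^{-1}q$ and the prefactor $\lambda/(2t)$, this gives $O(t^{-3}\sigma^4)$ in $L^2$, provided $q_j\in L^4$; that follows from $q_j\in\mathcal Y\subset L^\infty\cap L^2$. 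Finally I must check that $DN(\phi)h$ equals $\lambda^{-1}(2a h+2\mathcal L_\phi h)$ where $\rho>0$. On $\{\rho=0\}$ the convention $\mathcal L_\phi=0$ matches $DN(0)=0$.
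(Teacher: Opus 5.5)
Your proposal is correct and is essentially the paper's proof. Both reduce the residual to the profile via the pseudoconformal and scaling identities, cancel the $t^{-2}$ terms pointwise through $B_\phi$ and the recursion, use the pole orders to get $e^{-ia\sigma}q_j\in\mathcal Y$ with polynomial growth in $\sigma=\log t$, and control the quadratic remainder by the global Lipschitz bound for $DN$ so that nothing is divided by $\rho$; the paper only phrases the algebra in the variable $\sigma$, with $W_2=W+e^{-\sigma}Y$.

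The one slip is your hedged sign. The product rule $H_\alpha(f\phi)=fH_\alpha\phi-2\nabla f\cdot\nabla_\alpha\phi-(\Delta f)\phi$ with $f=e^{-ia\sigma}$ gives $H_\alpha(e^{-ia\sigma}\phi)=e^{-ia\sigma}(R_0+\sigma R_1+\sigma^2R_2)$ with plus signs. That is exactly what makes the coefficients of $\sigma^2$, $\sigma$ and $1$ match \eqref{eq:second-order-q-recursion}.
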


\begin{proof}
  Write $\sigma=\log t$ and
  \begin{equation}
    \label{eq:F-lambda}
    F_\lambda(z)
    =
    \frac{\lambda}{2}|z|z.
  \end{equation}
  We first perform the algebra on the punctured plane.  The
  graph and time regularity needed for the pseudoconformal
  identity are verified below.
  The pseudoconformal identity and nonlinear scaling give
  as before, for $V=V(\sigma,x)$,
  \begin{equation}
    \label{eq:log-profile-equation}
    (i\partial_t-H_\alpha)M(t)D(t)V
    -
    \lambda|M(t)D(t)V|M(t)D(t)V
    =
    \frac1tM(t)D(t)
    \left[
      i\partial_\sigma V
      -
      F_\lambda(V)
      -
      \frac{e^{-\sigma}}4H_\alpha V
    \right].
  \end{equation}
  The leading profile $W=e^{-ia\sigma}\phi$ solves
  $i\partial_\sigma W=F_\lambda(W)$ and leaves a remainder
  of order $t^{-2}$ at the RHS of \eqref{eq:log-profile-equation}.
  To remove this error, consider a corrected profile
  \begin{equation*}
    W_2=W+e^{-\sigma}Y,
    \qquad
    W=e^{-ia\sigma}\phi,
    \qquad
    Y(\sigma)=e^{-ia\sigma}q(\sigma).
  \end{equation*}
  Denote by $\mathcal E[V]$ the quantity in square brackets
  in \eqref{eq:log-profile-equation}.
  If we plug $V=W_{2}$ inside $\mathcal E[V]$ we obtain
  \begin{equation*}
    \mathcal E[W_{2}]
    =
    e^{-\sigma}
    \left[
      iY'-iY
      -
      DF_\lambda(W)Y
      -
      \frac14H_\alpha W
    \right]
    -
    e^{-2\sigma}
    \left[
      \mathcal Q_\lambda(\sigma)
      +
      \frac14H_\alpha Y
    \right]
  \end{equation*}
  where
  \begin{equation*}
    \mathcal Q_\lambda(\sigma)
    :=
    e^{2\sigma}
    \left[
      F_\lambda(W+e^{-\sigma}Y)
      -
      F_\lambda(W)
      -
      e^{-\sigma}DF_\lambda(W)Y
    \right].
  \end{equation*}

  The first term in $\mathcal{E}[W_{2}]$
  cancels exactly with our choice of $q(\sigma)$.
  Indeed, by gauge covariance
  \begin{equation*}
    DF_\lambda(W)Y
    =
    e^{-ia\sigma}DF_\lambda(\phi)q
    =
    e^{-ia\sigma}
    \bigl(aq+\mathcal L_\phi q\bigr),
  \end{equation*}
  while
  \begin{equation*}
    iY'-iY
    =
    e^{-ia\sigma}
    \bigl(iq'+aq-iq\bigr).
  \end{equation*}
  Since $B_\phi=-iI-\mathcal L_\phi$, we obtain
  \begin{equation*}
    iY'-iY-DF_\lambda(W)Y
    =
    e^{-ia\sigma}
    \bigl(iq'+B_\phi q\bigr).
  \end{equation*}
  Moreover, with $R_{j}$ as in \eqref{eq:second-order-R},
  \begin{equation*}
    H_\alpha W
    =
    e^{-ia\sigma}
    \bigl(R_0+\sigma R_1+\sigma^2R_2\bigr).
  \end{equation*}
  Summing up
  \begin{equation*}
    \mathcal E[W_{2}]
    =
    e^{-\sigma}e^{-ia\sigma}
    \left[
      iq'+B_\phi q
      -
      \frac14
      \bigl(R_0+\sigma R_1+\sigma^2R_2\bigr)
    \right]
    -
    e^{-2\sigma}
    \left[
      \mathcal Q_\lambda(\sigma)
      +
      \frac14H_\alpha Y
    \right].
  \end{equation*}
  By conditions \eqref{eq:second-order-q-recursion}, the first
  term is identically 0, and only the second term survives.
  Since $t^{-1}=e^{-\sigma}$, the physical residual is
  \begin{equation}\label{eq:phys-resid}
      (i\partial_t-H_\alpha)u_{\mathrm{ap}}^{(2)}
      -
      \lambda
      |u_{\mathrm{ap}}^{(2)}|
      u_{\mathrm{ap}}^{(2)}
      =
      -
      t^{-3}M(t)D(t)
      \left[
        \mathcal Q_\lambda(\log t)
        +
        \frac14H_\alpha Y(\log t)
      \right].
  \end{equation}

  To justify these computations, 
  we check the regularity at the pole.  From
  \eqref{eq:rho-sixth-order} and
  \eqref{eq:omega-symbol-bounds},
  \begin{equation}
    \label{eq:phi-pole-orders}
    |\partial^\beta\phi(x)|
    \leq
    C_\beta |x|^{6-|\beta|}
    \qquad
    (|\beta|\leq6,\ |x|\leq1).
  \end{equation}
  The pole orders can be read directly from the formulas.
  For $|\beta|\leq2$, the terms $R_0$, $R_1$, and $R_2$ have
  orders $4-|\beta|$, $10-|\beta|$, and
  $16-|\beta|$, respectively.  Differentiating the explicit
  inverse in \eqref{eq:B-explicit-inverse} costs at most one
  power of $|x|$ for each derivative of $\omega$.  The
  recursion therefore gives $q_2$ of order $16$, $q_1$ of
  order $10$, and $q_0$ of order $4$.  In particular,
  \begin{equation}
    \label{eq:q-pole-orders}
    |\partial^\beta q_j(x)|
    \leq
    C_\beta |x|^{4-|\beta|}
    \qquad
    (|\beta|\leq2,\ |x|\leq1).
  \end{equation}
  All these functions are compactly supported.  The estimates
  imply, by the same cutoff argument used for
  Lemma~\ref{lem:concrete-profiles}, that
  $e^{-ia\sigma}q(\sigma)\in\mathcal Y$.  They also give
  \begin{equation}
    \label{eq:q-growth-bounds}
    \|q(\sigma)\|_\infty
    +
    \|q(\sigma)\|_4
    \leq
    C_\phi(1+\sigma)^2
  \end{equation}
  and
  \begin{equation}
    \label{eq:Hq-growth-bound}
    \left\|
      H_\alpha
      \bigl(e^{-ia\sigma}q(\sigma)\bigr)
    \right\|_2
    \leq
    C_\phi(1+\sigma)^4.
  \end{equation}
  Hence $W_2(t)\in\mathcal Y\subset\dom(H_\alpha)$.
  The same estimates applied to differences in $\sigma$ give
  graph continuity.  If $\sigma=\log t$, direct
  differentiation gives
  \begin{equation*}
    \partial_tW_2(t)
    =
    \frac1t e^{-ia\sigma}
    \left[
      -ia\phi
      +
      \frac1t
      \bigl(q'(\sigma)-(1+ia)q(\sigma)\bigr)
    \right].
  \end{equation*}
  The right hand side is continuous in $L^2$.  This proves the
  asserted time regularity and justifies the pseudoconformal
  identity used above.

  We finally estimate the surviving error. 
  Since $D F_\lambda$ is globally Lipschitz,
  \begin{equation}
    \label{eq:F-second-difference}
    |F_\lambda(z+h)-F_\lambda(z)
      -D F_\lambda(z)h|
    \leq
    C|\lambda||h|^2.
  \end{equation}
  By \eqref{eq:F-second-difference},
  \eqref{eq:q-growth-bounds}, and
  \eqref{eq:Hq-growth-bound}, the bracket in 
  \eqref{eq:phys-resid} has $L^2$ norm at
  most $C_\phi(1+\log t)^4$.  Unitarity of $M(t)D(t)$ proves
  \eqref{eq:second-order-residual}.
\end{proof}

\begin{proof}[Proof of Theorem~\ref{the:second-order}]
  Repeat the final state contraction with
  $u_{\mathrm{ap}}^{(2)}$ in place of
  $u_{\mathrm{ap}}$.  The weighted quadratic estimate only
  requires $b>1/2$.  From \eqref{eq:q-growth-bounds},
  \begin{equation}
    \label{eq:second-uap-infinity}
    \|u_{\mathrm{ap}}^{(2)}(t)\|_\infty
    \leq
    \frac{\|\rho\|_\infty}{2t}
    +
    C_\phi t^{-2}(1+\log t)^2.
  \end{equation}
  The first term is absorbed by smallness of
  $\|\rho\|_\infty$.  The second contributes a coefficient
  $\eta_T$ to the contraction estimate, where
  \begin{equation}
    \label{eq:eta-T-bound}
    \eta_T
    \leq
    C_{\phi,b,\lambda}
    T^{-1}(1+\log T)^2.
  \end{equation}
  If $\mathcal R_2$ denotes the residual in
  \eqref{eq:phys-resid}, then
  \begin{equation}
    \label{eq:second-residual-tail}
    \sup_{\tau\geq T}
    \tau^b
    \|\mathcal R_2\|_{
      L^1([\tau,\infty);L^2)}
    \leq
    C_{\phi,b}
    T^{b-2}(1+\log T)^4.
  \end{equation}
  This tends to zero because $b<2$.  Thus the analogue of
  \eqref{eq:fixed-point-bound} is
  \begin{equation}
    \label{eq:second-fixed-point-bound}
    \|\Phi_2(z)\|_{X_{b,T}}
    \leq
    C_\alpha
    \biggl[
      \left(
        \frac{|\lambda|\|\rho\|_\infty}{b}
        +
        \eta_T
      \right)
      \|z\|_{X_{b,T}}
      {}+
      |\lambda|T^{1/2-b}
      \|z\|_{X_{b,T}}^2
      +
      C_{\phi,b}
      T^{b-2}(1+\log T)^4
    \biggr].
  \end{equation}
  The difference estimate has the same linear coefficient.
  At this point, the proof proceeds exactly as for
  Theorem \ref{the:all-channel}, and we omit it.
\end{proof}

\begin{remark}[Relation with the first order solution]
\label{rem:second-first-identification}
  Fix once and for all $b'\in(1/2,1)$ and choose the
  smallness threshold in the previous proof no larger than
  $\varepsilon_{b',\alpha,\lambda}$ from
  Theorem~\ref{the:all-channel}.  Since $b>1>b'$ and
  $u_{\mathrm{ap}}^{(2)}-u_{\mathrm{ap}}\in X_{b',T}$,
  the solution constructed here belongs to the uniqueness class
  of Theorem~\ref{the:all-channel}.
  We conclude that
  Theorem~\ref{the:second-order} refines the same modified wave
  operator and does not construct a new solution.
\end{remark}

\begin{remark}[Cubic perturbation]
\label{rem:second-order-cubic}
  For a more general NLS with the additional term
  $\kappa|u|^2u$, the logarithmic profile is unchanged.
  In \eqref{eq:log-profile-equation}, the order
  $e^{-\sigma}$ forcing becomes
  \begin{equation}
    \label{eq:cubic-second-forcing}
    \frac{e^{-\sigma}}4
    \left(
      H_\alpha V+\kappa|V|^2V
    \right).
  \end{equation}
  Thus the same recursion applies after replacing
  $R_0$ by
  \begin{equation}
    \label{eq:cubic-R0}
    R_0+\kappa|\phi|^2\phi.
  \end{equation}
  The forward final state argument is unchanged.  A global
  statement for the mixed equation additionally depends on
  the sign and size assumptions in its cubic Cauchy theory.
\end{remark}

\appendix

\section{Single channel factorization}
\label{sec:one-channel}

We give a direct channel computation that fixes the
normalization of the distorted transform.
We also check the channel invariance behind
Corollary~\ref{cor:natural-channel-final-states}.

\subsection{Direct Hankel factorization}
\label{sec:channel-factorization}

\begin{lemma}[One channel factorization]
\label{lem:channel-factorization}
  Let $\mu\geq0$, $t>0$, and
  $U_\mu(t)=e^{-itL_\mu}$.  Then
  \begin{equation}
    \label{eq:channel-factorization}
    U_\mu(t)
    =
    e^{-i\pi\mu/2}
    M(t)D(t)\mathcal H_\mu M(t),
  \end{equation}
  where $M(t)$ and $D(t)$ act on the radial variable as in
  \eqref{eq:MD-definition-intro}.
\end{lemma}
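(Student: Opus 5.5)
I will prove \eqref{eq:channel-factorization} from the explicit kernel of $U_\mu(t)$ obtained through the Hankel transform. The plan is to diagonalize $L_\mu$ by \eqref{eq:Hankel-diagonalization}, so that $U_\mu(t)=\mathcal H_\mu e^{-it\rho^2}\mathcal H_\mu$ (recall $\mathcal H_\mu$ is selfadjoint and unitary, hence an involution). For $f\in C_c^\infty((0,\infty))$ this gives the kernel
\begin{equation*}
  U_\mu(t)(r,s)
  =
  \int_0^\infty J_\mu(r\rho)J_\mu(s\rho)e^{-it\rho^2}\rho\,d\rho .
\end{equation*}
I then evaluate this integral with Weber's second exponential integral,
\begin{equation*}
  \int_0^\infty J_\mu(a\rho)J_\mu(b\rho)e^{-p\rho^2}\rho\,d\rho
  =
  \frac{1}{2p}e^{-(a^2+b^2)/(4p)}I_\mu\!\left(\frac{ab}{2p}\right),
  \qquad \operatorname{Re}p>0.
\end{equation*}
To reach $p=it$, I first use $p=\varepsilon+it$ and let $\varepsilon\downarrow0$. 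In the limit I use $I_\mu(-iz)=e^{-i\pi\mu/2}J_\mu(z)$ for $z>0$, which holds with the principal branch since $\arg(-iz)=-\pi/2$.

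With $p=it$, substituting gives
\begin{equation*}
  U_\mu(t)(r,s)
  =
  \frac{1}{2it}e^{i(r^2+s^2)/(4t)}e^{-i\pi\mu/2}J_\mu\!\left(\frac{rs}{2t}\right).
\end{equation*}
Reading this against the right-hand side of \eqref{eq:channel-factorization} proceeds from the inside out. The inner $M(t)$ supplies $e^{is^2/(4t)}$. Next, $\mathcal H_\mu$ evaluated at $\rho=r/(2t)$ supplies $J_\mu(rs/(2t))$ integrated against $s\,ds$. The dilation $D(t)$ supplies $1/(2it)$ and the evaluation at $r/(2t)$, and the outer $M(t)$ supplies $e^{ir^2/(4t)}$. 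These match the kernel exactly. Finally, density of $C_c^\infty$ together with unitarity of both sides extends the identity to all of $L^2(r\,dr)$.

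The main obstacle is justifying the limit $\varepsilon\downarrow0$, where the integral is only conditionally convergent. I would handle it weakly. For test functions $f,g$, the regularized pairing equals $\langle e^{-\varepsilon L_\mu}e^{-itL_\mu}f,g\rangle$, which converges to the unregularized pairing by the spectral theorem. On the kernel side, the regularized kernel converges locally uniformly on compact subsets of $(0,\infty)^2$, by continuity of $I_\mu$ and of the Gaussian factor in $p$. Together these identify the limiting kernel on test functions, and the branch bookkeeping for $I_\mu(-iz)$ fixes the phase $e^{-i\pi\mu/2}$.
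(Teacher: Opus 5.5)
Your proposal is correct and follows essentially the same route as the paper: you write $U_\mu(t)$ through the Hankel spectral representation, evaluate the kernel with Weber's second exponential integral at $p=\varepsilon+it$ and let $\varepsilon\downarrow0$, then read off $M(t)D(t)\mathcal H_\mu M(t)$ from the resulting kernel. Your weak-pairing justification of the limit, the branch check $I_\mu(-iz)=e^{-i\pi\mu/2}J_\mu(z)$, and the density step simply make explicit what the paper compresses into ``first with a positive damping and then by continuation.''
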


\begin{proof}
  The spectral representation of $L_\mu$ gives
  \begin{equation}
    \label{eq:spectral-double-integral}
    U_\mu(t)f(r)
    =
    \int_0^\infty
    \int_0^\infty
    e^{-it\rho^2}
    J_\mu(r\rho)J_\mu(s\rho)
    f(s)s\,ds\,\rho\,d\rho.
  \end{equation}
  Weber's second exponential integral, first with a positive
  damping and then by continuation, gives
  \begin{equation}
    \label{eq:Weber}
    \int_0^\infty
    e^{-it\rho^2}
    J_\mu(r\rho)J_\mu(s\rho)\rho\,d\rho
    =
    \frac{e^{-i\pi\mu/2}}{2it}
    \exp\left(
      \frac{i(r^2+s^2)}{4t}
    \right)
    J_\mu\left(\frac{rs}{2t}\right).
  \end{equation}
  Substitution in \eqref{eq:spectral-double-integral}
  yields \eqref{eq:channel-factorization}:
  \begin{equation}
    \label{eq:Weber-factorized}
    U_\mu(t)f(r)
    =
    e^{-i\pi\mu/2}
    \frac{e^{ir^2/(4t)}}{2it}
    \left[
      \mathcal H_\mu\bigl(M(t)f\bigr)
    \right]\left(\frac r{2t}\right).
    \qedhere
  \end{equation}
\end{proof}

\subsection{Channel invariance}

The gauge invariant nonlinearity preserves each channel:
\begin{equation}
  \label{eq:channel-nonlinear}
  |e^{im\theta}v|e^{im\theta}v
  =
  e^{im\theta}|v|v.
\end{equation}
Thus \eqref{eq:main-NLS} on the subspace
$e^{im\theta}L^2(r\,dr)$ reduces to
\begin{equation}
  \label{eq:radial-NLS}
  i\partial_tv
  =
  L_{\mu_m}v+\lambda|v|v.
\end{equation}
Every operation in the fixed point map
\eqref{eq:fixed-point-map} preserves the subspace
$e^{im\theta}L^2(r\,dr)$.  The linear flow does so by
\eqref{eq:channel-linear}, the approximate state belongs to
the same channel, and \eqref{eq:channel-nonlinear} treats the
nonlinearity.  Thus the solution in
Corollary~\ref{cor:natural-channel-final-states} remains in its
prescribed channel for all times.
Lemma~\ref{lem:channel-factorization} gives the direct
radial normalization of the asymptotic formula.

\begingroup
\sloppy
\bibliographystyle{amsplain}
\bibliography{quadrNLS-AB-modWO}
\endgroup
\end{document}